\numberwithin{equation}{section}
\newtheorem{thm}{Theorem}[section]
\newtheorem{lma}[thm]{Lemma}
\newtheorem{cor}[thm]{Corollary}
\newtheorem{defn}[thm]{Definition}
\newtheorem{prop}[thm]{Proposition}
\newtheorem{rem}[thm]{Remark}
\newtheorem{ques}[thm]{Question}
\newtheorem{example}[thm]{Example}
\newcommand{\ii}{\mathbf{i}}
\newcommand{\jj}{\mathbf{j}}
\renewcommand{\epsilon}{\varepsilon}
\renewcommand{\geq}{\geqslant}
\renewcommand{\leq}{\leqslant}
\DeclareMathOperator*{\argmin}{arg\,min}
\DeclareMathOperator*{\argmax}{arg\,max}
\definecolor{zzttqq}{rgb}{0.6,0.2,0}
\begin{document}

\title[The Assouad dimension of self-affine measures on sponges]{The Assouad dimension of self-affine measures on sponges}
\author{Jonathan M. Fraser and Istv\'an Kolossv\'ary}
\address{University of St Andrews, School of Mathematics and Statistics,
	 \newline  St Andrews, KY16 9SS, Scotland}
\email{ jmf32@st-andrews.ac.uk and  itk1@st-andrews.ac.uk}

\begin{abstract}

We derive upper and lower bounds for the Assouad and lower dimensions of self-affine measures in $\mathbb{R}^d$ generated by diagonal matrices and satisfying suitable separation conditions. The upper and lower bounds always coincide for $d=2,3$ yielding precise explicit formulae for the dimensions.  Moreover, there are easy to check conditions guaranteeing that the bounds coincide for $d \geq 4$.

An interesting consequence of our results is that there can be  a `dimension gap' for such self-affine constructions, even in the plane.  That is, we show that for some self-affine carpets of `Bara\'nski type' the Assouad dimension of all associated self-affine measures strictly exceeds the Assouad dimension of the carpet by some fixed $\delta>0$ depending only on the carpet.  We also provide examples of self-affine carpets of `Bara\'nski type' where there is no dimension gap and in fact the Assouad dimension of the carpet is equal to the Assouad dimension of a carefully chosen self-affine measure.\\

\textit{Mathematics Subject Classification} 2020:     \quad primary: 28A80; \quad secondary: 37D20, 37C45.

\textit{Key words and phrases}: Assouad dimension, lower dimension, self-affine carpet, self-affine sponge, dimension gap.
\end{abstract}

\maketitle

\section{Introduction: dimensions of self-affine measures}

Let $\nu$ be a compactly supported Borel probability measure in $\mathbb{R}^d$.  The Assouad and lower dimensions of $\nu$ quantify the extremal local fluctuations of the measure by considering the relative measure of concentric balls. In particular, a measure is doubling if and only if it has finite Assouad dimension, e.g. \cite[Lemma 4.1.1]{fraser_2020Book}. Write   $\mathrm{supp}(\nu)$ to denote the support of $\nu$ and $|F|$ to denote  the diameter of a non-empty set $F$. The \emph{Assouad dimension} of $\nu$ is defined by
\begin{multline*}
\dim_{\mathrm{A}} \nu=\inf \bigg\{s \geq 0: \text{ there exists } C>0 \text{ such that, for all }  x \in \mathrm{supp}(\nu) \\
\text{and for all } 0<r<R<|\mathrm{supp}(\nu)|,\;  
\frac{\nu(B(x, R))}{\nu(B(x, r))} \leq C\left(\frac{R}{r}\right)^{s}\bigg\},
\end{multline*}
and, provided $|\mathrm{supp}(\nu)|>0$, the \emph{lower dimension} of $\nu$ is
\begin{multline*}
\dim_{\mathrm{L}} \nu=\sup \bigg\{s \geq 0: \text{ there exists } C>0 \text{ such that, for all } x \in \mathrm{supp}(\nu) \\
\text{and for all } 0<r<R<|\mathrm{supp}(\nu)|,\;  
\frac{\nu(B(x, R))}{\nu(B(x, r))} \geq C\left(\frac{R}{r}\right)^{s}\bigg\}.
\end{multline*}
If $|\mathrm{supp}(\nu)|=0$, then $\dim_{\mathrm{L}} \nu=0$.  The Assouad and lower dimensions of measures were introduced by K\"aenm\"aki,  Lehrb\"ack  and Vuorinen~\cite{KaenmakiLehrbackVuorinen_Indiana13}, where they were originally referred to as the upper and lower regularity dimensions, respectively.  We are interested in the Assouad and lower dimensions of self-affine measures.  

Given a finite index set $\mathcal{I}=\{1,\ldots,N\}$, an affine \emph{iterated function system} (IFS) on $\mathbb{R}^d$ is a finite family $\mathcal{F}=\{f_i\}_{i\in\mathcal{I}}$ of affine contracting maps $f_i(x)=A_i x+t_i$. The IFS determines a unique, non-empty compact set $F$, called the \emph{attractor}, that satisfies the relation
\begin{equation*}
	F = \bigcup_{i\in\mathcal{I}} f_i(F).
\end{equation*}
Given a probability vector $\mathbf{p}=(p(i))_{i\in\mathcal{I}}$ with strictly positive entries, the \emph{self-affine measure} $\nu_{\mathbf{p}}$ fully supported on $F$ is the unique Borel probability measure 
\begin{equation*}
	\nu_{\mathbf{p}} = \sum_{i\in\mathcal{I}} p(i)\, \nu_{\mathbf{p}}\circ f_i^{-1}.
\end{equation*}
The measure $\nu_{\mathbf{p}} $  has an equivalent characterisation as the push-forward of the Bernoulli measure generated by $\mathbf{p}$  under the natural projection from the symbolic space to the attractor. More precisely, given $\mathbf{p}$, the Bernoulli measure on the symbolic space $\Sigma=\mathcal{I}^{\mathbb{N}}$ is the product measure $\mu_{\mathbf{p}}=\mathbf{p}^{\mathbb{N}}$. The natural projection $\pi:\,\Sigma\to F$ is given by
\begin{equation}\label{eq:14}
	\pi(\ii) =\pi(i_1i_2\ldots i_k \ldots) \coloneqq \lim_{k\to\infty} f_{i_1i_2\ldots i_k}(0),
\end{equation} 
where $f_{i_1i_2\ldots i_k}=f_{i_1}\circ f_{i_2} \circ\ldots\circ f_{i_k}$. Then $\nu_{\mathbf{p}} = \mu_{\mathbf{p}}\circ \pi^{-1}$.

Computing (or estimating) the dimensions of self-affine measures in general is a hard problem.  Moreover, many self-affine measures fail to be doubling (and so have infinite Assouad dimension) and so some conditions are needed in order to obtain sensible results. The specific self-affine measures we are able to handle are those supported on `Bara\'nski type sponges'.  That is, the $A_i$ are diagonal matrices and we assume a  separation condition (the `very strong SPPC', see Definition \ref{def:SPPC}) which, roughly speaking, says that all relevant projections of the measure satisfy the, more familiar, strong separation condition (SSC). For such measures we derive upper and lower bounds for the Assouad and lower dimensions, see Theorem \ref{thm:dimMeasure}. Moreover, the upper and lower bounds agree when $d=2,3$ (see Lemma~\ref{lem:B=Aford23}) and also in many other cases in higher dimensions. It remains an interesting open problem whether our bounds are sharp in full generality, see Question~\ref{ques:1}.   One of the main technical challenges in considering `Bara\'nski type sponges' instead of, for example, those of `Bedford--McMullen' or `Lalley--Gatzouras' type is that we have to control the ratio of the measure of approximate cubes with `different orderings'. As such we develop a number of technical tools which may have further application, e.g. the subdivision argument used in proving Proposition \ref{prop:ratioOfMeasures}.  An interesting consequence of our results is that there can be  a `dimension gap' for such self-affine constructions, even in the plane, see Corollary \ref{cor:dimgap} and Proposition \ref{prop:ex1}.

\section{Main results:  dimension bounds and dimension gaps}\label{sec:spongesresult}

\subsection{Our model and assumptions}\label{sec:SPPC}

We call a self-affine set $F$ a \emph{(self-affine) sponge} if the linear part $A_i$ of each $f_i$ is a diagonal matrix with entries $\big(\lambda_i^{(1)},\ldots,\lambda_i^{(d)}\big)$. When $d=2$ sponges are more commonly referred to as  \emph{self-affine carpets} and when $d=1$ they are self-similar sets. The original model for a self-affine carpet was  introduced independently by Bedford~\cite{Bedford84_phd} and  McMullen~\cite{mcmullen84} and  later generalised by Lalley and Gatzouras~\cite{GatzourasLalley92}, Bara\'nski~\cite{BARANSKIcarpet_2007} and many others.  The dimension theory of self-affine carpets is well-developed, although several interesting questions remain such as the question of whether  self-affine carpets necessarily support  an invariant measure of maximal Hausdorff dimension, see \cite{peressolomyaksurvey}.  A recent breakthrough established that this was false for sponges with $d=3$~\cite{das2017hausdorff}, that is,  the existence of a `dimension gap' was established for certain examples.  This dimension gap result resolved a long standing open problem in dynamical systems.

Generally, much less is known about sponges in dimensions $d\geq 3$. The objective of this paper is to contribute to this line of research. A number of results concern the higher dimensional Bedford--McMullen sponges, see Example~\ref{ex:BM} for the formal definition. Their Hausdorff and box dimensions were determined by Kenyon and Peres~\cite{KenyonPeres_ETDS96} while their Assouad and lower dimensions were calculated by Fraser and Howroyd~\cite{FraserHowroyd_AnnAcadSciFennMath17}. Olsen~\cite{Olsen_PJM98} studied multifractal properties of self-affine measures supported by these sponges and  Fraser and Howroyd~\cite{FraserHowroyd_Indiana20} derived a formula for the Assouad dimension of such measures. The lower and Assouad dimensions of Lalley--Gatzouras sponges, see Example~\ref{ex:LG}, are also known~\cite{das_fishman_simmons_urbanski_2019ETDS, Howroyd_JFG19}.

Without loss of generality we assume that $f_i([0,1]^d)\subset [0,1]^d$ and that there is no $i\neq j$ such that $f_i(x)=f_j(x)$ for every $x\in[0,1]^d$. To avoid unwanted complication with notation, we also assume that
\begin{equation*}
\lambda_i^{(n)}\in(0,1)\; \text{ for every } i\in\mathcal{I} \text{ and } 1\leq n\leq d.
\end{equation*}
We make one further simplification by assuming that all pairs of coordinates are \emph{distinguishable}, i.e. 
\begin{equation}\label{eq:00}
\text{for any } m\neq n\in\{1,\ldots,d\} \text{ there exists } i\in\mathcal{I} \text{ such that } \lambda_i^{(n)} \neq \lambda_i^{(m)}.
\end{equation}
Otherwise, the sponge is not `genuinely self-affine' in all coordinates. The case when not all pairs of coordinates are distinguishable can be handled by `gluing' together non-distinguishable  coordinates as was done by Howroyd~\cite{Howroyd_JFG19} but we omit further discussion of such examples.

The orthogonal projections of $F$ onto the principal $n$-dimensional subspaces play a vital role in the arguments. Let $\mathcal{S}_d$ be the symmetric group on the set $\{1,\ldots,d\}$. For a permutation $\sigma=(\sigma_1,\ldots,\sigma_d)\in\mathcal{S}_d$ of the coordinates, let $E_n^{\sigma}$ denote the $n$-dimensional subspace spanned by the coordinate axes indexed by $\sigma_1,\ldots,\sigma_n$.  Notice that $E_n^{\sigma}=E_n^{\omega}$ as long as $\{\sigma_1,\ldots,\sigma_n\}$ and $\{\omega_1,\ldots,\omega_n\}$ are the same sets. The permutation appears in the notation rather than just the set of indices because the ordering of coordinates will play a role in how the subspace is `built up' from its lower dimensional subspaces. Let $\Pi_n^{\sigma}:[0,1]^d\to E_n^\sigma$ be the orthogonal projection onto $E_n^{\sigma}$. For $n=d$, $\Pi_d^{\sigma}$ is simply the identity map. We say that $f_i$ and $f_j$ \emph{overlap exactly} on $E_n^{\sigma}$ if
\begin{equation*}
	\Pi_n^{\sigma}(f_i(x))=\Pi_n^{\sigma}(f_j(x)) \;\text{ for every }\; x\in[0,1]^d.
\end{equation*}
Observe that if $f_i$ and $f_j$ overlap exactly on $E_n^{\sigma}$ then they also overlap exactly on $E_m^{\sigma}$ for all $1\leq m \leq n$ but may not overlap exactly on any $E_n^{\sigma'}$ for some other $\sigma'\in\mathcal{S}_d$.

Recall $\Sigma= \mathcal{I}^{\mathbb{N}}$ is the space of all one-sided infinite words $\ii=i_1,i_2,\ldots$. Slightly abusing notation, we also write $\ii=i_1,\ldots,i_k\in\mathcal{I}^k$ for a finite length word or $\ii|k=i_1,\ldots,i_k$ for the truncation of $\ii\in\Sigma$. For $r>0$, the \emph{$r$-stopping of $\ii\in\Sigma$ in the $n$-th coordinate} (for $n=1,\ldots,d$) is the unique integer $L_{\ii}(r,n)$ for which
\begin{equation}\label{eq:21}
	\prod_{\ell=1}^{L_{\ii}(r,n)} \lambda_{i_\ell}^{(n)} \leq r < \prod_{\ell=1}^{L_{\ii}(r,n)-1}\lambda_{i_\ell}^{(n)}.
\end{equation}

We distinguish between two different kinds of orderings. We say that $\ii\in\Sigma$ determines a \emph{$\sigma$-ordered cylinder at scale $r$} if  $\sigma_d=\sigma_d(\ii,r)$ is the largest index that satisfies
\begin{equation*}
L_{\ii}(r,\sigma_d)=\min_{n\in\{1,\ldots,d\}} L_{\ii}(r,n) \;\text{ and }\;  \prod_{\ell=1}^{L_{\ii}(r,\sigma_d)} \lambda_{i_{\ell}}^{(\sigma_d)} = \min_{n\in\{1,\ldots,d\}} \prod_{\ell=1}^{L_{\ii}(r,\sigma_d)} \lambda_{i_{\ell}}^{(n)},
\end{equation*}
and then
\begin{equation}\label{eq:201}
	\prod_{\ell=1}^{L_{\ii}(r,\sigma_d)} \lambda_{i_{\ell}}^{(\sigma_d)} \leq \prod_{\ell=1}^{L_{\ii}(r,\sigma_d)} \lambda_{i_{\ell}}^{(\sigma_{d-1})} \leq \ldots \leq \prod_{\ell=1}^{L_{\ii}(r,\sigma_d)} \lambda_{i_{\ell}}^{(\sigma_1)},
\end{equation}
where to make the ordering unique, we use the convention that
\begin{equation*}
\text{ if }\; \prod_{\ell=1}^{L_{\ii}(r,\sigma_d)} \lambda_{i_{\ell}}^{(\sigma_n)} = \prod_{\ell=1}^{L_{\ii}(r,\sigma_{d})} \lambda_{i_{\ell}}^{(\sigma_{n-1})} \;\text{ then } \sigma_n>\sigma_{n-1}.
\end{equation*}
It is a \emph{strictly} $\sigma$-ordered cylinder if all inequalities in~\eqref{eq:201} are strict.  This corresponds to the ordering of the length of the sides of the cylinder set $f_{\ii|L_{\ii}(r,\sigma_d)}([0,1]^d)$  with $\sigma_d$ corresponding to the shortest side and $\sigma_1$ the longest.  Moreover, we say that $\ii\in\Sigma$ determines a \emph{$\sigma$-ordered cube at scale $r$} if 
\begin{equation}\label{eq:202}
	L_{\ii}(r,\sigma_d)\leq L_{\ii}(r,\sigma_{d-1})\leq \ldots\leq L_{\ii}(r,\sigma_1).
\end{equation}
Here the ordering is made unique with the following rule: if coordinates $k<m$ satisfy $L_{\ii}(r,k)=L_{\ii}(r,m)$, then $k$ precedes $m$ in $\sigma$ if and only if $\prod_{\ell=1}^{L_{\ii}(r,k)} \lambda_{i_{\ell}}^{(k)} \geq \prod_{\ell=1}^{L_{\ii}(r,k)} \lambda_{i_{\ell}}^{(m)}$. This corresponds to the ordering of the sides of a symbolic approximate cube to be formally introduced in Section~\ref{sec:symbolic}.  Note that the ordering of $\ii$ as a cylinder or as a cube at a scale $r$ need not be the same. Of importance are the different orderings that are `witnessed' by an $\ii\in\Sigma$ at some scale $r$:
\begin{multline}\label{eq:203}
\mathcal{A}\coloneqq \{ \sigma\in\mathcal{S}_d:\, \text{ there exist } \ii\in\Sigma \text{ and } r>0 \text{ such that } \\
\ii \text{ determines a } \sigma\text{-ordered cube at scale } r \}
\end{multline}
and
\begin{multline}\label{eq:204}
\mathcal{B}\coloneqq \{ \sigma\in\mathcal{S}_d:\, \text{ there exist } \ii\in\Sigma \text{ and } r>0 \text{ such that } \\
\ii \text{ determines a strictly } \sigma\text{-ordered cylinder at scale } r \}.
\end{multline}
Clearly $\mathcal{B}\subseteq \mathcal{A}$ because if $\sigma\in\mathcal{B}$ is witnessed by $\jj$ at scale $r$, then by defining $\ii\coloneqq \overline{\jj|L_{\jj}(r,\sigma_d)}$, i.e. repeating the word $\jj|L_{\jj}(r,\sigma_d)$ infinitely often, there is $r'$ small enough such that~\eqref{eq:202} holds. We give a more detailed account of the relationship between $\mathcal{A}$ and $\mathcal{B}$ in Section~\ref{sec:orderings}, where we show that $\mathcal{A} = \mathcal{B}$ for $d=2$ and $3$, however, also present a four dimensional example for which $\mathcal{B}\subset\mathcal{A}$. A simple example to determine $\mathcal{A}$ and $\mathcal{B}$ is when the sponge $F$ satisfies the \emph{coordinate ordering condition}, i.e. there exists a permutation $\sigma\in\mathcal{S}_d$ such that 
\begin{equation}\label{eq:10}
	0<\lambda_i^{(\sigma_d)}\leq\lambda_i^{(\sigma_{d-1})}\leq\ldots\leq\lambda_i^{(\sigma_1)}<1 \;\text{ for every } i\in\mathcal{I}.
\end{equation}
In this case, $L_{r}(\ii,\sigma_d)\leq L_{r}(\ii,\sigma_{d-1})\leq \ldots\leq L_{r}(\ii,\sigma_1)$ for every $\ii\in\Sigma$ and $r>0$, hence, $\mathcal{A}=\mathcal{B}=\{\sigma\}$ and only the projections $\Pi_n^{\sigma}F$ play a role in the study of $F$.

For each permutation $\sigma\in\mathcal{A}$ we define index sets $\mathcal{I}_d^{\sigma}\supseteq \mathcal{I}_{d-1}^{\sigma}\supseteq \ldots \supseteq \mathcal{I}_1^{\sigma}$ with $\mathcal{I}_d^{\sigma}\coloneqq \mathcal{I}$ as follows. Initially set $\mathcal{I}_d^{\sigma}= \mathcal{I}_{d-1}^{\sigma}= \ldots = \mathcal{I}_1^{\sigma}$ and then repeat the following procedure for all pairs $i<j$ ($i,j\in\mathcal{I}$). Starting from $n=d-1$ and decreasing $n$, check whether $f_i$ and $f_j$ overlap exactly on $E_n^{\sigma}$. If they do not overlap exactly for any $n$, then move onto the next pair $(i,j)$, otherwise, take the largest $n'$ for which $f_i$ and $f_j$ overlap exactly and remove $j$ from $\mathcal{I}_{n'}^{\sigma},\mathcal{I}_{n'-1}^{\sigma},\ldots, \mathcal{I}_{1}^{\sigma}$ and then move onto the next pair $(i,j)$. The sets $\mathcal{I}_{d-1}^{\sigma}, \ldots, \mathcal{I}_1^{\sigma}$ are what remain after repeating this procedure for all pairs $i<j$. Further abusing notation, we denote by $\Pi_n^{\sigma}:\, \mathcal{I}\to\mathcal{I}_n^{\sigma}$ the `projection' of $j\in\mathcal{I}$ onto $\mathcal{I}_n^{\sigma}$, i.e.
\begin{equation*}
	\Pi_n^{\sigma}j=i, \quad\text{ if } f_i \text{ and } f_j \text{ overlap exactly on } E_n^{\sigma} \text{ and } i\in\mathcal{I}_n^{\sigma}.
\end{equation*}
Defining $\Sigma_{n}^{\sigma}\coloneqq (\mathcal{I}_{n}^{\sigma})^{\mathbb{N}}$, we also let $\Pi_n^{\sigma}: \Sigma\to \Sigma_{n}^{\sigma}$ by acting coordinate wise, i.e. $\Pi_n^{\sigma}\ii=\Pi_n^{\sigma} i_1,\Pi_n^{\sigma}i_2,\ldots$. For completeness, let $\Pi_d^{\sigma}$ be the identity map on $\Sigma$.

\begin{defn}\label{def:SPPC}
A self-affine sponge $F\subset [0,1]^d$ satisfies the \emph{separation of principal projections condition} (SPPC) if for every $\sigma\in\mathcal{A}$, $1\leq n\leq d$ and $i,j\in\mathcal{I}$,
\begin{equation}\label{eq:200}
	\text{either } f_i \text{ and } f_j \text{ overlap exactly on } E_n^{\sigma} \text{ or } \Pi_n^{\sigma}\big(f_i((0,1)^d)\big)\cap \Pi_n^{\sigma}\big(f_j((0,1)^d)\big) = \emptyset.
\end{equation}
The sponge satisfies the \emph{very strong SPPC} if $(0,1)^d$ can be replaced with $[0,1]^d$.
\end{defn}

If~\eqref{eq:200} is only assumed for $n=d$, the rather weaker condition is known as  the \emph{rectangular open set condition}, e.g.~\cite{FengWang2005}. The following are the natural generalisations of Bara\'nski~\cite{BARANSKIcarpet_2007}, Lalley--Gatzouras~\cite{GatzourasLalley92} and Bedford--McMullen~\cite{Bedford84_phd, mcmullen84} carpets to higher dimensions.
\begin{example}\label{ex:Baranski}
	A \emph{Bara\'nski sponge} $F\subset[0,1]^d$ satisfies that for all $\sigma\in\mathcal{S}_d$ and $i,j\in\mathcal{I}$,
	\begin{equation*}
		\text{either } f_i \text{ and } f_j \text {overlap exactly on } E_1^{\sigma} \text{ or } \Pi_1^{\sigma}\big(f_i((0,1)^d)\big)\cap \Pi_1^{\sigma}\big(f_j((0,1)^d)\big) = \emptyset.
	\end{equation*}
	In other words, the IFSs generated on the coordinate axes by indices $\mathcal{I}_1^{\sigma}$ satisfy the open set condition. This clearly implies the SPPC. 
\end{example}
\begin{example}\label{ex:LG}
	A \emph{Lalley--Gatzouras sponge} $F\subset[0,1]^d$ satisfies the SPPC and the coordinate ordering condition~\eqref{eq:10} for some $\sigma\in\mathcal{S}_d$.
\end{example}
\begin{example}\label{ex:BM}
	A \emph{Bedford--McMullen sponge} $F\subset[0,1]^d$ is a Bara\'nski sponge which satisfies the coordinate ordering condition (hence, is also a Lalley--Gatzouras sponge) and
	\begin{equation*}
		\lambda_1^{(n)} = \lambda_2^{(n)} =\ldots = \lambda_N^{(n)} \;\text{ for all } 1\leq n\leq d.
	\end{equation*}
\end{example}

Observe that a carpet on the plane satisfies the SPPC if and only if it is either Bara\'nski (when $\#\mathcal{A}=2$) or Lalley--Gatzouras (when $\#\mathcal{A}=1$). Therefore, this definition combines these two classes in a natural way. Moreover, for dimensions $d\geq 3$ it is a wider class of sponges than simply the union of the Bara\'nski and Lalley--Gatzouras class. For $d=3$, we give a complete characterisation of the new classes that emerge in Section~\ref{sec:SPPCdim3}.
 
The very strong SPPC is a natural extension of the \emph{very strong separation condition} first introduced by King~\cite{King_LocalDimBMCarpet_95AdvMath} to study the fine multifractal spectrum of self-affine measures on Bedford--McMullen carpets. It was later adapted to higher dimensional Bedford--McMullen sponges by Olsen~\cite{Olsen_PJM98}. It is also assumed by Fraser and Howroyd~\cite{FraserHowroyd_AnnAcadSciFennMath17, FraserHowroyd_Indiana20} when calculating the Assouad dimension of self-afffine measures on these sponges. In fact, in this case the very strong separation condition is a necessary assumption. Without it, one can construct a carpet which does not carry any doubling self-affine measure, see~\cite[Section~4.2]{FraserHowroyd_AnnAcadSciFennMath17} for an example.

\subsection{Main result}\label{sec:mainresult}

In order to state our main result we need to introduce additional probability vectors derived from $\mathbf{p}=(p(i))_{i\in\mathcal{I}}$ by `projecting' it onto subsets $\mathcal{I}_n^{\sigma}\subseteq\mathcal{I}$. For $\sigma\in\mathcal{A}$ and $1\leq n\leq d-1$ let  
\begin{equation*}
	\mathbf{p}_n^{\sigma}\coloneqq \big( p_n^{\sigma}(i) \big)_{i\in\mathcal{I}_n^{\sigma}}, \;\text{ where } p_n^{\sigma}(i)\coloneqq \sum_{j\in\mathcal{I}:\, \Pi_n^{\sigma}j=i} p(j).
\end{equation*}
Observe that due to the SPPC, $p_n^{\sigma}(i)$ can also be calculated by
\begin{equation}\label{eq:23}
	p_n^{\sigma}(i)= \sum_{j\in\mathcal{I}_{n+1}^{\sigma,i}} p_{n+1}^{\sigma}(j), \;\text{ where } \mathcal{I}_{n+1}^{\sigma,i}\coloneqq \{ j\in\mathcal{I}_{n+1}^{\sigma}:\, \Pi_n^{\sigma}j=i \}.
\end{equation}
This gives rise to the conditional measure $\mathbf{P}_{n-1}^{\sigma,i}=\big( P_{n-1}^{\sigma,i}(j)\big)_{j\in\mathcal{I}_{n}^{\sigma,i}}$ along the fibre $i\in\mathcal{I}_{n-1}^{\sigma}$ for $1\leq n\leq d$ by setting
\begin{equation*}
	P_{n-1}^{\sigma,i}(j) \coloneqq \frac{p_n^{\sigma}(j)}{p_{n-1}^{\sigma}(i)},
\end{equation*}
where in case $n=1$, we define $\Pi_0^{\sigma}i=\emptyset, \mathcal{I}_0^{\sigma}=\{\emptyset\}$ and $p_0^{\sigma}(\emptyset)=1$. This is a natural extension of the conditional probabilities introduced by Olsen~\cite{Olsen_PJM98} for Bedford--McMullen sponges. For $m\geq n$ and $i\in\mathcal{I}_{m}^{\sigma}$, we slightly simplify notation by writing
\begin{equation}\label{eq:11}
	P_{n-1}^{\sigma}(\Pi_{n}^{\sigma}i) = P_{n-1}^{\sigma,\Pi_{n-1}^{\sigma}i}(\Pi_{n}^{\sigma}i).
\end{equation}

A specific choice of $\mathbf{p}$ has particular importance. For $i\in\mathcal{I}_n^{\sigma}$ ($0\leq n\leq d-1$), define $s_n^{\sigma}(i)$ to be the unique number which satisfies the equation
\begin{equation*}
	\sum_{j\in\mathcal{I}_{n+1}^{\sigma,i}} \big( \lambda_j^{(\sigma_{n+1})} \big)^{s_n^{\sigma}(i)}=1.
\end{equation*}
This is the similarity dimension of the IFS given by the ``fibre above" $i$.  The SPPC implies that $s_n^{\sigma}(i)\in[0,1]$. We define the \emph{$\sigma$-ordered coordinate-wise natural measure} as
\begin{equation}\label{eq:12}
	\mathbf{q}^{\sigma} = (q^{\sigma}(i))_{i\in\mathcal{I}}, \;\text{ where } q^{\sigma}(i)\coloneqq \prod_{n=1}^d \big( \lambda_{\Pi_n^{\sigma} i}^{(\sigma_n)} \big)^{s_{n-1}^{\sigma}(\Pi_{n-1}^{\sigma} i)}.
\end{equation}
For Bedford--McMullen sponges, Fraser and Howroyd~\cite{FraserHowroyd_AnnAcadSciFennMath17} used the terminology coordinate \emph{uniform} measure since in that case the natural measure along a fibre simplifies to the uniform measure. This measure has the special property that
\begin{equation*}
q_n^{\sigma}(i) = \sum_{j\in\mathcal{I}:\, \Pi_n^{\sigma}j=i} q^{\sigma}(j) = \prod_{m=1}^n \big( \lambda_{\Pi_m^{\sigma} i}^{(\sigma_m)} \big)^{s_{m-1}^{\sigma}(\Pi_{m-1}^{\sigma} i)}.
\end{equation*}
We are now ready to state our main result.

\begin{thm}\label{thm:dimMeasure}
Let $\nu_{\mathbf{p}}$ be a self-affine measure fully supported on a self-affine sponge satisfying the very strong SPPC. Then
\begin{equation*}
	\max_{\sigma\in\mathcal{B}}\, \overline{S}(\mathbf{p},\sigma) \leq \dim_{\mathrm{A}} \nu_{\mathbf{p}} \leq \max_{\sigma\in\mathcal{A}}\, \overline{S}(\mathbf{p},\sigma) 
\end{equation*}
and
\begin{equation*}
	\min_{\sigma\in\mathcal{A}}\, \underline{S}(\mathbf{p},\sigma) \leq \dim_{\mathrm{L}} \nu_{\mathbf{p}} \leq \min_{\sigma\in\mathcal{B}}\, \underline{S}(\mathbf{p},\sigma) ,
\end{equation*}
where
\begin{equation*}
\overline{S}(\mathbf{p},\sigma) \coloneqq  \sum_{n=1}^{d}\, \max_{i\in\mathcal{I}_n^{\sigma}} \frac{ \log P_{n-1}^{\sigma}(i) }{ \log \lambda_{i}^{(\sigma_n)} } \;\;\text{ and }\;\; \underline{S}(\mathbf{p},\sigma) \coloneqq \sum_{n=1}^{d}\, \min_{i\in\mathcal{I}_n^{\sigma}} \frac{ \log P_{n-1}^{\sigma}(i) }{ \log \lambda_{i}^{(\sigma_n)} }.
\end{equation*}
In particular, for the $\sigma$-ordered coordinate-wise natural measure
\begin{equation*}
	\overline{S}(\mathbf{q}^{\sigma} ,\sigma) = s_0^{\sigma}(\emptyset) + \sum_{n=1}^{d-1}\, \max_{i\in\mathcal{I}_n^{\sigma}} s_n^{\sigma}(i) \;\;\text{ and }\;\; 
	\underline{S}(\mathbf{q}^{\sigma} ,\sigma)  = s_0^{\sigma}(\emptyset) + \sum_{n=1}^{d-1}\, \min_{i\in\mathcal{I}_n^{\sigma}} s_n^{\sigma}(i).
\end{equation*}
\end{thm}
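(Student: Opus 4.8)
The plan is to prove the two pairs of inequalities by reducing the measure ratios $\nu_{\mathbf{p}}(B(x,R))/\nu_{\mathbf{p}}(B(x,r))$ to ratios of measures of symbolic approximate cubes, and then to obtain the closed-form evaluation at $\mathbf{q}^{\sigma}$ by a direct telescoping computation. First I would exploit the very strong SPPC to replace balls by approximate cubes: the separation forces the projected images $\Pi_n^{\sigma}(f_j([0,1]^d))$ to be pairwise disjoint whenever the maps do not overlap exactly, so for $x\in\mathrm{supp}(\nu_{\mathbf{p}})$ with address $\ii$ the quantity $\nu_{\mathbf{p}}(B(x,r))$ is comparable, up to a multiplicative constant depending only on the IFS, to the measure of the symbolic approximate cube determined by the stopping times $L_{\ii}(r,n)$ of Section~\ref{sec:symbolic}. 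I would then record the key factorisation: if $\ii$ determines a $\sigma$-ordered cube at scale $r$, then along the layer $L_{\ii}(r,\sigma_{n+1})<\ell\le L_{\ii}(r,\sigma_n)$ exactly the coordinates $\sigma_1,\dots,\sigma_n$ remain unresolved, and by the SPPC the cube measure factorises as a product over these layers of the conditional probabilities $P_{n-1}^{\sigma}$, each layer contributing an exponent governed by $\log P_{n-1}^{\sigma}(\cdot)/\log\lambda^{(\sigma_n)}_{\cdot}$. This is exactly what produces the summands of $\overline{S}$ and $\underline{S}$.

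With this factorisation in hand, the upper bound $\dim_{\mathrm{A}}\nu_{\mathbf{p}}\le\max_{\sigma\in\mathcal{A}}\overline{S}(\mathbf{p},\sigma)$ follows by bounding, scale by scale, each layer's contribution to the measure ratio by the fibre-wise maximum $\max_{i\in\mathcal{I}_n^{\sigma}}\log P_{n-1}^{\sigma}(i)/\log\lambda_i^{(\sigma_n)}$ and summing; since any pair of scales can only realise cube orderings lying in $\mathcal{A}$, taking the maximum over $\sigma\in\mathcal{A}$ is both sufficient and necessary. The lower bound $\dim_{\mathrm{A}}\nu_{\mathbf{p}}\ge\max_{\sigma\in\mathcal{B}}\overline{S}(\mathbf{p},\sigma)$ is dual: for fixed $\sigma\in\mathcal{B}$ I would construct an explicit address $\ii$ and nested scales $r<R$ with $R/r\to\infty$ along which, in each layer, the symbol realising the maximising fibre is used, so that the ratio attains $(R/r)^{\overline{S}(\mathbf{p},\sigma)}$ up to constants. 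One must restrict to $\mathcal{B}$ rather than $\mathcal{A}$ precisely because this construction needs the chosen ordering to persist \emph{stably} across a whole range of scales, which holds exactly when $\sigma$ is witnessed by a strictly $\sigma$-ordered cylinder. The lower-dimension inequalities are entirely analogous, with maxima replaced by minima and the two inequalities interchanged.

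The hard part will be the obstacle flagged in the introduction: as one zooms from scale $R$ to scale $r$ the cube ordering of a fixed point may change, so the clean layer factorisation holds for the numerator and denominator with respect to possibly \emph{different} permutations $\sigma,\sigma'\in\mathcal{A}$, and the two factorisations must be compared. I would handle this as in Proposition~\ref{prop:ratioOfMeasures}, subdividing the scale range $[r,R]$ at the finitely many scales where the ordering changes, estimating the ratio across each ordering-homogeneous sub-range separately, and then recombining; the subdivision argument is what keeps the accumulated multiplicative constants under control and guarantees that the exponent never exceeds $\max_{\sigma\in\mathcal{A}}\overline{S}(\mathbf{p},\sigma)$.

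Finally, the evaluation at the coordinate-wise natural measure $\mathbf{q}^{\sigma}$ is a direct computation. Using the special property $q_n^{\sigma}(i)=\prod_{m=1}^{n}(\lambda_{\Pi_m^{\sigma} i}^{(\sigma_m)})^{s_{m-1}^{\sigma}(\Pi_{m-1}^{\sigma} i)}$ recorded after~\eqref{eq:12}, together with the consistency $\Pi_m^{\sigma}(\Pi_{n-1}^{\sigma} i)=\Pi_m^{\sigma} i$ for $m\le n-1$, the conditional probabilities telescope: for $i\in\mathcal{I}_n^{\sigma}$,
\[
P_{n-1}^{\sigma}(i)=\frac{q_n^{\sigma}(i)}{q_{n-1}^{\sigma}(\Pi_{n-1}^{\sigma} i)}=\big(\lambda_i^{(\sigma_n)}\big)^{s_{n-1}^{\sigma}(\Pi_{n-1}^{\sigma} i)},
\]
so that $\log P_{n-1}^{\sigma}(i)/\log\lambda_i^{(\sigma_n)}=s_{n-1}^{\sigma}(\Pi_{n-1}^{\sigma} i)$, a quantity depending on $i$ only through $\Pi_{n-1}^{\sigma} i\in\mathcal{I}_{n-1}^{\sigma}$. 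Since $\Pi_{n-1}^{\sigma}$ maps $\mathcal{I}_n^{\sigma}$ onto $\mathcal{I}_{n-1}^{\sigma}$, maximising over $i\in\mathcal{I}_n^{\sigma}$ coincides with maximising $s_{n-1}^{\sigma}$ over $\mathcal{I}_{n-1}^{\sigma}$; reindexing the sum by $m=n-1$ and using $\mathcal{I}_0^{\sigma}=\{\emptyset\}$ yields $\overline{S}(\mathbf{q}^{\sigma},\sigma)=s_0^{\sigma}(\emptyset)+\sum_{n=1}^{d-1}\max_{i\in\mathcal{I}_n^{\sigma}}s_n^{\sigma}(i)$, and the identical argument with minima gives the formula for $\underline{S}(\mathbf{q}^{\sigma},\sigma)$.
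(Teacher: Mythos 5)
Your plan follows the paper's architecture step for step: balls are traded for approximate cubes via the very strong SPPC (Lemma~\ref{lem:30}), the cube measure factorises over layers (Lemma~\ref{lem:measureApproxCube}), the upper bounds come from a two-scale ratio estimate valid across changing orderings (Proposition~\ref{prop:ratioOfMeasures}), the lower bound for $\dim_{\mathrm{A}}\nu_{\mathbf{p}}$ comes from an explicit word built from the maximising fibres for $\sigma\in\mathcal{B}$, and your telescoping evaluation at $\mathbf{q}^{\sigma}$ is exactly the paper's final computation. The genuine gap is in the step the paper itself singles out as the key technical challenge. You propose to prove the ratio estimate by ``subdividing the scale range $[r,R]$ at the finitely many scales where the ordering changes'' and applying the fixed-ordering estimate on each ordering-homogeneous sub-range. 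Taken literally, this fails: for fixed $\ii$ the cube ordering $\sigma_{\ii}(r')$ can change on the order of $\log(R/r)$ times as $r'$ runs over $[r,R]$ (take a Bara\'nski carpet and $\ii=1212\ldots$, where the two stopping times leapfrog each other indefinitely), so the number $M$ of sub-ranges is not bounded in terms of $F$ and $d$ alone. Each application of the fixed-ordering estimate (Lemma~\ref{lem:ratioOfMeasures}) costs a multiplicative constant $C>1$, and $C^{M}$ is then a positive power of $R/r$, which adds a fixed positive constant to the exponent and destroys the bound $\max_{\sigma\in\mathcal{A}}\overline{S}(\mathbf{p},\sigma)$. (A second problem: Lemma~\ref{lem:ratioOfMeasures} needs the two scales to differ by a factor $\lambda_{\min}$, which consecutive change points need not do.) The paper's subdivision is different, and this is the whole point: it picks at most $d!+1$ scales by an infimum construction ($R_1=\inf\{r'>r:\,\sigma_{\ii}(r')=\sigma_{\ii}(R)\}$, then $R_k=\inf\{r'>r:\,\sigma_{\ii}(r')=\sigma_{\ii}((1-\varepsilon)R_{k-1})\}$), which forces every new target ordering to be distinct from all previous ones, and it bridges the short jump from $R_k$ to $(1-\varepsilon)R_k$ --- where the ordering may change --- by Lemma~\ref{lem:40}, whose proof in turn needs the block-permutation observation of Remark~\ref{rem:1}.

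A smaller but real under-specification is your lower bound for the Assouad dimension. Filling each layer $\{L_{\ii}(R,\sigma_n)+1,\ldots,L_{\ii}(r,\sigma_n)\}$ with the maximising symbol only yields the claimed ratio if the stopping times of the word being constructed really are nested as $L_{\ii}(r,\sigma_n)<L_{\ii}(R,\sigma_{n-1})$ and produce $\sigma$-ordered cubes at both scales; since those stopping times depend on the word itself, this is a genuine consistency requirement, not a convention. The paper secures it by padding all remaining entries with a single letter $j$ realising the strict cylinder ordering (this is exactly where $\sigma\in\mathcal{B}$ enters), restricting $r$ to the explicit window \eqref{keychoice}, and verifying the nesting \eqref{keyordering} by backwards induction on $n$. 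Your appeal to the ordering ``persisting stably'' for $\sigma\in\mathcal{B}$ is the correct intuition, but as stated it is an assertion rather than an argument; the final telescoping computation for $\mathbf{q}^{\sigma}$, by contrast, is complete and correct as you wrote it.
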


Symbolic arguments used in our proof are collected in Section~\ref{sec:symbolic} while the theorem itself is proved in Section~\ref{sec:proofofmain}. The result generalises the formula in~\cite[Theorem~2.6]{FraserHowroyd_Indiana20} for $\dim_{\mathrm{A}} \nu_{\mathbf{p}}$ in case of Bedford--McMullen sponges. A sufficient condition for the lower and upper bounds to coincide is if $\mathcal{A}=\mathcal{B}$. This occurs when $F$ is a Lalley--Gatzouras sponge in any dimensions, moreover, we prove in Section~\ref{sec:orderings} that $\mathcal{A}=\mathcal{B}$ for all $F$ satisfying the SPPC in dimensions $d=2$ and $3$. However, $\mathcal{A}=\mathcal{B}$ is not a necessary condition. We give an example in four dimensions for which the lower and upper bounds coincide even though $\mathcal{B}\subset \mathcal{A}$, see Proposition~\ref{prop:exd4}. Finding a potential example for $\max_{\sigma\in\mathcal{B}}\, \overline{S}(\mathbf{p},\sigma) < \max_{\sigma\in\mathcal{A}}\, \overline{S}(\mathbf{p},\sigma)$ seems to be a more delicate matter and is a natural direction for further research.

\begin{ques}\label{ques:1}
Is it true that $\max_{\sigma\in\mathcal{B}}\, \overline{S}(\mathbf{p},\sigma) = \max_{\sigma\in\mathcal{A}}\, \overline{S}(\mathbf{p},\sigma)$ even if $\mathcal{B}\subset \mathcal{A}$? If not then what is the correct value of $\dim_{\mathrm{A}}\nu_{\mathbf{p}}$?
\end{ques}

\subsection{A dimension gap: examples and non-examples}\label{sec:dimgap}

Very often it is the case that one of the bounds to determine some dimension of a set is obtained by calculating the respective dimension of measures supported by the set. For example, for the Assouad dimension Luukkainen and Saksman~\cite{LuukkainenSaksman_ProcAMS98} and for the lower dimension Bylund and Gudayol~\cite{BylundGudayol_ProcAMS00} proved that if $F \subseteq \mathbb{R}^d$ is closed, then
\begin{equation*}
	\dim_{\mathrm{A}} F=\inf \left\{\dim_{\mathrm{A}} \nu: \mathrm{supp}(\nu)=F\right\} 
\end{equation*}
and
\begin{equation*}
	\dim_{\mathrm{L}} F=\sup \left\{\dim_{\mathrm{L}} \nu: \mathrm{supp}(\nu)=F\right\}.
\end{equation*}
The well-known mass distribution principle and Frostman's lemma combine to provide a similar result for the Hausdorff dimension, for example, see \cite{FalconerBook}. There is also a relatively new notion of box or `Minkowski' dimension for measures and again there is a similar result, see \cite[Theorem 2.1]{FFK_MinkowskiDimMeas_2020arxiv}.  Therefore, it is interesting to see whether the dimension of a set is still attained by restricting to a certain class of measures (e.g. dynamically invariant measures) or if  there is a strictly positive `dimension gap'.

Self-affine measures supported on carpets and sponges have been used to showcase both kinds of behaviour. Here we just give a few highlights and direct the interested reader to the book~\cite[Chapter 8.5]{fraser_2020Book} for a more in-depth discussion. The Hausdorff dimension of a Lalley--Gatzouras carpet is attained by a self-affine measure~\cite{GatzourasLalley92}, however, this is not the case in higher dimensions by the counterexample of Das and Simmons~\cite{das2017hausdorff}. The box dimension of a Bedford--McMullen carpet is attained by a self-affine measure if and only if the carpet has uniform fibres, see \cite{BJK_Chaos_IMRN22}. The Assouad and lower dimensions of a Lalley--Gatzouras sponge are simultaneously realised by the same self-affine measure, namely the coordinate-wise natural measure~\cite{Howroyd_JFG19}. 

Going beyond the Lalley--Gatzouras class, one might expect that if $\mathcal{A}=\mathcal{B}$ then one of the coordinate-wise natural measures could still realise the Assouad dimension and potentially another the lower dimension. An interesting corollary of Theorem~\ref{thm:dimMeasure} is that this is not the case in general. A strictly positive dimension gap can occur on the plane, noting that $\dim_{\mathrm{A}}F$ and $\dim_{\mathrm{L}}F$ were calculated by Fraser~\cite{Fraser_TAMS2014} using covering arguments.

\begin{cor}\label{cor:dimgap}
There exists a Bara\'nski carpet $F$ such that 
\begin{equation}\label{eq:28}
	\inf_{\mathbf{p}} \dim_{\mathrm{A}} \nu_{\mathbf{p}} \geq  \dim_{\mathrm{A}} F+\delta_F
\end{equation}
for some $\delta_F>0$ depending only on $F$. Moreover, there also exists a Bara\'nski carpet $E$ such that $\dim_{\mathrm{A}} E= \dim_{\mathrm{A}} \nu_{\mathbf{q}^{(1,2)}}$.
\end{cor}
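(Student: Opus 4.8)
The plan is to turn both statements into a single two-variable optimisation via the exact formula from Theorem~\ref{thm:dimMeasure}. Since $d=2$ we have $\mathcal{A}=\mathcal{B}$ by Lemma~\ref{lem:B=Aford23}, so the two bounds in Theorem~\ref{thm:dimMeasure} collapse to
\begin{equation*}
\dim_{\mathrm{A}}\nu_{\mathbf{p}}=\max_{\sigma\in\mathcal{A}}\overline{S}(\mathbf{p},\sigma),
\end{equation*}
and for a genuine Bara\'nski carpet $\mathcal{A}=\{(1,2),(2,1)\}$. The first thing I would record is that, for a \emph{fixed} ordering $\sigma$, the coordinate-wise natural measure minimises $\overline{S}(\cdot,\sigma)$: in the decomposition $\overline{S}(\mathbf{p},\sigma)=\max_{i\in\mathcal{I}_1^{\sigma}}\frac{\log P_0^{\sigma}(i)}{\log\lambda_i^{(\sigma_1)}}+\max_{i\in\mathcal{I}_2^{\sigma}}\frac{\log P_1^{\sigma}(i)}{\log\lambda_i^{(\sigma_2)}}$ the two summands depend on the marginal and on the fibre conditionals respectively, which can be varied independently, and each is minimised by equalising ratios (a Frostman/similarity weighting), giving $\min_{\mathbf{p}}\overline{S}(\mathbf{p},\sigma)=\overline{S}(\mathbf{q}^{\sigma},\sigma)=s_0^{\sigma}(\emptyset)+\max_{i\in\mathcal{I}_1^{\sigma}}s_1^{\sigma}(i)$. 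Since Fraser's covering computation \cite{Fraser_TAMS2014} identifies $\dim_{\mathrm{A}}F$ with exactly this quantity maximised over $\sigma$, the whole corollary reduces to comparing
\begin{equation*}
\inf_{\mathbf{p}}\dim_{\mathrm{A}}\nu_{\mathbf{p}}=\min_{\mathbf{p}}\max_{\sigma}\overline{S}(\mathbf{p},\sigma)\qquad\text{with}\qquad\dim_{\mathrm{A}}F=\max_{\sigma}\min_{\mathbf{p}}\overline{S}(\mathbf{p},\sigma),
\end{equation*}
i.e. to a minimax-versus-maximin question for one explicit function on the simplex of probability vectors.

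For the first assertion I would exhibit a Bara\'nski carpet — this is the content of Proposition~\ref{prop:ex1} — in which the two orderings genuinely compete, so that the above minimax strictly exceeds the maximin. The mechanism is that the marginal data seen by $\sigma=(1,2)$ (grouping maps by columns) and by $\sigma=(2,1)$ (grouping by rows) are incompatible: the weighting that pulls $\overline{S}(\mathbf{p},(2,1))$ down towards $\dim_{\mathrm{A}}F$ is forced to inflate $\overline{S}(\mathbf{p},(1,2))$, and symmetrically. The main obstacle is to convert this qualitative tension into a \emph{uniform} gap $\delta_F>0$, valid for every $\mathbf{p}$. I expect to do this by compactness: $\mathbf{p}\mapsto\max_{\sigma}\overline{S}(\mathbf{p},\sigma)$ is continuous on the compact probability simplex, so it attains its infimum, and it suffices to verify the strict inequality $\max_{\sigma}\overline{S}(\mathbf{p},\sigma)>\dim_{\mathrm{A}}F$ at the minimiser. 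With the contraction data fixed this is a finite explicit computation; the delicate part is choosing the ratios so that neither candidate optimiser $\mathbf{q}^{(1,2)}$ nor $\mathbf{q}^{(2,1)}$ nor any interpolation between them can simultaneously make both $\overline{S}(\mathbf{p},\sigma)$ small.

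For the second assertion I would instead build a Bara\'nski carpet in which one ordering dominates, so that the minimax and maximin coincide and are attained at $\mathbf{q}^{(1,2)}$. Concretely I would arrange the contraction ratios and rectangle pattern (for instance a configuration that is Lalley--Gatzouras-like in the dominant direction, with only a mild reversed component so that $\mathcal{A}$ still contains both permutations) so that
\begin{equation*}
\overline{S}(\mathbf{q}^{(1,2)},(1,2))\geq\overline{S}(\mathbf{q}^{(1,2)},(2,1))\qquad\text{and}\qquad\overline{S}(\mathbf{q}^{(1,2)},(1,2))\geq\overline{S}(\mathbf{q}^{(2,1)},(2,1)).
\end{equation*}
The first inequality forces $\dim_{\mathrm{A}}\nu_{\mathbf{q}^{(1,2)}}=\overline{S}(\mathbf{q}^{(1,2)},(1,2))$, while the second, combined with $\dim_{\mathrm{A}}F=\max_{\sigma}\overline{S}(\mathbf{q}^{\sigma},\sigma)$, forces $\dim_{\mathrm{A}}F=\overline{S}(\mathbf{q}^{(1,2)},(1,2))$; hence the two agree. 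Verifying the two displayed inequalities is again a direct computation with the explicit data, so here the construction, rather than any analytic difficulty, is the whole content.
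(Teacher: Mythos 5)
Your reduction is sound, and it is essentially the same reduction the paper makes: for $d=2$, Theorem~\ref{thm:dimMeasure} plus Lemma~\ref{lem:B=Aford23} give $\dim_{\mathrm{A}}\nu_{\mathbf{p}}=\max_{\sigma\in\mathcal{A}}\overline{S}(\mathbf{p},\sigma)$; your identity $\min_{\mathbf{p}}\overline{S}(\mathbf{p},\sigma)=\overline{S}(\mathbf{q}^{\sigma},\sigma)$ is correct (marginal and fibre conditionals vary independently and each summand is minimised by the Frostman weighting); and Fraser's formula does identify $\dim_{\mathrm{A}}F$ with the maximin, so the corollary is indeed a strictness-versus-equality question for a minimax. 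The genuine gap is that you stop there: the corollary is an existence statement, and you never exhibit a carpet for either assertion. You describe the properties the examples should have (``the two orderings genuinely compete''; ``Lalley--Gatzouras-like with a mild reversed component'') and defer to ``a finite explicit computation'', even acknowledging that ``the construction \ldots is the whole content'' --- but that construction and its verification are precisely what is missing. The paper's key device for the gap, absent from your outline, is to take a non-Lalley--Gatzouras Bara\'nski carpet with \emph{no exact overlaps} in either projection: then every fibre is a singleton, $\overline{S}(\mathbf{p},\sigma)$ collapses to the single term $\max_i \log p(i)/\log\lambda_i^{(\sigma_1)}$, one has $\dim_{\mathrm{A}}F=s$ (where $\sum_i a_i^s=1$ with $a_i=\lambda_i^{(1)}$, $b_i=\lambda_i^{(2)}$, and $s\geq t$), the only $\mathbf{p}$ with $\overline{S}(\mathbf{p},(1,2))\leq s$ is $p(i)=a_i^s$, and the non-LG hypothesis supplies $\ell$ with $b_\ell^s>a_\ell^s$, forcing $\overline{S}(\mathbf{p},(2,1))>s$ there. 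For the second assertion the paper checks exactly your two displayed inequalities, but on a concrete three-map family (Proposition~\ref{prop:ex2}), where they hold for $a_1\in(1/2,\,1-1/(2\sqrt{2}))$; note also that by the above mechanism any genuine two-map Bara\'nski carpet has a gap, so at least three maps are needed --- a constraint your sketch does not engage with.

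There is also a flaw in your uniformity argument as stated. The admissible $\mathbf{p}$ have strictly positive entries, so the relevant simplex is \emph{open}, and $\overline{S}(\cdot,\sigma)$ is not continuous (or even finite) up to its boundary; ``continuous on the compact probability simplex'' is not available. The argument is repairable: $\overline{S}(\mathbf{p},\sigma)\to\infty$ as any $p(i)\to 0$, so $\max_{\sigma}\overline{S}(\cdot,\sigma)$ is coercive, attains its infimum at an interior point, and pointwise strict inequality $\max_{\sigma}\overline{S}(\mathbf{p},\sigma)>\dim_{\mathrm{A}}F$ for every $\mathbf{p}$ would then yield a uniform $\delta_F$. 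This coercivity route is a legitimate alternative to the paper, which instead produces $\delta_F$ explicitly by a case analysis (either $p(i)\leq a_i^s+\eps$ for all $i$, in which case the index $\ell$ in the other ordering wins, or some $p(j)>a_j^s+\eps$, in which case the pigeonhole principle gives $k$ with $p(k)\leq a_k^s-\eps/(N-1)$). But either way you still need the pointwise strict inequality for a concrete carpet, which returns you to the missing construction above.
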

These families of examples are presented in Section~\ref{sec:Bexamples}. Finding conditions under which there is a dimension gap also seems a delicate issue.

\begin{ques}
Is it possible to give simple necessary and/or sufficient conditions for general self-affine  carpets satisfying the very strong SPPC for there to be a dimension gap in the sense of~\eqref{eq:28}?
\end{ques}

An unfortunate consequence of Corollary~\ref{cor:dimgap} is that in general the class of self-affine measures is insufficient to use in order to determine $\dim_{\mathrm{A}}F$.

\begin{ques}
What class $\mathcal{P}$ of measures should be used on the plane to ensure $\inf_{\nu\in\mathcal{P}} \dim_{\mathrm{A}} \nu =  \dim_{\mathrm{A}} F$? For example, can $\mathcal{P}$ be taken to be the set of invariant measures? 
\end{ques}

\section{Comparing orderings of cubes and cylinders}\label{sec:orderings}

In this section we establish some further relationships between $\mathcal{A}$ and $\mathcal{B}$, recall~\eqref{eq:203} and~\eqref{eq:204}. We say that \emph{coordinate $x$ dominates coordinate $y$}, denoted $y\prec x$, if 
\begin{equation}\label{eq:300}
\lambda_i^{(y)} \leq \lambda_i^{(x)} \text{ for every } i\in\mathcal{I}.
\end{equation}
Since any two coordinates $x\neq y$ are distinguishable~\eqref{eq:00}, there actually exists an $i$ for which the inequality is strict. A consequence of \eqref{eq:300} is that $L_{\ii}(r,y)\leq L_{\ii}(r,x)$ for all $\ii\in\Sigma$ and $r>0$, therefore, $x$ must precede $y$ in any $\sigma\in\mathcal{A}$. As a result, if there is a chain of coordinates $x_n\prec x_{n-1}\prec \ldots \prec x_1$, then $\#\mathcal{A}\leq d!/n!$. Moreover, if $y\prec x$, then the orthogonal projection onto the $xy$-plane must be a Lalley--Gatzouras carpet with coordinate $x$ the dominant, while if neither dominates the other, then the projection is a Bara\'nski carpet. In general, we say $F$ is a \emph{genuine} Bara\'nski sponge if there do not exist coordinates $x,y$ with $x \prec y$. An example with two maps is if $\lambda_1^{(d)}<\lambda_1^{(d-1)}<\ldots <\lambda_1^{(1)}$ and $\lambda_2^{(1)}<\lambda_2^{(2)}<\ldots <\lambda_2^{(d)}$.

We start with a useful equivalent characterisation of $\mathcal{B}$ by a condition on the maps of the IFS. Let $\mathcal{P}_{\mathcal{I}}$ denote the set of all probability vectors on $\mathcal{I}$. For a coordinate $x$ and $\mathbf{p}\in\mathcal{P}_{\mathcal{I}}$, we define the \emph{Lyapunov exponent} to be $\chi_x(\mathbf{p}) \coloneqq -\sum_{i\in\mathcal{I}}p(i)\log \lambda_i^{(x)}$. Observe that if $y\prec x$, then $\chi_{x}(\mathbf{p}) < \chi_{y}(\mathbf{p})$ for every $\mathbf{p}\in\mathcal{P}_{\mathcal{I}}$. The following lemma shows that to determine $\mathcal{B}$ it is enough to see how $\mathcal{P}_\mathcal{I}$ gets partitioned by the different orderings of Lyapunov exponents.
\begin{lma}\label{lem:EquivCharacB}
An ordering $\sigma\in\mathcal{B}$ if and only if there exists $\mathbf{p}\in\mathcal{P}_{\mathcal{I}}$ such that $\chi_{\sigma_1}(\mathbf{p}) < \chi_{\sigma_2}(\mathbf{p}) < \ldots < \chi_{\sigma_d}(\mathbf{p})$.
\end{lma}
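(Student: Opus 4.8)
The plan is to exploit the elementary dictionary between finite words, their empirical symbol frequencies, and the Lyapunov exponents $\chi_x$ via the logarithm. Given a finite word $i_1,\ldots,i_K$, associate to it the empirical frequency vector $\mathbf{p}$ defined by $p(i) = \tfrac1K \#\{1\le \ell \le K : i_\ell = i\} \in \mathcal{P}_{\mathcal{I}}$. Then for every coordinate $x$ one has the identity $\tfrac1K \sum_{\ell=1}^K \log\lambda_{i_\ell}^{(x)} = \sum_{i\in\mathcal{I}} p(i)\log\lambda_i^{(x)} = -\chi_x(\mathbf{p})$. This converts the defining product inequalities of a strictly $\sigma$-ordered cylinder, recall \eqref{eq:201} and \eqref{eq:204}, into strict inequalities between Lyapunov exponents, and the entire proof amounts to reading this correspondence in both directions. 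For the forward implication, suppose $\sigma\in\mathcal{B}$ is witnessed by $\ii\in\Sigma$ and $r>0$, and put $K = L_{\ii}(r,\sigma_d)\ge 1$. Taking logarithms of the strict chain \eqref{eq:201}, dividing by $K>0$, and applying the identity above to the empirical measure $\mathbf{p}$ of $i_1,\ldots,i_K$ produces $-\chi_{\sigma_d}(\mathbf{p}) < \cdots < -\chi_{\sigma_1}(\mathbf{p})$, that is $\chi_{\sigma_1}(\mathbf{p}) < \cdots < \chi_{\sigma_d}(\mathbf{p})$, as required.

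For the reverse implication I would construct a periodic witness. The maps $\mathbf{p}\mapsto \chi_x(\mathbf{p})$ are linear, hence continuous, so the set of $\mathbf{p}$ satisfying the strict chain $\chi_{\sigma_1}(\mathbf{p}) < \cdots < \chi_{\sigma_d}(\mathbf{p})$ is open in the simplex $\mathcal{P}_{\mathcal{I}}$; by hypothesis it is nonempty, so it contains a rational vector $\mathbf{p}'$ with $p'(i) = a_i/M$, where $a_i \in \mathbb{Z}_{\ge 0}$ and $\sum_{i\in\mathcal{I}} a_i = M$. Let $\mathbf{w}$ be any word of length $M$ in which each symbol $i$ occurs exactly $a_i$ times, and set $\ii = \overline{\mathbf{w}}$. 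Choosing $r$ in the nonempty interval $\big[\prod_{\ell=1}^{M}\lambda_{i_\ell}^{(\sigma_d)},\, \prod_{\ell=1}^{M-1}\lambda_{i_\ell}^{(\sigma_d)}\big)$ forces $L_{\ii}(r,\sigma_d) = M$ by \eqref{eq:21}. Over this first full period the empirical measure is exactly $\mathbf{p}'$, so $\prod_{\ell=1}^{M}\lambda_{i_\ell}^{(\sigma_n)} = \exp\!\big(-M\chi_{\sigma_n}(\mathbf{p}')\big)$ for each $n$, and the strict ordering of the exponents gives precisely the strict chain \eqref{eq:201}. Hence $\ii$ determines a strictly $\sigma$-ordered cylinder at scale $r$ and $\sigma\in\mathcal{B}$.

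The only genuinely non-formal step is this rational approximation in the reverse direction: an empirical frequency vector is necessarily rational, so one cannot realise a prescribed irrational $\mathbf{p}$ exactly as a cylinder frequency, and one must first pass to a nearby rational $\mathbf{p}'$ lying in the open region cut out by the strict inequalities. This is exactly where continuity (indeed linearity) of $\chi_x$ is used, and it is the point I would take care to spell out. The remaining bookkeeping — fixing $r$ so that the $\sigma_d$-stopping time equals $M$, and handling the empty-product convention so that $K\ge 1$ in the forward direction — is routine.
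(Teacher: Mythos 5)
Your proof is correct and follows essentially the same route as the paper's: the forward direction via the empirical frequency vector and the identity $\prod_{\ell=1}^K \lambda_{i_\ell}^{(x)} = \exp\bigl(-K\chi_x(\mathbf{p})\bigr)$, and the converse via rational approximation within the open region cut out by the strict inequalities, realised by an explicit word with prescribed symbol counts and a choice of $r$ making the $\sigma_d$-stopping time equal to the word length. Your write-up is in fact slightly more explicit than the paper's (openness of the strict-inequality region, the precise interval for $r$), but the underlying argument is identical.
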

\begin{proof}
By introducing the empirical probability vector $\mathbf{t}_{\ii}^K=(t_{\ii}^K(i))_{i\in\mathcal{I}}$ with coordinate
\begin{equation*}
	t_{\ii}^K(i)\coloneqq \frac{1}{K} \#\{1\leq k\leq K:\, i_{k}=i\}
\end{equation*}
for $\ii\in\Sigma$, $K\in\mathbb{N}$ and $i\in\mathcal{I}$, we can express for any coordinate $n$,
\begin{equation*}
	\prod_{\ell=1}^{K} \lambda_{i_{\ell}}^{(n)} = \prod_{i\in\mathcal{I}} \big( \lambda_{i}^{(n)} \big)^{K\cdot t_{\ii}^K(i)} = \exp\Big[ K\sum_{i\in\mathcal{I}}t_{\ii}^K(i)\log \lambda_{i_{\ell}}^{(n)}  \Big] = \exp \big[ -K\cdot \chi_n(\mathbf{t}_{\ii}^K) \big].
\end{equation*}
By definition, if $\sigma\in\mathcal{B}$ then there exist $\ii\in\Sigma$ and $r>0$ such that
\begin{equation*}
	\prod_{\ell=1}^{L_{\ii}(r,\sigma_d)} \lambda_{i_{\ell}}^{(\sigma_d)} < \prod_{\ell=1}^{L_{\ii}(r,\sigma_d)} \lambda_{i_{\ell}}^{(\sigma_{d-1})} <\ldots < \prod_{\ell=1}^{L_{\ii}(r,\sigma_d)} \lambda_{i_{\ell}}^{(\sigma_1)}.
\end{equation*}
This clearly implies $\chi_{\sigma_1}(\mathbf{p}) < \chi_{\sigma_2}(\mathbf{p}) < \ldots < \chi_{\sigma_d}(\mathbf{p})$ with $\mathbf{p}=\mathbf{t}_{\ii}^{L_{\ii}(r,\sigma_d)}$.
	
Conversely, if $\chi_{\sigma_1}(\mathbf{p}) < \chi_{\sigma_2}(\mathbf{p}) < \ldots < \chi_{\sigma_d}(\mathbf{p})$ then there also exists $\mathbf{q}\in\mathcal{P}_{\mathcal{I}}$ arbitrarily close to $\mathbf{p}$ with the property that each element has the form $q(i)=a_i/K$ for some $a_i,K\in\mathbb{N}$ and still $\chi_{\sigma_1}(\mathbf{q}) < \chi_{\sigma_2}(\mathbf{q}) < \ldots < \chi_{\sigma_d}(\mathbf{q})$. Then any $\ii\in\Sigma$ such that $\mathbf{t}_{\ii}^K=\mathbf{q}$ and $r=\prod_{\ell=1}^{K} \lambda_{i_{\ell}}^{(\sigma_d)}$ shows that $\sigma\in\mathcal{B}$.
\end{proof}

Consider the set $\mathcal{Q}\coloneqq\{\mathbf{p}\in\mathcal{P}_{\mathcal{I}}:\, \text{ there exist } x\neq y \text{ such that } \chi_{x}(\mathbf{p}) = \chi_{y}(\mathbf{p})\}$. It is the union of lower dimensional slices of $\mathcal{P}_{\mathcal{I}}$. Since all pairs of coordinates are distinguishable~\eqref{eq:00}, for every $\mathbf{q}\in\mathcal{Q}$ with $\chi_{\sigma_1}(\mathbf{q}) \leq \chi_{\sigma_2}(\mathbf{q}) \leq \ldots \leq \chi_{\sigma_d}(\mathbf{q})$ there exists $\mathbf{p}\in\mathcal{P}_{\mathcal{I}}\setminus \mathcal{Q}$ with $\chi_{\sigma_1}(\mathbf{p}) < \chi_{\sigma_2}(\mathbf{p}) < \ldots < \chi_{\sigma_d}(\mathbf{p})$. Therefore, dropping the word `strictly' from the definition of $\mathcal{B}$ in~\eqref{eq:204} gives the same set of orderings.

The relationship $\mathcal{B}\subseteq\mathcal{A}$ always holds. It is interesting to see whether the inclusion is strict or not.
\begin{lma}\label{lem:B=Aford23}
If $d=2$ or $3$, then $\mathcal{A}=\mathcal{B}$ for every sponge $F$ satisfying the SPPC.
\end{lma}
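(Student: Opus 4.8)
The plan is to prove $\mathcal{A}=\mathcal{B}$ by establishing $\mathcal{A}\subseteq\mathcal{B}$, since the reverse inclusion $\mathcal{B}\subseteq\mathcal{A}$ always holds. By Lemma~\ref{lem:EquivCharacB} together with the subsequent observation that the word ``strictly'' may be dropped from the definition of $\mathcal{B}$, it suffices, for each $\sigma\in\mathcal{A}$, to produce a single $\mathbf{p}\in\mathcal{P}_{\mathcal{I}}$ with the \emph{weak} ordering $\chi_{\sigma_1}(\mathbf{p})\leq\chi_{\sigma_2}(\mathbf{p})\leq\cdots\leq\chi_{\sigma_d}(\mathbf{p})$. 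The bridge between the defining products in~\eqref{eq:21} and the Lyapunov exponents is the identity $\chi_n(\mathbf{t}_{\ii}^{k})=-\tfrac{1}{k}\log\prod_{\ell=1}^{k}\lambda_{i_\ell}^{(n)}$, so I will manufacture the required $\mathbf{p}$ as an empirical measure $\mathbf{t}_{\ii}^{k}$ of a prefix of the word $\ii$ witnessing $\sigma\in\mathcal{A}$, or as a convex combination of two such.

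Fix $\sigma\in\mathcal{A}$ witnessed by $(\ii,r)$, so that $L_{\ii}(r,\sigma_d)\leq\cdots\leq L_{\ii}(r,\sigma_1)$ by~\eqref{eq:202}. The key elementary observation is that, at a prefix length equal to one of the stopping times, the coordinates split cleanly about the threshold $(-\log r)/k$: if $K=L_{\ii}(r,\sigma_n)$, then every $\sigma_m$ with $m\geq n$ has already stopped, so $\prod_{\ell=1}^{K}\lambda_{i_\ell}^{(\sigma_m)}\leq r$ and hence $\chi_{\sigma_m}(\mathbf{t}_{\ii}^{K})\geq(-\log r)/K$, while every $\sigma_m$ with $m<n$ has not stopped and satisfies $\chi_{\sigma_m}(\mathbf{t}_{\ii}^{K})<(-\log r)/K$. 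For $d=2$, taking $K=L_{\ii}(r,\sigma_2)$ immediately gives $\chi_{\sigma_1}(\mathbf{t}_{\ii}^{K})<\chi_{\sigma_2}(\mathbf{t}_{\ii}^{K})$, so $\sigma\in\mathcal{B}$ and we are done.

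For $d=3$ the same choice $K=L_{\ii}(r,\sigma_2)$ places $\chi_{\sigma_1}$ strictly below both $\chi_{\sigma_2}$ and $\chi_{\sigma_3}$, but the \emph{pair} $\{\sigma_2,\sigma_3\}$ need not be correctly ordered at $\mathbf{t}_{\ii}^{K}$: both exceed the threshold, and the bounded multiplicative overshoot incurred at each stopping time in~\eqref{eq:21} may leave $\chi_{\sigma_2}(\mathbf{t}_{\ii}^{K})>\chi_{\sigma_3}(\mathbf{t}_{\ii}^{K})$. To repair this I will run a convexity argument. Consider also $\mathbf{a}=\mathbf{t}_{\ii}^{K'}$ with $K'=L_{\ii}(r,\sigma_3)$; at this shorter prefix only $\sigma_3$ has stopped, so $\chi_{\sigma_3}(\mathbf{a})$ is strictly the largest of the three. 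Since each $\chi_n$ is linear in $\mathbf{p}$ and $\mathcal{P}_{\mathcal{I}}$ is convex, I restrict attention to the segment in $\mathcal{P}_{\mathcal{I}}$ joining $\mathbf{a}$ to $\mathbf{t}_{\ii}^{K}$. Along it the affine function $\chi_{\sigma_1}-\chi_{\sigma_3}$ is negative at both endpoints, hence negative throughout; and $\chi_{\sigma_2}-\chi_{\sigma_3}$ is negative at $\mathbf{a}$. If it is also non-positive at $\mathbf{t}_{\ii}^{K}$ then $\mathbf{t}_{\ii}^{K}$ already realises $\chi_{\sigma_1}\leq\chi_{\sigma_2}\leq\chi_{\sigma_3}$; otherwise the intermediate value theorem supplies a point of the segment at which $\chi_{\sigma_2}=\chi_{\sigma_3}$ while still $\chi_{\sigma_1}<\chi_{\sigma_3}$, again giving the desired weak ordering. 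Either way $\sigma\in\mathcal{B}$. Degenerate cases in which two stopping times coincide are handled directly by the tie-breaking convention in~\eqref{eq:202}, which is precisely the ordering of the corresponding products and therefore of the relevant $\chi$'s at $\mathbf{t}_{\ii}^{K}$.

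I expect the main obstacle to be exactly this boundary/overshoot effect, which is also what confines the argument to $d\leq3$. A single prefix length only certifies the order of the coordinates that its stopping threshold separates, so for $d=3$ there remains just one adjacent pair of undetermined order, which the one-parameter interpolation between two prefixes can always fix. For $d\geq4$ a single threshold leaves \emph{two} independent ambiguous pairs, and no one-dimensional interpolation can correct both simultaneously; this matches the four-dimensional example in which $\mathcal{B}\subsetneq\mathcal{A}$, and it is what makes $d\leq3$ the precise range in which the two sets coincide.
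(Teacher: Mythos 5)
Your proof takes a genuinely different route from the paper's. The paper works with the single prefix $K=L_{\ii}(r,\sigma_2)$ and asserts that the uniqueness convention behind \eqref{eq:202} already forces
\[
\prod_{\ell=1}^{K}\lambda_{i_\ell}^{(\sigma_3)}\;\leq\;\prod_{\ell=1}^{K}\lambda_{i_\ell}^{(\sigma_2)}\;\leq\;\prod_{\ell=1}^{K}\lambda_{i_\ell}^{(\sigma_1)},
\]
i.e.\ that the level-$K$ cylinder is itself $\sigma$-ordered, and then perturbs; you instead interpolate between the two empirical measures $\mathbf{a}=\mathbf{t}_{\ii}^{L_{\ii}(r,\sigma_3)}$ and $\mathbf{b}=\mathbf{t}_{\ii}^{K}$, using linearity of $\mathbf{p}\mapsto\chi_x(\mathbf{p})$ and the intermediate value theorem. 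In the case $L_{\ii}(r,\sigma_3)<L_{\ii}(r,\sigma_2)<L_{\ii}(r,\sigma_1)$ your argument is complete and correct, and it is in fact more careful than the paper's: when $L_{\ii}(r,\sigma_3)<L_{\ii}(r,\sigma_2)$ nothing forces the first inequality in the display above, since $\sigma_3$ may contract very little between its own stopping time and step $K$ and overtake $\sigma_2$'s product. This is precisely the ``overshoot'' you identify, and your two-point interpolation is designed to absorb it, whereas the paper's one-line justification does not.

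The genuine gap is your final claim that coinciding stopping times are ``handled directly by the tie-breaking convention''. That convention orders only the \emph{tied pair}, so in the case $L_{\ii}(r,\sigma_3)<L_{\ii}(r,\sigma_2)=L_{\ii}(r,\sigma_1)=K$ it yields $\chi_{\sigma_1}(\mathbf{b})\leq\chi_{\sigma_2}(\mathbf{b})$ but gives no control whatsoever on $\chi_{\sigma_3}(\mathbf{b})$, and the endpoint inequality $\chi_{\sigma_1}(\mathbf{b})\leq\chi_{\sigma_3}(\mathbf{b})$, on which your segment argument relies, can fail. Concretely, take two maps $u,v$ with
\[
(\lambda_u^{(1)},\lambda_u^{(2)},\lambda_u^{(3)})=(0.2,\,0.4,\,0.1),\qquad (\lambda_v^{(1)},\lambda_v^{(2)},\lambda_v^{(3)})=(0.4,\,0.19,\,0.89),
\]
placed in opposite corners of $[0,1]^3$ so that even the very strong SPPC holds, and let $\ii=uv\ldots$, $r=0.1$. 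Then $L_{\ii}(r,3)=1<L_{\ii}(r,1)=L_{\ii}(r,2)=2$, and since $0.2\cdot 0.4=0.08\geq 0.076=0.4\cdot 0.19$ the tie-break makes this cube $(1,2,3)$-ordered, so $(1,2,3)\in\mathcal{A}$. But $0.1\cdot 0.89=0.089>0.08$, so at $\mathbf{b}=\mathbf{t}_{\ii}^{2}$ one has $\chi_{3}(\mathbf{b})<\chi_{1}(\mathbf{b})<\chi_{2}(\mathbf{b})$ and your endpoint condition fails. Worse, writing $\mathbf{p}=(1-s,s)$, the inequality $\chi_1(\mathbf{p})<\chi_2(\mathbf{p})$ requires $s>0.482$ while $\chi_2(\mathbf{p})<\chi_3(\mathbf{p})$ requires $s<0.473$, so $(1,2,3)\notin\mathcal{B}$ by Lemma~\ref{lem:EquivCharacB}: in this configuration no interpolation, and indeed no argument, can close the gap, because with the tie-breaking rule of \eqref{eq:202} read literally the claimed equality $\mathcal{A}=\mathcal{B}$ itself fails. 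Note that the paper's own proof founders on exactly the same configuration (its displayed chain of inequalities is false for this example), so the defect lies in how ties are resolved in the definition of $\mathcal{A}$ rather than in your main construction; but as written, your dismissal of the tied case is not a proof, and it is the one case that genuinely cannot be repaired.
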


\begin{proof}
For $d=2$ the claim is automatic. For $d=3$, choose $\sigma\in\mathcal{A}$. Then there exists $\ii\in\Sigma$ and $r>0$ such that $L_{\ii}(r,\sigma_3)\leq L_{\ii}(r,\sigma_{2})\leq L_{\ii}(r,\sigma_1)$. We claim that the cylinder $f_{\ii|L_{\ii}(r,\sigma_{2})}([0,1]^d)$ is $\sigma$-ordered. Indeed, the way we have made the $\sigma$-ordering unique implies that 
\begin{equation*}
\prod_{\ell=1}^{L_{\ii}(r,\sigma_2)} \lambda_{i_{\ell}}^{(\sigma_3)} \leq \prod_{\ell=1}^{L_{\ii}(r,\sigma_2)} \lambda_{i_{\ell}}^{(\sigma_2)} \leq \prod_{\ell=1}^{L_{\ii}(r,\sigma_2)} \lambda_{i_{\ell}}^{(\sigma_1)},
\end{equation*}
which is equivalent to  $\chi_{\sigma_1}(\mathbf{p}) \leq \chi_{\sigma_2}(\mathbf{p}) \leq \chi_{\sigma_3}(\mathbf{p})$ with $\mathbf{p}=\mathbf{t}_{\ii}^{L_{\ii}(r,\sigma_2)}$. If the cylinder is not strictly $\sigma$-ordered, then based on the discussion before Lemma~\ref{lem:B=Aford23} one can construct a strictly $\sigma$-ordered cylinder from a small perturbation of $\mathbf{p}$.
\end{proof}

However, in four dimensions the inclusion $\mathcal{B}\subseteq\mathcal{A}$ can be strict. Our example relies on the following lemma.
\begin{lma}\label{lem:CondForD4ex}
Assume the sponge $F$ satisfying the SPPC is the attractor of an IFS consisting of two maps $f_1,f_2$ ordered $(2,1,3,4)$ and $(1,2,4,3)$, respectively. Then
\begin{equation}\label{eq:32}
(1,2,3,4)\in\mathcal{B} \;\Longleftrightarrow\; (2,1,4,3)\notin\mathcal{B} \;\Longleftrightarrow\; \frac{\log \big( \lambda_1^{(2)} / \lambda_1^{(1)} \big)}{\log \big( \lambda_1^{(3)} / \lambda_1^{(4)} \big)} < \frac{\log \big( \lambda_2^{(1)} / \lambda_2^{(2)} \big)}{\log \big( \lambda_2^{(4)} / \lambda_2^{(3)} \big)}.
\end{equation}
\end{lma}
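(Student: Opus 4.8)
The plan is to apply the characterisation of $\mathcal{B}$ from Lemma~\ref{lem:EquivCharacB}. Since the IFS has only two maps, a probability vector is $\mathbf{p}=(t,1-t)$ for $t\in[0,1]$, and for each coordinate $x$ the Lyapunov exponent
\[
\chi_x(t) = -t\log\lambda_1^{(x)} - (1-t)\log\lambda_2^{(x)}
\]
is an \emph{affine} function of $t$. Hence an ordering $\sigma$ lies in $\mathcal{B}$ precisely when the four lines $t\mapsto\chi_x(t)$, $x=1,2,3,4$, are arranged as $\chi_{\sigma_1}(t)<\cdots<\chi_{\sigma_4}(t)$ for some $t$. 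The prescribed orderings of $f_1$ and $f_2$ fix the arrangement at the endpoints: at $t=1$ it is $(2,1,3,4)$ and at $t=0$ it is $(1,2,4,3)$.

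First I would compare the two endpoint arrangements pair by pair. Reading them off shows that each of the pairs $\{1,3\},\{1,4\},\{2,3\},\{2,4\}$ has the same relative order at $t=0$ and at $t=1$; since two distinct lines cross at most once, such pairs never swap on $(0,1)$, so $\{1,2\}$ always occupy the two smallest exponents and $\{3,4\}$ the two largest. Only $\{1,2\}$ and $\{3,4\}$ reverse, each at a single crossing time $t_{12},t_{34}\in(0,1)$. Consequently the only strict orderings witnessed on the open interval, apart from the endpoint orderings, are $(1,2,3,4)$ (where $3,4$ have swapped but $1,2$ have not, i.e.\ on $(t_{34},t_{12})$) and $(2,1,4,3)$ (on $(t_{12},t_{34})$). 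At most one of these sub-intervals is non-empty, giving the dichotomy $(1,2,3,4)\in\mathcal{B}\Leftrightarrow t_{34}<t_{12}\Leftrightarrow (2,1,4,3)\notin\mathcal{B}$.

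It then remains to translate $t_{34}<t_{12}$ into~\eqref{eq:32}. Solving $\chi_1(t_{12})=\chi_2(t_{12})$ and $\chi_3(t_{34})=\chi_4(t_{34})$ gives
\[
t_{12}=\frac{\log(\lambda_2^{(1)}/\lambda_2^{(2)})}{\log(\lambda_1^{(2)}/\lambda_1^{(1)})+\log(\lambda_2^{(1)}/\lambda_2^{(2)})},\qquad t_{34}=\frac{\log(\lambda_2^{(4)}/\lambda_2^{(3)})}{\log(\lambda_1^{(3)}/\lambda_1^{(4)})+\log(\lambda_2^{(4)}/\lambda_2^{(3)})},
\]
where all four logarithms are positive owing to the prescribed orderings of $f_1$ and $f_2$. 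Writing $A_{12}=\log(\lambda_1^{(2)}/\lambda_1^{(1)})$, $B_{12}=\log(\lambda_2^{(1)}/\lambda_2^{(2)})$ and similarly $A_{34},B_{34}$, each $t_\bullet$ has the form $B/(A+B)$; cross-multiplying the positive denominators turns $t_{34}<t_{12}$ into $A_{12}B_{34}<A_{34}B_{12}$, that is $A_{12}/A_{34}<B_{12}/B_{34}$, which is exactly~\eqref{eq:32}.

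The step I expect to be the main obstacle is justifying the dichotomy at the boundary: when $t_{12}=t_{34}$ the two crossings coincide, the arrangement passes directly from $(1,2,4,3)$ to $(2,1,3,4)$, and \emph{neither} intermediate ordering is witnessed, so the biconditional must be read with the understanding that the inequality in~\eqref{eq:32} is strict (equality places neither ordering in $\mathcal{B}$). I would also need distinguishability~\eqref{eq:00} to rule out the crossings sitting at an endpoint, which would force one of $A_{12},B_{12},A_{34},B_{34}$ to vanish, so that $t_{12},t_{34}$ genuinely lie in the open interval; once this is in place, the perturbation remark preceding Lemma~\ref{lem:B=Aford23} lets one drop `strictly' and read off membership in $\mathcal{B}$ directly from the sign of $t_{12}-t_{34}$.
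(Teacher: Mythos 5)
Your proposal is correct and follows essentially the same route as the paper: both invoke Lemma~\ref{lem:EquivCharacB}, restrict $\mathcal{B}$ to the four orderings compatible with coordinates $3,4$ being dominated by $1,2$ (your endpoint/no-crossing argument is exactly this domination), and reduce everything to comparing two critical values of the weight on $f_1$ — your crossing times $t_{34},t_{12}$ are precisely the lower and upper endpoints of the paper's interval~\eqref{eq:33}. Your closing remark about the degenerate case $t_{12}=t_{34}$ is a genuine refinement the paper passes over silently: at equality neither $(1,2,3,4)$ nor $(2,1,4,3)$ lies in $\mathcal{B}$, so the first biconditional in~\eqref{eq:32} fails there and the chain of equivalences must be read with the inequality strict, exactly as you say.
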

\begin{proof}
Projection of $F$ onto the $(1,2)$-plane or the $(3,4)$-plane is a Bara\'nski carpet while coordinates $3$ and $4$ are dominated by coordinates $1$ and $2$. Therefore, $\mathcal{B}\subseteq\{(2,1,3,4),(1,2,4,3),(2,1,4,3),(1,2,3,4)\}$.

Lemma~\ref{lem:EquivCharacB} implies that $(1,2,3,4)\in\mathcal{B}$ if and only if there exists $\mathbf{p}=(p,1-p)$ such that $\chi_{1}(\mathbf{p}) < \chi_{2}(\mathbf{p}) < \chi_{3}(\mathbf{p}) < \chi_{4}(\mathbf{p})$. Notice that $\chi_{2}(\mathbf{p}) < \chi_{3}(\mathbf{p})$ for any $p$ because coordinate $2$ dominates coordinate $3$. From the other two inequalities $\chi_{1}(\mathbf{p}) < \chi_{2}(\mathbf{p})$ and $\chi_{3}(\mathbf{p}) < \chi_{4}(\mathbf{p})$, we can express $p$ to obtain
\begin{equation}\label{eq:33}
\frac{\log \big( \lambda_2^{(4)} / \lambda_2^{(3)} \big)}{\log \big( (\lambda_2^{(4)}\lambda_1^{(3)}) / (\lambda_2^{(3)}\lambda_1^{(4)}) \big)} < p < 
\frac{\log \big( \lambda_2^{(1)} / \lambda_2^{(2)} \big)}{\log \big( (\lambda_2^{(1)}\lambda_1^{(2)}) / (\lambda_2^{(2)}\lambda_1^{(1)}) \big)}.
\end{equation} 
Straightforward algebraic manipulations show that this is a non-empty interval if and only if the condition on the right hand side of~\eqref{eq:32} holds.

Similarly, $(2,1,4,3)\in\mathcal{B}$ if and only if there exists $\mathbf{p}=(p,1-p)$ such that $\chi_{2}(\mathbf{p}) < \chi_{1}(\mathbf{p}) < \chi_{4}(\mathbf{p}) < \chi_{3}(\mathbf{p})$. This gives the same condition for $p$ as in~\eqref{eq:33} with the inequality signs reversed, which is equivalent to the reversed inequality in~\eqref{eq:32}.
\end{proof}
 
\begin{prop}\label{prop:exd4}
There exists a sponge in four dimensions satisfying the very strong SPPC for which $\mathcal{B}\subset \mathcal{A}$.  Nonetheless, $\max_{\sigma\in\mathcal{B}}\, \overline{S}(\mathbf{p},\sigma) = \max_{\sigma\in\mathcal{A}}\, \overline{S}(\mathbf{p},\sigma)$.
\end{prop}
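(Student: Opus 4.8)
The plan is to exhibit a single explicit IFS of the type in Lemma~\ref{lem:CondForD4ex}, read off $\mathcal{A}$ and $\mathcal{B}$ from the contraction ratios alone, and then observe that the equality of the two maxima is forced by the special form $\overline{S}(\mathbf{p},\sigma)$ takes for this example. First I would fix two diagonal maps $f_1,f_2$ whose contraction vectors are \emph{strictly} ordered as $(2,1,3,4)$ and $(1,2,4,3)$, and choose the ratios so that the strict inequality on the right of~\eqref{eq:32} holds; by Lemma~\ref{lem:CondForD4ex} this places $(1,2,3,4)\in\mathcal{B}$ and $(2,1,4,3)\notin\mathcal{B}$. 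The translations I would choose so that the two image cubes are disjoint in every one of the four coordinates. Then each principal projection of the two cubes is disjoint, so the very strong SPPC holds and, crucially, there are no exact overlaps on any $E_n^{\sigma}$, whence $\mathcal{I}_n^{\sigma}=\mathcal{I}$ for all $n$ and all $\sigma$.

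With the ratios fixed, $\mathcal{B}$ is determined: the two single-map orderings $(2,1,3,4)$ and $(1,2,4,3)$ lie in $\mathcal{B}$ (take $\mathbf{p}$ near a vertex of the simplex in Lemma~\ref{lem:EquivCharacB}), and by~\eqref{eq:32} exactly one of the two `mixed' orderings does, so $\mathcal{B}=\{(2,1,3,4),(1,2,4,3),(1,2,3,4)\}$. Since coordinates $3$ and $4$ are dominated by $1$ and $2$, any $\sigma\in\mathcal{A}$ must begin with a permutation of $\{1,2\}$ and end with one of $\{3,4\}$, so $\mathcal{A}\subseteq\{(1,2,3,4),(1,2,4,3),(2,1,3,4),(2,1,4,3)\}$. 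To obtain $\mathcal{B}\subsetneq\mathcal{A}$ it therefore suffices to show $(2,1,4,3)\in\mathcal{A}$, i.e. to produce $\ii$ and $r$ with $L_{\ii}(r,3)\le L_{\ii}(r,4)\le L_{\ii}(r,1)\le L_{\ii}(r,2)$.

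This is the main obstacle and is precisely where cubes outperform cylinders: the four stopping times read the empirical frequency of $\ii$ at four different prefix lengths, so an inhomogeneous word can realise an ordering that no single frequency (hence no cylinder, by Lemma~\ref{lem:EquivCharacB}) achieves. The plan is to take a two-block word $\ii=2^{m_1}1^{m_2}\cdots$. Within the first block the local ordering is that of $f_2$, so coordinate $3$ (the fastest) reaches scale $r$ before coordinate $4$, giving $L_{\ii}(r,3)\le L_{\ii}(r,4)$; choosing $r$ in the range where only this fast pair has stopped by step $m_1$, the slow pair stops inside the second block, where coordinate $1$ contracts faster than coordinate $2$ and, after enough copies of $1$, overtakes it, giving $m_1<L_{\ii}(r,1)\le L_{\ii}(r,2)$. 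I would then check that $m_1,m_2$ and $r$ can be chosen to meet all four inequalities at once; this reduces to a single open condition on the ratios, which I would build into the initial choice of the $\lambda_i^{(n)}$, the ample freedom there allowing it to coexist with the ordering constraints and~\eqref{eq:32}.

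Finally, the equality of the maxima is immediate once the separation is in place. Because $\mathcal{I}_n^{\sigma}=\mathcal{I}$ and the index projections are trivial, the conditional factors satisfy $P_{n-1}^{\sigma}(i)=1$ for $n\ge 2$ and $P_0^{\sigma}(i)=p(i)$, so $\overline{S}(\mathbf{p},\sigma)=\max_{i}\frac{\log p(i)}{\log\lambda_i^{(\sigma_1)}}$ depends only on the leading coordinate $\sigma_1$. The extra ordering $(2,1,4,3)\in\mathcal{A}\setminus\mathcal{B}$ has the same leading coordinate as $(2,1,3,4)\in\mathcal{B}$, and both values $\sigma_1\in\{1,2\}$ already occur among the orderings in $\mathcal{B}$; hence $\max_{\sigma\in\mathcal{B}}\overline{S}(\mathbf{p},\sigma)=\max_{\sigma\in\mathcal{A}}\overline{S}(\mathbf{p},\sigma)$ for every $\mathbf{p}$, as required. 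The only genuine work thus sits in the cube construction above; were one to prefer a non-separated example, one would keep the same $\mathcal{A}$ and $\mathcal{B}$ and instead verify the inequality $\overline{S}(\mathbf{p},(2,1,4,3))\le\max_{\sigma\in\mathcal{B}}\overline{S}(\mathbf{p},\sigma)$ termwise from the explicit formula.
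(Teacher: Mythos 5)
Your proposal is correct, and it is essentially the paper's own proof run in mirror image: a two-map IFS as in Lemma~\ref{lem:CondForD4ex}, that lemma pinning down $\mathcal{B}$, a two-block word whose stopping times witness a cube ordering outside $\mathcal{B}$, and the equality of the maxima read off from the structure of $\overline{S}(\mathbf{p},\sigma)$. Two points of comparison are worth recording. First, you impose the inequality in~\eqref{eq:32}, so that $(1,2,3,4)\in\mathcal{B}$ and the extra ordering to be produced is $(2,1,4,3)$, whereas the paper asserts its parameters violate~\eqref{eq:32} and takes $(1,2,3,4)$ as the extra ordering. In fact the paper's parameters \emph{satisfy}~\eqref{eq:32}, since $\log 2/\log 4=1/2<1=\log 2/\log 2$; consequently $(1,2,3,4)\in\mathcal{B}$ for them (the word $1^42^6$ determines a strictly $(1,2,3,4)$-ordered cylinder), and the ordering in $\mathcal{A}\setminus\mathcal{B}$ is $(2,1,4,3)$, witnessed for instance by $\ii=2^{10}1^{\infty}$ and $r=10^{-13}$, which give $L_{\ii}(r,3)=13<L_{\ii}(r,4)=14<L_{\ii}(r,1)=26<L_{\ii}(r,2)=30$. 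So your orientation is the one that actually works with the paper's numbers, and your argument repairs this slip. Second, your justification of $\max_{\sigma\in\mathcal{B}}\overline{S}(\mathbf{p},\sigma)=\max_{\sigma\in\mathcal{A}}\overline{S}(\mathbf{p},\sigma)$ is the correct content behind the paper's phrase ``a simple application of Theorem~\ref{thm:dimMeasure}'': with the images separated in every coordinate there are no exact overlaps, so $P_0^{\sigma}(i)=p(i)$ and $P_{n-1}^{\sigma}(i)=1$ for $n\geq 2$, hence $\overline{S}(\mathbf{p},\sigma)=\max_{i}\log p(i)/\log\lambda_i^{(\sigma_1)}$ depends only on $\sigma_1$, and both leading coordinates $1$ and $2$ already occur in $\mathcal{B}$. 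One caveat on your cube construction: requiring the fast pair to have stopped ``by step $m_1$'' is stronger than necessary and can fail for admissible ratios (it fails for the paper's); what is actually needed is that coordinates $3,4$ stop before their running products cross over while coordinates $1,2$ stop after theirs, and the open condition on the ratios you invoke does secure this --- for example, shrinking all of $\lambda_i^{(3)},\lambda_i^{(4)}$ by a common factor preserves the two cylinder orderings, the domination of coordinates $3,4$, and~\eqref{eq:32}, while making the fast pair stop as early as desired.
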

\begin{proof}
The example consists of just two maps. Map $f_1$ is $(2,1,3,4)$-ordered with
\begin{equation*}
\big(\lambda_1^{(1)},\lambda_1^{(2)},\lambda_1^{(3)},\lambda_1^{(4)}\big) = (0.2, 0.4, 0.08, 0.02)  
\end{equation*}
and $f_2$ is $(1,2,4,3)$-ordered with
\begin{equation*}
 \big(\lambda_2^{(1)},\lambda_2^{(2)},\lambda_2^{(3)},\lambda_2^{(4)}\big) = (0.6, 0.3, 0.1, 0.2).
\end{equation*}	
The translations can clearly be chosen so that the very strong SPPC holds, moreover, $\mathcal{A}\subseteq\{(2,1,3,4),(1,2,4,3),(2,1,4,3),(1,2,3,4)\}$ for the same reason as in the proof of Lemma~\ref{lem:CondForD4ex}.

Our first claim is that $(1,2,3,4)\in\mathcal{A}$. Some calculations show that choosing $r=5\times 10^{-5}$ and $\ii=11112222222\ldots$ yields
\begin{equation*}
L_{\ii}(r,4) =3 < L_{\ii}(r,3) =4 < L_{\ii}(r,2) =10 < L_{\ii}(r,1) =11.
\end{equation*}
It is also easy to check that the parameters do not satisfy the condition on the right hand side of~\eqref{eq:32}, hence, $(1,2,3,4)\notin\mathcal{B}$ by Lemma~\ref{lem:CondForD4ex}. A simple application of Theorem~\ref{thm:dimMeasure} shows that  $\max_{\sigma\in\mathcal{B}}\, \overline{S}(\mathbf{p},\sigma) = \max_{\sigma\in\mathcal{A}}\, \overline{S}(\mathbf{p},\sigma)$.
\end{proof}

\section{Examples}\label{sec:examples}

\subsection{Planar Bara\'nski carpets with different behaviour}\label{sec:Bexamples}

The Assouad dimension of planar Bara\'nski carpets $F$ was determined by Fraser~\cite{Fraser_TAMS2014}. Using our Theorem~\ref{thm:dimMeasure}, we can check whether $\dim_{\mathrm{A}}F=\dim_{\mathrm{A}}\nu_{\mathbf{p}}$ for some self-affine measure $\nu_{\mathbf{p}}$ or if there is a dimension gap in the sense of~\eqref{eq:28}. Surprisingly, both behaviours are witnessed by simple families of examples. Recall, in the Lalley--Gatzouras class $\dim_{\mathrm{A}}F$ is always achieved by the (only) coordinate-wise natural measure.

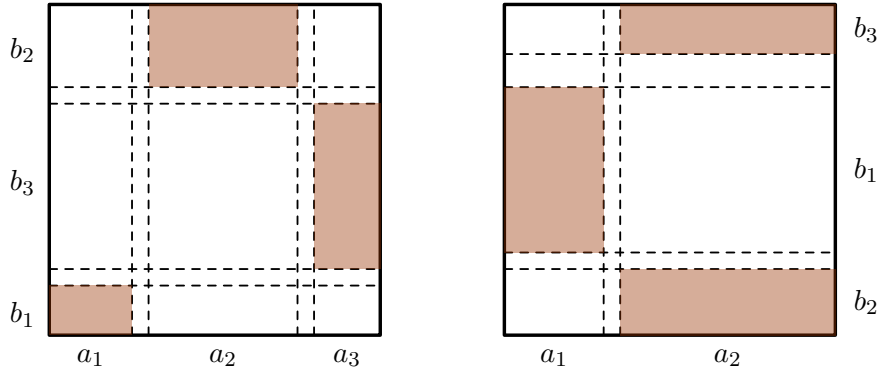
\begin{figure}[ht]
\centering
	
\begin{tikzpicture}[line cap=round,line join=round,x=1cm,y=1cm,scale=1.1]
	\draw[line width=1.3pt] (-5,1) rectangle (-1,5);
	\foreach \y in {1.6,1.8,3.8,4}{
		\draw[dashed, line width=0.7pt] (-5,\y)-- (-1,\y);
	}
	\foreach \x in {-4,-3.8,-2,-1.8}{
		\draw[dashed, line width=0.7pt] (\x,1)-- (\x,5);
	}		
	\draw (-4.5,0.95) node[anchor=north] {$a_1$};
	\draw (-2.9,0.95) node[anchor=north] {$a_2$};
	\draw (-1.4,0.95) node[anchor=north] {$a_3$};
	\draw (-5.05,1.25) node[anchor=east] {$b_1$};
	\draw (-5.05,2.85) node[anchor=east] {$b_3$};
	\draw (-5.05,4.45) node[anchor=east] {$b_2$};
	\fill[line width=1pt,color=zzttqq,fill=zzttqq,fill opacity=0.4] (-5,1) -- (-4,1) -- (-4,1.6) -- (-5,1.6) -- cycle;
	\fill[line width=1pt,color=zzttqq,fill=zzttqq,fill opacity=0.4] (-1.8,1.8) -- (-1,1.8) -- (-1,3.8) -- (-1.8,3.8) -- cycle;
	\fill[line width=1pt,color=zzttqq,fill=zzttqq,fill opacity=0.4] (-3.8,4) -- (-2,4) -- (-2,5) -- (-3.8,5) -- cycle;
		
	\begin{scope}[xshift=-0.5cm]
		\draw[line width=1.3pt] (1,1) rectangle (5,5);	
		\foreach \y in {1.8,2,4,4.4}{
			\draw[dashed, line width=0.7pt] (1,\y)-- (5,\y);
		}
		\foreach \x in {2.2,2.4}{
			\draw[dashed, line width=0.7pt] (\x,1)-- (\x,5);
		}		
		\draw (1.6,0.95) node[anchor=north] {$a_1$};
		\draw (3.7,0.95) node[anchor=north] {$a_2$};
		\draw (5.1,3) node[anchor=west] {$b_1$};
		\draw (5.1,1.4) node[anchor=west] {$b_2$};
		\draw (5.1,4.7) node[anchor=west] {$b_3$};
		\fill[line width=1pt,color=zzttqq,fill=zzttqq,fill opacity=0.4] (2.4,1) -- (5,1) -- (5,1.8) -- (2.4,1.8) -- cycle;
		\fill[line width=1pt,color=zzttqq,fill=zzttqq,fill opacity=0.4] (1,2) -- (2.2,2) -- (2.2,4) -- (1,4) -- cycle;
		\fill[line width=1pt,color=zzttqq,fill=zzttqq,fill opacity=0.4] (2.4,4.4) -- (5,4.4) -- (5,5) -- (2.4,5) -- cycle;
	\end{scope}
\end{tikzpicture}
\caption{Defining maps for a Bara\'nski carpet with strictly positive dimension gap (left), and where the Assouad dimension of $F$ is attained for correctly chosen parameters (right).}
\label{fig:PlanarBaranski}
\end{figure}

Our first example shows a positive dimension gap. Let $F$ be a Bara\'nski carpet which is not in the Lalley--Gatzouras class that satisfies the very strong SPPC with its first level cylinders arranged in a way that there is \emph{no} exact overlap when projecting to either coordinate axis, see left hand side of Figure~\ref{fig:PlanarBaranski} for an example. In particular, this contains all genuine Bara\'nski carpets defined by two maps. Let $a_i=\lambda_i^{(1)}$ and $b_i=\lambda_i^{(2)}$, moreover, define $s$ and $t$ to be the unique solutions to the equations
\begin{equation*}
\sum_{i\in\mathcal{I}} a_i^s=1 \;\;\text{ and }\;\; \sum_{i\in\mathcal{I}} b_i^t=1.
\end{equation*}
Without loss of generality we assume that $t\leq s$. The very strong SPPC implies that $s<1$. The formula from~\cite{Fraser_TAMS2014} shows that $\dim_{\mathrm{L}}F=t\leq s=\dim_{\mathrm{A}}F$.

\begin{prop}\label{prop:ex1}
For a Bara\'nski carpet $F$ described above there is a strictly positive dimension gap, i.e. there exists $\delta_F>0$ such that 
\begin{equation*}
\inf_{\mathbf{p}} \dim_{\mathrm{A}}\nu_{\mathbf{p}} \geq \dim_{\mathrm{A}}F + \delta_F.
\end{equation*}
\end{prop}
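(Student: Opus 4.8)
The plan is to feed the structure of these carpets into Theorem~\ref{thm:dimMeasure} and reduce the whole statement to an elementary optimisation over probability vectors. Since $d=2$, Lemma~\ref{lem:B=Aford23} gives $\mathcal{A}=\mathcal{B}$, and because $F$ is not in the Lalley--Gatzouras class neither coordinate dominates, so both orderings are witnessed and $\mathcal{A}=\mathcal{B}=\{(1,2),(2,1)\}$. The no-overlap hypothesis says that no pair $f_i,f_j$ overlaps exactly on either coordinate axis, so in the construction of the index sets nothing is ever removed: $\mathcal{I}_1^{\sigma}=\mathcal{I}_2^{\sigma}=\mathcal{I}$ for each $\sigma$, and every projection $\Pi_n^{\sigma}$ is the identity. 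Consequently every fibre is a singleton, and the conditional probabilities degenerate to $P_0^{\sigma}(i)=p_1^{\sigma}(i)=p(i)$ and $P_1^{\sigma}(i)=p(i)/p(i)=1$ for all $i$.

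First I would evaluate $\overline{S}(\mathbf{p},\sigma)$ in this degenerate setting. The $n=2$ summand vanishes because $\log P_1^{\sigma}(i)=\log 1=0$, while the $n=1$ summand is $\max_{i}\log p(i)/\log\lambda_i^{(\sigma_1)}$. Writing $a_i=\lambda_i^{(1)}$ and $b_i=\lambda_i^{(2)}$, this gives $\overline{S}(\mathbf{p},(1,2))=\max_i \log p(i)/\log a_i$ and $\overline{S}(\mathbf{p},(2,1))=\max_i \log p(i)/\log b_i$, so by Theorem~\ref{thm:dimMeasure} (using $\mathcal{A}=\mathcal{B}$),
\begin{equation*}
\dim_{\mathrm{A}}\nu_{\mathbf{p}}=\max_{i\in\mathcal{I}}\max\Big\{\tfrac{\log p(i)}{\log a_i},\tfrac{\log p(i)}{\log b_i}\Big\}=\max_{i\in\mathcal{I}}\frac{\log p(i)}{\log c_i},\qquad c_i:=\max\{a_i,b_i\},
\end{equation*}
where the last equality uses $\log p(i)<0$, so the larger of the two contraction ratios produces the larger quotient.

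Next I would minimise over $\mathbf{p}$. If $\dim_{\mathrm{A}}\nu_{\mathbf{p}}=v$ then $\log p(i)\ge v\log c_i$, i.e. $p(i)\ge c_i^{\,v}$, for every $i$; summing yields $1\ge\sum_i c_i^{\,v}$. Since $u\mapsto\sum_i c_i^{\,u}$ is strictly decreasing and equals $1$ at the unique exponent $u^{*}$ determined by $\sum_i c_i^{\,u^{*}}=1$, we get $v\ge u^{*}$, with equality for the balanced vector $p(i)=c_i^{\,u^{*}}$; hence $\inf_{\mathbf{p}}\dim_{\mathrm{A}}\nu_{\mathbf{p}}=u^{*}$. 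The dimension gap is then a strict comparison: because $F$ is genuinely Bara\'nski there is some index $i$ with $b_i>a_i$, so $c_i^{\,s}=b_i^{\,s}>a_i^{\,s}$ and therefore $\sum_i c_i^{\,s}>\sum_i a_i^{\,s}=1=\sum_i c_i^{\,u^{*}}$; strict monotonicity forces $u^{*}>s=\dim_{\mathrm{A}}F$, and setting $\delta_F:=u^{*}-s>0$ finishes the proof. I expect the only genuinely delicate point to be the evaluation of $\overline{S}$ in the no-overlap case — one must check carefully that the fibre terms really collapse to zero and that the two orderings reduce exactly to the two coordinate projections; once this is secured, the optimisation and the strict inequality are routine.
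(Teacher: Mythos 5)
Your proof is correct, and its first half coincides with the paper's: both use the no-overlap hypothesis to collapse the fibres (so that $P_0^{\sigma}(i)=p(i)$ and $P_1^{\sigma}(i)=1$), note that $\mathcal{A}=\mathcal{B}=\{(1,2),(2,1)\}$ since $F$ is not Lalley--Gatzouras, and apply Theorem~\ref{thm:dimMeasure} to obtain $\dim_{\mathrm{A}}\nu_{\mathbf{p}}=\max_{i}\max\bigl\{\log p(i)/\log a_i,\ \log p(i)/\log b_i\bigr\}$. Where you genuinely depart from the paper is in how the gap is then extracted. The paper argues by perturbation and cases: it fixes $0<\varepsilon<b_\ell^s-a_\ell^s$ for an index $\ell$ with $b_\ell>a_\ell$, treats separately the case $p(i)\leq a_i^s+\varepsilon$ for all $i$ (where the index $\ell$ pushes the dimension above $s$ through the $b$-coordinate) and the complementary case (where a pigeonhole argument yields $k$ with $p(k)\leq a_k^s-\varepsilon/(N-1)$, pushing the dimension above $s$ through the $a$-coordinate), and takes $\delta_F$ to be the minimum of two explicit but non-sharp quantities. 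You instead merge the two quotients into $\log p(i)/\log c_i$ with $c_i=\max\{a_i,b_i\}$ and solve the min-max problem exactly: $\inf_{\mathbf{p}}\dim_{\mathrm{A}}\nu_{\mathbf{p}}=u^*$ where $\sum_i c_i^{u^*}=1$, the infimum being attained at $p(i)=c_i^{u^*}$, and $u^*>s$ follows from strict monotonicity of $u\mapsto\sum_i c_i^u$ because $c_i>a_i$ for at least one $i$. Your route buys strictly more than the paper's: it identifies the sharp (largest possible) gap $\delta_F=u^*-s$, exhibits the minimising self-affine measure explicitly, and dispenses with the $\varepsilon$ bookkeeping; the paper's case analysis only needs (and only delivers) a qualitative gap. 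The two points you flag as delicate are indeed the ones to check, and both hold exactly as you say: the fibre terms vanish because no indices are ever removed in forming $\mathcal{I}_1^{\sigma}$, and both orderings lie in $\mathcal{B}$ by Lemma~\ref{lem:EquivCharacB} applied to probability vectors concentrated near the indices witnessing $b_i>a_i$ and $a_j>b_j$ respectively.
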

\begin{proof}
The condition that there is no exact overlap when projecting to either coordinate axis implies that $p_1^{\sigma}(i)=p(i)$ and so $P_1^{\sigma}(i)=1$ for all $i\in\mathcal{I}=\{1,\ldots,N\}$. Applying Theorem~\ref{thm:dimMeasure}, we immediately obtain
\begin{equation*}
\dim_{\mathrm{A}}\nu_{\mathbf{p}} = \max\Big\{ \frac{\log p(i)}{\log a_i}, \frac{\log p(i)}{\log b_i}:\, i\in\mathcal{I} \Big\}.
\end{equation*}

Since $F$ is not in the Lalley--Gatzouras class and $s\geq t$, there exists $\ell\in\mathcal{I}$ such that $b_{\ell}^s>a_{\ell}^s$. Fix $0<\varepsilon<b_{\ell}^s-a_{\ell}^s$ and first consider any $\mathbf{p}$ that satisfies $p(i)\leq a_i^s+\varepsilon$ for every $i\in\mathcal{I}$. Then
\begin{equation*}
\dim_{\mathrm{A}}\nu_{\mathbf{p}} \geq \frac{\log p(\ell)}{\log b_{\ell}} \geq \frac{\log (a_{\ell}^s+\varepsilon)}{\log b_{\ell}} > s
\end{equation*}
by the choice of $\varepsilon$.

Now assume that $\mathbf{p}$ is such that there exists $j\in\mathcal{I}$ satisfying $p(j)>a_j^s+\varepsilon$. Since $\sum_{i\in\mathcal{I}} a_i^s=1$, the pigeon hole principle implies that there exists $k\in\mathcal{I}$ such that $0<p(k)\leq a_k^s-\varepsilon/(N-1)$. Using this particular index,
\begin{equation*}
\dim_{\mathrm{A}}\nu_{\mathbf{p}} \geq \frac{\log p(k)}{\log a_{k}} \geq \frac{\log\big( a_k^s -\varepsilon/(N-1)\big)}{\log a_k} = s+ \frac{\log\big( (1- \varepsilon\cdot a_k^{-s}/(N-1) )\big)}{\log a_k} > s.
\end{equation*}
Therefore, choosing
\begin{equation*}
\delta_F \coloneqq \min \Big\{ \frac{\log (a_{\ell}^s+\varepsilon)}{\log b_{\ell}} - s, \frac{\log\big( (1-\varepsilon\cdot a_k^{-s}/(N-1))\big)}{\log a_k} \Big\}
\end{equation*}
completes the proof.
\end{proof}

Proposition~\ref{prop:ex1} shows that if a genuine Bara\'nski carpet whose Assouad dimension is realised by a self-affine measure exists, then its defining IFS must have at least three maps. Our second example shows that such a carpet does exist using only three maps. Giving a complete characterisation for Bara\'nski carpets with three maps seems possible but perhaps tedious. However,  it is straightforward to give an easy to check sufficient condition (valid for all Bara\'nski carpets) ensuring that the Assouad dimension of the carpet is attained by a Bernoulli measure.  Comparing the formula from~\cite{Fraser_TAMS2014} with Theorem~\ref{thm:dimMeasure} shows that $\dim_{\mathrm{A}}F = \max_{\sigma\in\mathcal{A}}\overline{S}\big(\mathbf{q}^{\sigma} ,\sigma\big)$. Therefore, if $\sigma$ satisfies that $\overline{S}\big(\mathbf{q}^{\sigma} ,\sigma\big)\geq \max\big\{ \overline{S}\big(\mathbf{q}^{\omega} ,\omega\big),\, \overline{S}\big(\mathbf{q}^{\sigma} ,\omega\big) \big\}$, then $\dim_{\mathrm{A}}F = \dim_{\mathrm{A}} \nu_{\mathbf{q}^{\sigma}}$.

To demonstrate this, consider the Bara\'nski carpet whose first level cylinders are depicted with the three shaded rectangles on the right hand side of Figure~\ref{fig:PlanarBaranski}. To ensure the attractor is a genuine Bara\'nski carpet, assume $a_1<b_1$ and $a_2 > \min\{b_2,b_3\}$.  Define $r,s,t$ as follows: $b_2^r+b_3^r=1,\, a_1^s+a_2^s=1$ and $b_1^t+b_2^t+b_3^t=1$. We assume $\max\{s,t\}<1$ so that the maps can be arranged in a way that satisfies the very strong SPPC. It follows from the formulas in~\cite{Fraser_TAMS2014} that $\dim_{\mathrm{A}}F = \max\{s+r,t\}$.
\begin{prop}\label{prop:ex2}
Consider a Bara\'nski carpet as on the right hand side of Figure~\ref{fig:PlanarBaranski}. Assume $s+r> t$. If $a_2\geq \max\{b_2,b_3\}$ and $b_1^{1+r/s}\leq a_1<b_1$, then $\dim_{\mathrm{A}} F= \dim_{\mathrm{A}} \nu_{\mathbf{q}^{\sigma}}$ for $\sigma=(1,2)$.
\end{prop}

\begin{proof}
Let $\sigma=(1,2)$ and $\omega=(2,1)$. The vector $\mathbf{q}^{\sigma}$ is
\begin{equation*}
	q(1)=a_1^s,\; q(2) = a_2^s\cdot b_2^r \;\;\text{ and }\;\; q(3) = a_2^s\cdot b_3^r.
\end{equation*}
A simple calculation gives $\overline{S}\big(\mathbf{q}^{\sigma} ,\sigma\big)=s+r=\dim_{\mathrm{A}} F$, because we assume $s+r>t$. Hence, it is enough to check when $\overline{S}\big(\mathbf{q}^{\sigma}, \omega\big)\leq s+r$. Another calculation yields
\begin{align*}
	\overline{S}\big(\mathbf{q}^{\sigma}, \omega\big) &= \max_{i\in\mathcal{I}_1^{\omega}} \frac{ \log P_{0}^{\omega}(i) }{ \log \lambda_{i}^{(\omega_1)} } + \max_{i\in\mathcal{I}_2^{\omega}} \frac{ \log P_{1}^{\omega}(i) }{ \log \lambda_{i}^{(\omega_2)} } \\
	&=  \max\Big\{ s\cdot\frac{\log a_1}{\log b_1},\, s\cdot\frac{\log a_2}{\log b_2} +r,\, s \cdot\frac{\log a_2}{\log b_3} +r \Big\} + 0.
\end{align*}
The last two terms are at most $s+r$ if and only if $a_2\geq \max\{b_2,b_3\}$ and  the first term is at most $s+r$ if and only if $b_1^{1+r/s}\leq a_1$, completing the proof.
\end{proof}

\subsection{Characterisation of SPPC in dimension three}\label{sec:SPPCdim3}

Recall from Lemma~\ref{lem:B=Aford23} that $\mathcal{A}=\mathcal{B}$ in $d=3$ and notation $y\prec x$ from~\eqref{eq:300}. If no coordinate dominates any of the other, then (recall) $F$ is a genuine Bara\'nski sponge and projection to any of the three principal planes is a Bara\'nski carpet. For genuine Bara\'nski sponges with $d=3$, it is  not necessarily true  that $\mathcal{B}=\mathcal{S}_3$.  For example, take an IFS consisting of two maps with $\lambda_1^{(3)}<\lambda_1^{(2)}<\lambda_1^{(1)}$ and $\lambda_2^{(1)}<\lambda_2^{(2)}<\lambda_2^{(3)}$. Depending on these parameters, only one of the orderings $(1,3,2)$ and $(2,3,1)$ is an element of $\mathcal{B}$ because there is no $\mathbf{p}\in\mathcal{P}_{\mathcal{I}}$ which simultaneously satisfies $\chi_{1}(\mathbf{p}) < \chi_{3}(\mathbf{p}) < \chi_{2}(\mathbf{p})$ and $\chi_{2}(\mathbf{p}) < \chi_{3}(\mathbf{p}) < \chi_{1}(\mathbf{p})$. With an analogous reasoning, either $(2,1,3)\in\mathcal{B}$ or $(3,1,2)\in\mathcal{B}$. 

Assuming that $y\prec x$, we have $\mathcal{A}\subseteq\{(x,y,z), (x,z,y), (z,x,y)\}$. Hence, projection onto $yz$-plane never plays a role. If $\#\mathcal{A}=1$, then $F$ is a Lalley--Gatzouras sponge. There are potentially three possibilities for $\#\mathcal{A}=2$:
\begin{enumerate}
\item $\mathcal{A}=\{(x,y,z),(x,z,y)\}$, i.e. $\max\{y,z\}\prec x$. In this case, the projection onto both the $xy$ and $xz$-planes are Lalley--Gatzouras carpets with $x$ being the dominant side.
\item $\mathcal{A}=\{(x,z,y),(z,x,y)\}$, i.e. $y\prec\min\{x,z\}$. In this case, projection onto the $xz$-plane is a Bara\'nski carpet and $y\prec\min\{x,z\}$ implies that projection onto either the $xy$-plane or $yz$-plane can be arbitrary.
\end{enumerate}
The third option is not possible due to the following.
\begin{prop}\label{prop:ex3}
Let $F$ be a three dimensional sponge that satisfies the SPPC, $y\prec x$ and $(x,y,z),(z,x,y)\in\mathcal{A}$. Then also $(x,z,y)\in\mathcal{A}$.
\end{prop}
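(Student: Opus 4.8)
The plan is to translate the whole statement into assertions about orderings of Lyapunov exponents and then run a one-parameter interpolation argument on the probability simplex $\mathcal{P}_{\mathcal{I}}$. Since $d=3$, Lemma~\ref{lem:B=Aford23} gives $\mathcal{A}=\mathcal{B}$, so it suffices to prove $(x,z,y)\in\mathcal{B}$, and by the characterisation in Lemma~\ref{lem:EquivCharacB} this amounts to producing a single $\mathbf{r}\in\mathcal{P}_{\mathcal{I}}$ with $\chi_x(\mathbf{r})<\chi_z(\mathbf{r})<\chi_y(\mathbf{r})$. Read through the same lemma, the hypotheses hand us two witnesses: a vector $\mathbf{p}$ with $\chi_x(\mathbf{p})<\chi_y(\mathbf{p})<\chi_z(\mathbf{p})$ (from $(x,y,z)\in\mathcal{A}=\mathcal{B}$) and a vector $\mathbf{q}$ with $\chi_z(\mathbf{q})<\chi_x(\mathbf{q})<\chi_y(\mathbf{q})$ (from $(z,x,y)\in\mathcal{A}=\mathcal{B}$).

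First I would introduce the two affine functions $g(\mathbf{s})\coloneqq\chi_z(\mathbf{s})-\chi_x(\mathbf{s})$ and $h(\mathbf{s})\coloneqq\chi_z(\mathbf{s})-\chi_y(\mathbf{s})$ on the simplex and record their signs at the endpoints: the displayed inequalities give $g(\mathbf{p})>0$, $h(\mathbf{p})>0$ and $g(\mathbf{q})<0$, $h(\mathbf{q})<0$. The goal $\chi_x(\mathbf{r})<\chi_z(\mathbf{r})<\chi_y(\mathbf{r})$ is exactly the pair of conditions $g(\mathbf{r})>0$ and $h(\mathbf{r})<0$. I would then travel along the segment $\mathbf{r}_t\coloneqq(1-t)\mathbf{p}+t\mathbf{q}$ for $t\in[0,1]$; since $g$ and $h$ are affine in $t$ and change sign from positive to negative, each has a unique zero $t_g,t_h\in(0,1)$, with $g(\mathbf{r}_t)>0$ for $t<t_g$ and $h(\mathbf{r}_t)<0$ for $t>t_h$.

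The crux is to show $t_h<t_g$, so that the interval $(t_h,t_g)$ is non-empty and any $t$ in it yields the required $\mathbf{r}_t$. This is where the hypothesis $y\prec x$ enters decisively: the difference $g-h=\chi_y-\chi_x$ is affine and, being strictly positive at both $\mathbf{p}$ and $\mathbf{q}$ (each witness satisfies $\chi_x<\chi_y$), it is strictly positive along the whole segment. Hence at $t=t_h$, where $h(\mathbf{r}_{t_h})=0$, we get $g(\mathbf{r}_{t_h})=(g-h)(\mathbf{r}_{t_h})>0$, which forces $t_h<t_g$. Choosing any $t\in(t_h,t_g)$ then produces $\mathbf{r}=\mathbf{r}_t$ with $\chi_x(\mathbf{r})<\chi_z(\mathbf{r})<\chi_y(\mathbf{r})$, so $(x,z,y)\in\mathcal{B}=\mathcal{A}$. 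The only point requiring care—and the main obstacle to a careless interpolation—is precisely this sign bookkeeping: without the structural fact $g>h$, the naive segment between the two witnesses need not pass through the target region $\{g>0,\,h<0\}$, and it is the domination $y\prec x$ that guarantees the zero of $h$ precedes the zero of $g$.
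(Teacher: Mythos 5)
Your proof is correct, but it takes a genuinely different route from the paper's. Both arguments begin with the same reduction: by Lemma~\ref{lem:B=Aford23} and Lemma~\ref{lem:EquivCharacB} it suffices to produce $\mathbf{r}\in\mathcal{P}_{\mathcal{I}}$ with $\chi_x(\mathbf{r})<\chi_z(\mathbf{r})<\chi_y(\mathbf{r})$. From there the paper argues concretely: it picks (without loss of generality) two maps $f_1,f_2$ whose contraction ratios are ordered as in~\eqref{eq:31}, restricts attention to vectors of the form $\mathbf{p}=(p,1-p,0,\ldots,0)$, translates the two required inequalities into an explicit window $A<p<B$, and verifies $A<B$ by algebraic manipulation. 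You instead interpolate along the segment between the two witnesses $\mathbf{p}$ and $\mathbf{q}$ that Lemma~\ref{lem:EquivCharacB} extracts from the hypotheses $(x,y,z),(z,x,y)\in\mathcal{A}=\mathcal{B}$, and exploit linearity of $\mathbf{s}\mapsto\chi_\cdot(\mathbf{s})$: since $g-h=\chi_y-\chi_x$ is affine and strictly positive at both endpoints, the zero of $h$ must precede the zero of $g$, opening a nonempty parameter window. Your version buys two things: it avoids the paper's reduction to two maps with prescribed coordinate orderings (a step the paper passes over as ``without loss of generality'' and which is not entirely immediate from the hypotheses), and it requires no explicit computation. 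It also exposes that the domination hypothesis $y\prec x$ is not really what drives the argument --- contrary to your closing remark, the positivity of $g-h$ at the two endpoints already follows from the fact that both witness orderings place $x$ before $y$ (this is exactly what $(x,y,z),(z,x,y)\in\mathcal{B}$ give), so your proof in fact establishes the implication without invoking domination at all; $y\prec x$ merely reinforces it via the observation in Section~\ref{sec:orderings} that domination forces $\chi_x<\chi_y$ on all of $\mathcal{P}_{\mathcal{I}}$. This mis-attribution is cosmetic and does not affect the validity of the proof.
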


\begin{proof}
All maps of the IFS defining $F$ can not be ordered the same way, therefore, without loss of generality we assume that
\begin{equation}\label{eq:31}
\lambda_1^{(z)} < \lambda_1^{(y)} < \lambda_1^{(x)} \;\;\text{ and }\;\; \lambda_2^{(y)} < \lambda_2^{(x)} < \lambda_2^{(z)}
\end{equation}
corresponding to ordering $(x,y,z)$ and $(z,x,y)$, respectively. We also assume that $f_1$ and $f_2$ do not overlap exactly on neither the $xy$ nor on the $xz$-plane.

According to Lemma~\ref{lem:EquivCharacB} it is enough to show that there exists $\mathbf{p}\in\mathcal{P}_{\mathcal{I}}$ such that $\chi_x(\mathbf{p}) < \chi_{z}(\mathbf{p}) < \chi_{y}(\mathbf{p})$. Consider $\mathbf{p}=(p,1-p,0,\ldots,0)$ noting that the calculations that follow can also be adapted to small enough perturbations of $\mathbf{p}$. Straightforward algebraic manipulations yield
\begin{equation*}
\chi_x(\mathbf{p}) < \chi_{z}(\mathbf{p}) \;\Longleftrightarrow\; p>A \;\;\text{ and }\;\; \chi_{z}(\mathbf{p}) < \chi_{y}(\mathbf{p}) \;\Longleftrightarrow\; p<B,
\end{equation*}
where
\begin{equation*}
A\coloneqq \frac{\log \big(\lambda_2^{(x)}/\lambda_2^{(z)}\big)}{\log \big( (\lambda_2^{(x)}\lambda_1^{(z)}) / (\lambda_2^{(z)}\lambda_1^{(x)}) \big)}  \;\;\text{ and }\;\;
B\coloneqq \frac{\log \big(\lambda_2^{(y)}/\lambda_2^{(z)}\big)}{\log \big( (\lambda_2^{(y)}\lambda_1^{(z)}) / (\lambda_2^{(z)}\lambda_1^{(y)}) \big)}.
\end{equation*}
Additional manipulations show $A<B$ if and only if 
\begin{equation*}
\frac{\log \big(\lambda_1^{(z)}/\lambda_1^{(y)}\big)}{\log \big( \lambda_2^{(y)} / \lambda_2^{(z)} \big)} <
\frac{\log \big(\lambda_1^{(z)}/\lambda_1^{(x)}\big)}{\log \big( \lambda_2^{(x)} / \lambda_2^{(z)} \big)},
\end{equation*}
which is always true because of~\eqref{eq:31}. This completes the proof.
\end{proof}

\section{Symbolic arguments}\label{sec:symbolic}
 
In this section we work on the symbolic space $\Sigma = \mathcal{I}^{\mathbb{N}}$ of all one-sided infinite words $\ii=i_1,i_2,\ldots$ with the Bernoulli measure $\mu_{\mathbf{p}}=\mathbf{p}^{\mathbb{N}}$. Recall all notation from Section~\ref{sec:spongesresult}. Throughout the section a $\sigma$-order always refers to a cube as defined in~\eqref{eq:202}. We define symbolic cubes whose image under the natural projection~\eqref{eq:14} well-approximate Euclidean balls on the sponge $F$. Let $\Sigma_{r}^{\sigma}\coloneqq \{\ii\in\Sigma:\, \ii \text{ is } \sigma\text{-ordered at scale } r\}$. We define the \emph{$\sigma$-ordered symbolic $r$-approximate cube} containing $\ii\in\Sigma_{r}^{\sigma}$ to be
\begin{equation}\label{eq:20}
	B_{\ii}(r)\coloneqq \left\{\jj \in \Sigma: \left|\Pi_{n}^{\sigma} \jj \wedge \Pi_{n}^{\sigma} \ii\right| \geq L_{\ii}(r, \sigma_{n}) \text { for every } 1 \leq n \leq d\right\},
\end{equation}
where $\ii\wedge\jj$ denotes the longest common prefix of $\ii$ and $\jj$. This is the natural extension of the notion of approximate squares used extensively in the study of planar carpets. Due to~\eqref{eq:21}, the image $\pi(B_{\ii}(r))$ is contained within a hypercuboid of $[0,1]^d$ aligned with the coordinate axes with side lengths at most $r$. Observe that if $\ii\in\Sigma_{r}^{\sigma}$, then for all $\jj\in B_{\ii}(r)$ also $\jj\in \Sigma_{r}^{\sigma}$. Thus, we identify the $\sigma$-ordering of $B_{\ii}(r)$ with the $\sigma$-ordering of $\ii$ at scale $r$. If $\ii\in\Sigma_{r}^{\sigma}$, then the surjectivity of the maps $\Pi_n^{\sigma}$ implies that $B_{\ii}(r)$ can be identified with a sequence of symbols of length $L_{\ii}(r,\sigma_1)$ of the form 
\begin{equation*}
	\left(\Pi_{n}^{\sigma} i_{L_{\ii}(r, \sigma_{n+1})+1}, \ldots,\Pi_{n}^{\sigma} i_{L_{\ii}(r, \sigma_{n})}\right)_{n=1}^{d} \in \bigtimes_{n=1}^d (\mathcal{I}_{n}^{\sigma})^{L_{\ii}(r,\sigma_n)-L_{\ii}(r,\sigma_{n+1})},
\end{equation*}
where we set $L_{\ii}(r,\sigma_{d+1})\coloneqq0$.

The following lemmas collect important properties about the $\mu_{\mathbf{p}}$ measure of a symbolic $r$-approximate cube. The first one is the extension of~\cite[eq. (6.2)]{Olsen_PJM98}. We use the convention that any empty product is equal to one. 

\begin{lma}\label{lem:measureApproxCube}
The $\mu_{\mathbf{p}}$ measure of a $\sigma$-ordered symbolic $r$-approximate cube is equal to
\begin{equation*}
\mu_{\mathbf{p}}(B_{\ii}(r)) = \prod_{n=1}^{d}\, \prod_{\ell=L_{\ii}(r,\sigma_{n+1})+1}^{L_{\ii}(r,\sigma_n)} p_n^{\sigma}(\Pi_n^{\sigma}i_{\ell})
= \prod_{n=1}^{d} \prod_{\ell=1}^{L_{\ii}(r,\sigma_n)} P_{n-1}^{\sigma}(\Pi_{n}^{\sigma} i_{\ell}).
\end{equation*}
\end{lma}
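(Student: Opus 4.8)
The plan is to establish the two claimed product formulae separately and then observe that they are equal. Both formulae compute the $\mu_{\mathbf{p}}$-measure of the cylinder-like set $B_{\ii}(r)$, and the natural strategy is to use the description of $B_{\ii}(r)$ as a product of finite blocks of symbols coming from the successive projected alphabets $\mathcal{I}_n^{\sigma}$. Recall from the paragraph preceding \eqref{eq:20} that, since $\ii\in\Sigma_r^{\sigma}$, the approximate cube $B_{\ii}(r)$ is identified with the sequence of symbols
\begin{equation*}
	\left(\Pi_{n}^{\sigma} i_{L_{\ii}(r, \sigma_{n+1})+1}, \ldots,\Pi_{n}^{\sigma} i_{L_{\ii}(r, \sigma_{n})}\right)_{n=1}^{d},
\end{equation*}
so that the $n$-th block prescribes the images $\Pi_n^{\sigma} i_\ell$ for $\ell$ in the range $L_{\ii}(r,\sigma_{n+1})+1 \le \ell \le L_{\ii}(r,\sigma_n)$, and places no further restriction beyond what the coarser projections $\Pi_m^{\sigma}$ for $m<n$ already impose on earlier coordinates.

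First I would prove the first equality. The key point is that the constraint defining $B_{\ii}(r)$ in coordinate $\ell$ depends only on which projected level the index $\ell$ falls into: for $L_{\ii}(r,\sigma_{n+1})<\ell\le L_{\ii}(r,\sigma_n)$ the symbol $i_\ell$ is constrained exactly to the extent that its image $\Pi_n^{\sigma}i_\ell$ in $\mathcal{I}_n^{\sigma}$ is fixed, while the finer choice within the fibre above $\Pi_n^{\sigma}i_\ell$ is free. Since $\mu_{\mathbf{p}}$ is the Bernoulli (product) measure, the measure of $B_{\ii}(r)$ factorises over coordinates $\ell$, and for each $\ell$ the total $\mathbf{p}$-mass of symbols $j\in\mathcal{I}$ with $\Pi_n^{\sigma}j = \Pi_n^{\sigma}i_\ell$ is by definition precisely $p_n^{\sigma}(\Pi_n^{\sigma}i_\ell)$. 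This is exactly where the SPPC is used: it guarantees that the fibre decomposition $p_n^{\sigma}(i)=\sum_{j\,:\,\Pi_n^{\sigma}j=i} p(j)$ of \eqref{eq:23} holds consistently, so that summing the product measure over the unconstrained finer choices yields the factor $p_n^{\sigma}(\Pi_n^{\sigma}i_\ell)$ and nothing else. Taking the product over all $\ell$ and grouping by the level $n$ gives the first displayed product.

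Next I would derive the second equality from the first by a purely algebraic rearrangement, using the definition $P_{n-1}^{\sigma}(\Pi_n^{\sigma}i) = p_n^{\sigma}(\Pi_n^{\sigma}i)/p_{n-1}^{\sigma}(\Pi_{n-1}^{\sigma}i)$ together with the telescoping structure. The idea is to write each factor $p_n^{\sigma}(\Pi_n^{\sigma}i_\ell)$ appearing in the first product as a telescoping quotient $\prod_{m=1}^{n} \bigl(p_m^{\sigma}(\Pi_m^{\sigma}i_\ell)/p_{m-1}^{\sigma}(\Pi_{m-1}^{\sigma}i_\ell)\bigr) = \prod_{m=1}^{n} P_{m-1}^{\sigma}(\Pi_m^{\sigma}i_\ell)$, valid because $p_0^{\sigma}(\emptyset)=1$. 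After substituting this into the first product and interchanging the order of the two products, one collects, for each fixed level $m$, all indices $\ell$ with $\ell \le L_{\ii}(r,\sigma_m)$; this is exactly the set of $\ell$ lying in levels $n\ge m$, since $L_{\ii}(r,\sigma_{n+1}) < \ell \le L_{\ii}(r,\sigma_n)$ with $n\ge m$ is equivalent to $\ell \le L_{\ii}(r,\sigma_m)$ given the $\sigma$-ordering \eqref{eq:202}. This reindexing converts the double product into $\prod_{n=1}^d \prod_{\ell=1}^{L_{\ii}(r,\sigma_n)} P_{n-1}^{\sigma}(\Pi_n^{\sigma}i_\ell)$, which is the second formula.

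The main obstacle I anticipate is the bookkeeping in the reindexing step, in particular keeping the two conventions straight: that empty products equal one (relevant when $L_{\ii}(r,\sigma_{n+1})=L_{\ii}(r,\sigma_n)$, i.e. an empty level block), and that the $\sigma$-ordering \eqref{eq:202} is what legitimises the clean interchange of products. One must verify that the telescoping is applied over exactly the right range and that no factor is double-counted or omitted when passing between the ``block'' indexing by $n$ and the ``cumulative'' indexing up to $L_{\ii}(r,\sigma_n)$. The measure-theoretic content in the first equality is routine once one notes $\mu_{\mathbf{p}}$ is a product measure; the genuine care is entirely in the combinatorial rearrangement yielding the second equality.
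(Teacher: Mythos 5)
Your proof is correct and follows essentially the same route as the paper: decompose $B_{\ii}(r)$ into the prescribed blocks, use the product structure of $\mu_{\mathbf{p}}$ to get the factor $p_n^{\sigma}(\Pi_n^{\sigma}i_\ell)$ for each coordinate, then obtain the second formula from the telescoping identity $p_n^{\sigma}(\Pi_n^{\sigma}i_\ell)=\prod_{m=1}^{n}P_{m-1}^{\sigma}(\Pi_m^{\sigma}i_\ell)$ and a reindexing of the double product (the paper compresses this last step into a citation of the definition of $P_{n-1}^{\sigma}$). One small inaccuracy worth fixing: the fibre decomposition $p_n^{\sigma}(i)=\sum_{j:\,\Pi_n^{\sigma}j=i}p(j)$ is simply the \emph{definition} of $p_n^{\sigma}$, not a consequence of the SPPC (the SPPC is only invoked for the staged identity \eqref{eq:23}), so the first equality needs no separation hypothesis at all.
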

\begin{proof}
From definition~\eqref{eq:20} of $B_{\ii}(r)$ it follows that an approximate cube is the disjoint union of level $L_{\ii}(r,\sigma_1)$ cylinder sets:
\begin{equation*}
\big\{ [j_1,\ldots,j_{L_{\ii}(r,\sigma_1)}]:\, \Pi_{n}^{\sigma} j_{\ell}=\Pi_n^{\sigma} i_{\ell} \text{ for } \ell=L_{\ii}(r,\sigma_{n+1})+1,\ldots,L_{\ii}(r,\sigma_n) \text{ and } 1\leq n\leq d \big\}.
\end{equation*}
For each such cylinder, $\mu_{\mathbf{p}}([j_1,\ldots,j_{L_{\ii}(r,\sigma_1)}]) = \prod_{\ell=1}^{L_{\ii}(r,\sigma_1)} p(j_{\ell})$. Adding up and using multiplicativity, we obtain
\begin{equation*}
	\mu_{\mathbf{p}}(B_{\ii}(r)) = \prod_{n=1}^{d}\, \prod_{\ell=L_{\ii}(r,\sigma_{n+1})+1}^{L_{\ii}(r,\sigma_n)}\, \sum_{j\in\mathcal{I}:\, \Pi_n^{\sigma} j=\Pi_n^{\sigma} i_{\ell}} p(j) = \prod_{n=1}^{d}\, \prod_{\ell=L_{\ii}(r,\sigma_{n+1})+1}^{L_{\ii}(r,\sigma_n)}\, p_n^{\sigma}(\Pi_n^{\sigma} i_{\ell}).
\end{equation*}
The last equality in the assertion follows from definition~\eqref{eq:11} of $P_{n-1}^{\sigma}(\Pi_{n}^{\sigma} i_{\ell})$.
\end{proof}

\begin{rem}\label{rem:1}
Assume $B_{\ii}(r)$ is $\sigma$-ordered and $L_{\ii}(r,\sigma_m)=L_{\ii}(r,\sigma_{m-1})=\ldots=L_{\ii}(r,\sigma_{m-k})$ for some $1\leq k< m\leq d$. Then the formula for $\mu_{\mathbf{p}}(B_{\ii}(r))$ can also be calculated using the ordering $(\sigma_1,\ldots,\sigma_{m-k-1},\omega,\sigma_{m+1},\ldots,\sigma_d)$, where the first block is empty if $k=m-1$, the last block is empty if $m=d$ and $\omega$ is any permutation of $\sigma_m,\sigma_{m-1},\ldots,\sigma_{m-k}$.
\end{rem}

Motivated by the definition of $\dim_{\mathrm{A}}\nu$, the goal is to bound the ratio $\mu_{\mathbf{p}}(B_{\ii}(R))/\mu_{\mathbf{p}}(B_{\ii}(r))$ for approximate cubes with different orderings. The first step is to consider when $B_{\ii}(R)$ and $B_{\ii}(r)$ have the same ordering. 

Let $\lambda_{\min}\coloneqq\min_{n,i} \lambda_i^{(n)}$ and fix $\sigma\in\mathcal{A}$. For $1\leq n\leq d$, we introduce
\begin{equation}\label{eq:25}
\overline{k}_n^{\sigma} \coloneqq \argmax_{i\in\mathcal{I}_n^{\sigma}} \frac{ \log P_{n-1}^{\sigma}(i) }{ \log \lambda_{i}^{(\sigma_n)} },\qquad 
\underline{k}_n^{\sigma} \coloneqq \argmin_{i\in\mathcal{I}_n^{\sigma}} \frac{ \log P_{n-1}^{\sigma}(i) }{ \log \lambda_{i}^{(\sigma_n)} }
\end{equation}
and
\begin{equation}\label{eq:26}
\overline{s}_n^{\sigma} \coloneqq \frac{ \log P_{n-1}^{\sigma}\big(\overline{k}_n^{\sigma}\big) }{ \log \lambda_{\overline{k}_n^{\sigma}}^{(\sigma_n)} },\qquad
\underline{s}_n^{\sigma} \coloneqq \frac{ \log P^\sigma_{n-1}\big(\underline{k}_n^{\sigma}\big) }{ \log \lambda_{\underline{k}_n^{\sigma}}^{(\sigma_n)} }.
\end{equation}
With this notation $\overline{S}(\mathbf{p},\sigma) = \sum_{n=1}^d \overline{s}_n^{\sigma}$ and $\underline{S}(\mathbf{p},\sigma) = \sum_{n=1}^d \underline{s}_n^{\sigma}$. If there are multiple choices for either $\overline{k}_n^{\sigma}$ or $\underline{k}_n^{\sigma}$, then choose one arbitrarily.

\begin{lma}\label{lem:ratioOfMeasures}
Fix $\sigma\in\mathcal{A}$ and assume that both $B_{\ii}(R)$ and $B_{\ii}(r)$ are $\sigma$-ordered, where $0<R\leq 1$ and  $r<\lambda_{\min} R$. Then there exists a constant $C>1$ depending only on the sponge $F$ such that 
\begin{equation*}
C^{-1} \left( \frac{R}{r} \right)^{ \underline{S}(\mathbf{p},\sigma) }
\leq \frac{\mu_{\mathbf{p}}(B_{\ii}(R))}{\mu_{\mathbf{p}}(B_{\ii}(r))}
\leq C \left( \frac{R}{r} \right)^{ \overline{S}(\mathbf{p},\sigma) }.
\end{equation*}
\end{lma}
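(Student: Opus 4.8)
The plan is to reduce everything to the exact product formula for $\mu_{\mathbf{p}}(B_{\ii}(r))$ supplied by Lemma~\ref{lem:measureApproxCube} and then estimate coordinate by coordinate. Since $R>r$ forces $L_{\ii}(R,\sigma_n)\le L_{\ii}(r,\sigma_n)$ for each $n$, the formula immediately gives
\[
\frac{\mu_{\mathbf{p}}(B_{\ii}(R))}{\mu_{\mathbf{p}}(B_{\ii}(r))} = \prod_{n=1}^{d} \prod_{\ell=L_{\ii}(R,\sigma_n)+1}^{L_{\ii}(r,\sigma_n)} \big(P_{n-1}^{\sigma}(\Pi_{n}^{\sigma} i_{\ell})\big)^{-1},
\]
so it suffices to bound the $n$-th block product above by $\lambda_{\min}^{-\overline{s}_n^{\sigma}}(R/r)^{\overline{s}_n^{\sigma}}$ and below by $\lambda_{\min}^{\underline{s}_n^{\sigma}}(R/r)^{\underline{s}_n^{\sigma}}$, and then multiply over $n$.

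For the per-factor estimate I would use the definitions~\eqref{eq:25}--\eqref{eq:26} directly. For any $i\in\mathcal{I}_n^{\sigma}$, the fact that $\overline{s}_n^{\sigma}$ is a maximum gives $\log P_{n-1}^{\sigma}(i)/\log\lambda_i^{(\sigma_n)}\le \overline{s}_n^{\sigma}$; since $\log\lambda_i^{(\sigma_n)}<0$ and $\overline{s}_n^{\sigma}\ge 0$, multiplying through (and flipping the inequality) yields $P_{n-1}^{\sigma}(i)\ge (\lambda_i^{(\sigma_n)})^{\overline{s}_n^{\sigma}}$, i.e. $P_{n-1}^{\sigma}(i)^{-1}\le (\lambda_i^{(\sigma_n)})^{-\overline{s}_n^{\sigma}}$; symmetrically $\underline{s}_n^{\sigma}$ gives $P_{n-1}^{\sigma}(i)^{-1}\ge (\lambda_i^{(\sigma_n)})^{-\underline{s}_n^{\sigma}}$. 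Applying this with $i=\Pi_n^{\sigma} i_\ell$ and using that an exact overlap on $E_n^{\sigma}$ forces $\lambda_{\Pi_n^{\sigma} i_\ell}^{(\sigma_n)}=\lambda_{i_\ell}^{(\sigma_n)}$ (the projection only identifies maps agreeing in the coordinates $\sigma_1,\dots,\sigma_n$), the $n$-th block product is squeezed between $\big(\prod_\ell \lambda_{i_\ell}^{(\sigma_n)}\big)^{-\overline{s}_n^{\sigma}}$ and $\big(\prod_\ell \lambda_{i_\ell}^{(\sigma_n)}\big)^{-\underline{s}_n^{\sigma}}$.

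It then remains to compare $\prod_{\ell=L_{\ii}(R,\sigma_n)+1}^{L_{\ii}(r,\sigma_n)}\lambda_{i_\ell}^{(\sigma_n)}$ with $r/R$, which is precisely what the stopping-time definition~\eqref{eq:21} delivers up to one boundary symbol: writing $P(r)=\prod_{\ell=1}^{L_{\ii}(r,\sigma_n)}\lambda_{i_\ell}^{(\sigma_n)}$ one has $\lambda_{\min}r< P(r)\le r$ and the analogous bound for $R$, so the block product $P(r)/P(R)$ lies in $(\lambda_{\min}r/R,\ \lambda_{\min}^{-1}r/R)$. Feeding this into the squeeze above (and again using $\overline{s}_n^{\sigma},\underline{s}_n^{\sigma}\ge 0$, so that $x\mapsto x^{-s}$ is decreasing) bounds the $n$-th block by $\lambda_{\min}^{-\overline{s}_n^{\sigma}}(R/r)^{\overline{s}_n^{\sigma}}$ from above and $\lambda_{\min}^{\underline{s}_n^{\sigma}}(R/r)^{\underline{s}_n^{\sigma}}$ from below; multiplying over $n$ and recalling $\overline{S}(\mathbf{p},\sigma)=\sum_n\overline{s}_n^{\sigma}$ and $\underline{S}(\mathbf{p},\sigma)=\sum_n\underline{s}_n^{\sigma}$ gives the claim with $C=\lambda_{\min}^{-\overline{S}(\mathbf{p},\sigma)}>1$, which works for both inequalities since $\overline{S}\ge\underline{S}$ and is independent of $R,r,\ii$.

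I do not expect a deep obstacle here; the argument is essentially bookkeeping built on Lemma~\ref{lem:measureApproxCube}. The points needing care are the sign-chasing when inverting the defining inequalities for $\overline{s}_n^{\sigma}$ and $\underline{s}_n^{\sigma}$ (everything hinges on $\log\lambda_i^{(\sigma_n)}<0$ together with $\overline{s}_n^{\sigma},\underline{s}_n^{\sigma}\ge 0$), the observation that projecting an index along an exact overlap leaves the $\sigma_n$-th contraction ratio unchanged, and the tracking of the $\lambda_{\min}$ factors produced by the quantization in~\eqref{eq:21}. The hypothesis $r<\lambda_{\min}R$ serves only to guarantee that each block of indices is non-empty, so that the comparison is meaningful, and plays no further role.
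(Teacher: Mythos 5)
Your proof is correct and takes essentially the same route as the paper's: both start from the exact product formula of Lemma~\ref{lem:measureApproxCube}, bound each factor of the resulting telescoping ratio by the extremal exponents $\overline{s}_n^{\sigma}$ and $\underline{s}_n^{\sigma}$ from \eqref{eq:25}--\eqref{eq:26}, and then compare the block products $\prod_{\ell=L_{\ii}(R,\sigma_n)+1}^{L_{\ii}(r,\sigma_n)}\lambda_{i_\ell}^{(\sigma_n)}$ with $r/R$ via the stopping-time definition \eqref{eq:21}. Your explicit observation that projecting an index along an exact overlap leaves $\lambda^{(\sigma_n)}$ unchanged is a detail the paper uses implicitly, and your explicit constant $C=\lambda_{\min}^{-\overline{S}(\mathbf{p},\sigma)}$ matches what the paper's combination of \eqref{eq:24} and \eqref{eq:27} produces.
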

\begin{proof}
It follows from Lemma~\ref{lem:measureApproxCube} that
\begin{equation*}
	\frac{\mu_{\mathbf{p}}(B_{\ii}(R))}{\mu_{\mathbf{p}}(B_{\ii}(r))}
	= \prod_{n=1}^{d}\, \prod_{\ell=L_{\ii}(R,\sigma_n)+1}^{L_{\ii}(r,\sigma_n)} \frac{1}{P_{n-1}^{\sigma}(\Pi_{n}^{\sigma}i_{\ell})} 
	= \prod_{n=1}^{d}\, \prod_{\ell=L_{\ii}(R,\sigma_n)+1}^{L_{\ii}(r,\sigma_n)} \big( \lambda_{\Pi_n^\sigma i_{\ell}}^{(\sigma_n)} \big)^{\frac{\log P_{n-1}^{\sigma}(\Pi_{n}^{\sigma}i_{\ell})}{ -\log \lambda_{\Pi_n^{\sigma} i_{\ell}}^{(\sigma_n)} }}.
\end{equation*}
The requirement that $r<\lambda_{\min} R$ ensures that $L_{\ii}(R,\sigma_n)<L_{\ii}(r,\sigma_n)$ for all $n$. We bound each exponent individually to obtain
\begin{equation}\label{eq:24}
\prod_{n=1}^{d} \left( \prod_{\ell=L_{\ii}(R,\sigma_n)+1}^{L_{\ii}(r,\sigma_n)}  \lambda_{\Pi_n^{\sigma} i_{\ell}}^{(\sigma_n)}  \right)^{ -\underline{s}_n^{\sigma} }
\leq \frac{\mu_{\mathbf{p}}(B_{\ii}(R))}{\mu_{\mathbf{p}}(B_{\ii}(r))}
\leq \prod_{n=1}^{d} \left( \prod_{\ell=L_{\ii}(R,\sigma_n)+1}^{L_{\ii}(r,\sigma_n)}  \lambda_{\Pi_n^{\sigma} i_{\ell}}^{(\sigma_n)}  \right)^{ -\overline{s}_n^{\sigma} }.
\end{equation}
From definition~\eqref{eq:21} of $L_{\ii}(r,n)$ it follows that there exists $C>1$ such that
\begin{equation}\label{eq:27}
C^{-1} \cdot\frac{r}{R}\leq \prod_{\ell=L_{\ii}(R,n)+1}^{L_{\ii}(r,n)}  \lambda_{i_{\ell}}^{(n)} \leq C\cdot \frac{r}{R},
\end{equation}
which together with~\eqref{eq:24} concludes the proof.
\end{proof}

Now we extend Lemma~\ref{lem:ratioOfMeasures} so that $B_{\ii}(R)$ and $B_{\ii}(r)$ can have different orderings. This step is not necessary if $F$ is a Lalley--Gatzouras sponge and represents one of the key technical challenges in the paper.

\begin{prop}\label{prop:ratioOfMeasures}
Assume $0<R\leq 1$ and  $r<\lambda_{\min} R$. Then there exists a constant $C>1$ depending only on the sponge $F$ such that 
\begin{equation*}
	C^{-1} \left( \frac{R}{r} \right)^{ \min_{\sigma\in\mathcal{A}} \underline{S}(\mathbf{p},\sigma) }
	\leq \frac{\mu_{\mathbf{p}}(B_{\ii}(R))}{\mu_{\mathbf{p}}(B_{\ii}(r))}
	\leq C \left( \frac{R}{r} \right)^{ \max_{\sigma\in\mathcal{A}} \overline{S}(\mathbf{p},\sigma) }.
\end{equation*}
\end{prop}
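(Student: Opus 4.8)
The plan is to telescope across the scales at which the ordering of the approximate cube changes, reducing everything to the equal-ordering estimate of Lemma~\ref{lem:ratioOfMeasures}. Fix $\ii\in\Sigma$ and $0<r<\lambda_{\min}R\le 1$. Since the stopping numbers $L_{\ii}(\rho,n)$ are nondecreasing as $\rho$ decreases, there are only finitely many scales $R=\rho_0>\rho_1>\cdots>\rho_M=r$ at which the cube-ordering~\eqref{eq:202} of $B_{\ii}(\rho)$ changes, and on each block $[\rho_{k+1},\rho_k]$ this ordering is a fixed $\sigma^{(k)}\in\mathcal{A}$. At every transition scale $\rho_k$ two of the stopping numbers agree, so Remark~\ref{rem:1} guarantees that $\mu_{\mathbf p}(B_{\ii}(\rho_k))$ takes the same value whether computed with $\sigma^{(k-1)}$ or with $\sigma^{(k)}$. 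Hence I may write the exact telescoping
\begin{equation*}
\frac{\mu_{\mathbf p}(B_{\ii}(R))}{\mu_{\mathbf p}(B_{\ii}(r))}=\prod_{k=0}^{M-1}\frac{\mu_{\mathbf p}(B_{\ii}(\rho_k))}{\mu_{\mathbf p}(B_{\ii}(\rho_{k+1}))},
\end{equation*}
in which the $k$-th factor is a ratio of two cubes sharing the ordering $\sigma^{(k)}$.

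The computation inside the proof of Lemma~\ref{lem:ratioOfMeasures} expresses the $k$-th factor as $\prod_{n=1}^{d}\prod_{\ell}\big(\lambda^{(\sigma^{(k)}_n)}_{\Pi^{\sigma^{(k)}}_n i_\ell}\big)^{-s}$ with each exponent $s$ lying in $[\underline s^{\sigma^{(k)}}_n,\overline s^{\sigma^{(k)}}_n]$. Bounding the exponents by $\overline s^{\sigma^{(k)}}_n$ (respectively $\underline s^{\sigma^{(k)}}_n$) and invoking~\eqref{eq:27} gives a blockwise estimate $C_0^{-1}(\rho_k/\rho_{k+1})^{\underline S(\mathbf p,\sigma^{(k)})}\le \mu_{\mathbf p}(B_{\ii}(\rho_k))/\mu_{\mathbf p}(B_{\ii}(\rho_{k+1}))\le C_0(\rho_k/\rho_{k+1})^{\overline S(\mathbf p,\sigma^{(k)})}$. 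Because $\underline S(\mathbf p,\sigma^{(k)})\ge\min_{\sigma\in\mathcal A}\underline S(\mathbf p,\sigma)$ and $\overline S(\mathbf p,\sigma^{(k)})\le\max_{\sigma\in\mathcal A}\overline S(\mathbf p,\sigma)$, while $\prod_k(\rho_k/\rho_{k+1})=R/r$, multiplying the blocks produces precisely the two exponents claimed — \emph{provided} the accumulated constant stays bounded.

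The main obstacle is exactly that last proviso: the number $M$ of ordering changes is not controlled by $F$ alone. Since $F$ is only of Bara\'nski type, the ordering of $B_{\ii}(\rho)$ may oscillate — along an $\ii$ whose letters alternate between two maps with opposite orderings the empirical Lyapunov exponents of Lemma~\ref{lem:EquivCharacB} cross back and forth, so $M$ can grow like $\log(R/r)$. A factor $C_0$ per block would then amount to $C_0^M=(R/r)^{\Theta(1)}$ and corrupt the exponent, so the heart of the proof is to show that the constant does \emph{not} accumulate. To this end I would not apply~\eqref{eq:27} blockwise; instead I keep the telescoped product exact and track the sole source of the constant, the partial-symbol overshoots, through the quantity $\Lambda(\rho,c):=\prod_{\ell\le L_{\ii}(\rho,c)}\lambda^{(c)}_{i_\ell}\in(\lambda_{\min}\rho,\rho]$. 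Each block factor is a product of ratios $\Lambda(\rho_{k+1},\sigma^{(k)}_n)/\Lambda(\rho_k,\sigma^{(k)}_n)$, and since a transition is an adjacent transposition of $\sigma$, the overshoot contributions at a shared scale $\rho_k$ attached to the coordinates whose role and whose index data $\mathcal I^\sigma_\bullet$ are unaffected by the swap cancel between the two neighbouring blocks. The delicate point — which I expect to be the genuine crux and the substance of the ``subdivision argument'' — is to show that the residual overshoot contributions of the swapped coordinates, summed over all $M$ transitions, remain bounded by a constant depending only on $F$; this amounts to controlling the total variation of the role-exponents along the oscillating sequence of orderings, rather than paying a fixed price at every transition. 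Once this uniform bound is secured, the surviving main term $\sum_k \log(\rho_k/\rho_{k+1})\,\overline S(\mathbf p,\sigma^{(k)})\le \log(R/r)\max_{\sigma\in\mathcal A}\overline S(\mathbf p,\sigma)$, together with its lower counterpart, delivers both inequalities with a constant $C$ depending only on the sponge.
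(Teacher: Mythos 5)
Your reduction is set up sensibly, and you have correctly located the crux: subdividing $[r,R]$ at \emph{every} scale where the cube-ordering changes produces a number of blocks $M$ that cannot be bounded in terms of $F$ alone (it can grow like $\log(R/r)$), so paying a fixed multiplicative constant per block via \eqref{eq:27} corrupts the exponent. The genuine gap is that your proposed repair is never carried out. The assertion that the overshoot contributions of the swapped coordinates cancel between neighbouring blocks, and that the residue summed over all $M$ transitions stays bounded by a constant depending only on $F$, \emph{is} the proposition; you offer no proof of it, only the expectation that it can be done (``the delicate point \dots I expect \dots'', ``once this uniform bound is secured''). As written, the argument proves nothing beyond the equal-ordering case (Lemma~\ref{lem:ratioOfMeasures}, already in the paper) plus a reduction to an unproven cancellation claim, and it is not at all clear that claim holds in the form you state it, since when the ordering oscillates the overshoot ratios attached to a given coordinate do not pair up across non-adjacent blocks. (A secondary soft spot: your exact-telescoping step assumes that at every transition scale two stopping numbers agree so that Remark~\ref{rem:1} bridges the two orderings; depending on which side of the jump the tie occurs, and because several stopping numbers can jump at the same scale, this needs the more careful argument that the paper gives via the intermediate ordering $\omega$ in the proof of Lemma~\ref{lem:40}.)

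The paper avoids the accumulation problem entirely by a different subdivision, and the observation you missed is simple: Lemma~\ref{lem:ratioOfMeasures} only requires the two \emph{endpoint} cubes $B_{\ii}(R)$ and $B_{\ii}(r)$ to share an ordering; it imposes nothing on the scales in between, where the ordering may oscillate arbitrarily. So one should not subdivide at every transition. The paper sets $R_1=\inf\{r'>r:\sigma_{\ii}(r')=\sigma_{\ii}(R)\}$, so that $B_{\ii}(R)$ and $B_{\ii}(R_1)$ share an ordering and Lemma~\ref{lem:ratioOfMeasures} applies across this (possibly very long, highly oscillating) interval in one step; it then drops by a fixed factor to $(1-\varepsilon)R_1$ at bounded cost (Lemma~\ref{lem:40}), reads off the ordering there, and repeats with $R_k=\inf\{r'>r:\sigma_{\ii}(r')=\sigma_{\ii}((1-\varepsilon)R_{k-1})\}$. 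Because each $R_k$ is an infimum, the ordering $\sigma_{\ii}(R_k)$ never recurs at scales below $R_k$, so every ordering encountered is distinct from all previous ones and the procedure terminates after $M\leq d!$ steps. The total multiplicative loss is then at most a bounded power (at most $d!$) of the constants from Lemmas~\ref{lem:ratioOfMeasures} and~\ref{lem:40} --- exactly the uniform control your scheme could not deliver.
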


\subsection{Proof of Proposition~\ref{prop:ratioOfMeasures}}

Let $\sigma_{\ii}(r)$ denote the ordering of $B_{\ii}(r)$ and assume $\sigma_{\ii}(R)\neq \sigma_{\ii}(r)$. Trying to estimate the ratio $\mu_{\mathbf{p}}(B_{\ii}(R))/\mu_{\mathbf{p}}(B_{\ii}(r))$ directly using Lemma~\ref{lem:measureApproxCube} did not lead us to a proof. Instead, the rough idea is to divide the interval $[r,R]$ of scales into a uniformly bounded number of subintervals so that the ordering at roughly the two endpoints of a subinterval are the same. Then we repeatedly apply Lemma~\ref{lem:ratioOfMeasures} to each subinterval. The next lemma allows us to make a subdivision. 

\begin{lma}\label{lem:40}
	Fix $\varepsilon>0$ such that $1-\varepsilon> \max_{n,i} \lambda_i^{(n)}$. There exists a constant $C_1=C_1(F,\mathbf{p},\varepsilon)<\infty$ such that for all $\ii\in\Sigma$ and $0<R\leq 1$,
	\begin{equation*}
		\frac{\mu_{\mathbf{p}}(B_{\ii}(R))}{\mu_{\mathbf{p}}\big(B_{\ii}((1-\varepsilon)R)\big)} \leq C_1.
	\end{equation*}
\end{lma}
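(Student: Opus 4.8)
The plan is to prove the bound in two stages: first a crude scale estimate showing that lowering the scale from $R$ to $(1-\varepsilon)R$ pushes each coordinate stopping time up by at most one, and then a measure estimate obtained by rewriting $\mu_{\mathbf{p}}(B_{\ii}(r))$ in a form that makes no reference to the ordering $\sigma_{\ii}(r)$. The second stage is where the real work lies, since $B_{\ii}(R)$ and $B_{\ii}((1-\varepsilon)R)$ need not be ordered the same way.

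First I would record the scale estimate. By the defining relation~\eqref{eq:21} we have $\prod_{\ell=1}^{L_{\ii}(R,n)}\lambda_{i_\ell}^{(n)}\le R$, so appending a single factor gives $\prod_{\ell=1}^{L_{\ii}(R,n)+1}\lambda_{i_\ell}^{(n)}\le R\cdot\max_{m,i}\lambda_i^{(m)}<(1-\varepsilon)R$, using the hypothesis $1-\varepsilon>\max_{m,i}\lambda_i^{(m)}$. Combined with the obvious monotonicity in $r$, this yields $L_{\ii}(R,n)\le L_{\ii}((1-\varepsilon)R,n)\le L_{\ii}(R,n)+1$ for every coordinate $n$.

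The main obstacle is that the product formula of Lemma~\ref{lem:measureApproxCube} is written in terms of a fixed ordering, so it cannot be divided term by term once the ordering changes. To circumvent this I would reinterpret the formula position by position. For a scale $r$ and position $\ell$, let $D_\ell(r)\coloneqq\{m:L_{\ii}(r,m)\ge \ell\}$ be the set of coordinates still active at $\ell$. The point is that the data $\mathcal{I}_n^\sigma$, $\Pi_n^\sigma$ and $p_n^\sigma$ are determined entirely by the subspace $E_n^\sigma$, hence only by the coordinate set $\{\sigma_1,\dots,\sigma_n\}$ and not by its internal order; consequently the factor $p_n^\sigma(\Pi_n^\sigma i_\ell)$ attached to position $\ell$ in Lemma~\ref{lem:measureApproxCube} depends only on $D_\ell(r)$ and on $i_\ell$. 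Writing $g(S,i)$ for this common value, with $g(\emptyset,\cdot)=1$, the first identity in Lemma~\ref{lem:measureApproxCube} collapses to the ordering-free product $\mu_{\mathbf{p}}(B_{\ii}(r))=\prod_{\ell\ge1}g(D_\ell(r),i_\ell)$; its consistency is precisely the invariance under permuting tied coordinates recorded in Remark~\ref{rem:1}.

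Finally I would take the ratio in this form. Since $D_\ell(R)\subseteq D_\ell((1-\varepsilon)R)$ for all $\ell$, and by the scale estimate the two sets differ only when $\ell=L_{\ii}(R,m)+1=L_{\ii}((1-\varepsilon)R,m)$ for some coordinate $m$, the factors cancel except at the at most $d$ positions of this form. At each such position the numerator $g(D_\ell(R),i_\ell)\le1$ while the denominator satisfies $g(D_\ell((1-\varepsilon)R),i_\ell)\ge p(i_\ell)\ge p_{\min}\coloneqq\min_k p(k)$, because the relevant agreement set always contains $i_\ell$ itself. Multiplying over these positions gives $\mu_{\mathbf{p}}(B_{\ii}(R))/\mu_{\mathbf{p}}(B_{\ii}((1-\varepsilon)R))\le p_{\min}^{-d}$, so one may take $C_1=p_{\min}^{-d}$, which depends only on $F$ and $\mathbf{p}$.
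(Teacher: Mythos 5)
Your proof is correct, but it takes a genuinely different route from the paper's. The paper splits into two cases according to whether $\sigma_{\ii}(R)$ and $\sigma_{\ii}((1-\varepsilon)R)$ coincide: in the first case it compares the two products from Lemma~\ref{lem:measureApproxCube} index by index (at most $d^2$ indices can differ, each contributing a uniformly bounded ratio), and in the second case it constructs a common ordering $\omega$ --- partitioning each tied block of $\sigma_{\ii}(R)$ into the coordinates whose stopping time increments and those whose does not --- so that, by Remark~\ref{rem:1}, both measures can be computed in the single ordering $\omega$, reducing to the first case. You avoid the case analysis entirely by strengthening the observation behind Remark~\ref{rem:1}: since $\mathcal{I}_n^{\sigma}$, $\Pi_n^{\sigma}$ and $p_n^{\sigma}$ are determined by the subspace $E_n^{\sigma}$ alone, the factor attached to position $\ell$ in Lemma~\ref{lem:measureApproxCube} depends only on the unordered active set $D_\ell(r)=\{m:\,L_{\ii}(r,m)\geq\ell\}$ and on $i_\ell$, which turns the measure of an approximate cube into the manifestly ordering-free product $\prod_{\ell\geq 1} g(D_\ell(r),i_\ell)$. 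The ratio then telescopes except at the at most $d$ positions where $D_\ell(R)\subsetneq D_\ell((1-\varepsilon)R)$, each contributing at most $p_{\min}^{-1}$, since the numerator factor is at most $1$ and the denominator factor is at least $p(i_\ell)$ because $i_\ell$ always belongs to its own fibre. What your approach buys is a shorter argument with no construction of $\omega$, a sharper count of differing positions ($d$ rather than $d^2$), and the explicit constant $C_1=p_{\min}^{-d}$; what the paper's approach buys is that it never needs to verify the well-definedness of $g$ across different orderings in $\mathcal{A}$, working only with the invariance under permutations of tied coordinates already recorded in Remark~\ref{rem:1}. Both rest on the same structural fact, namely that the fibre probabilities are functions of the coordinate subspace and not of the permutation used to list it.
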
 
\begin{proof}
First assume that $\sigma_{\ii}(R)= \sigma_{\ii}((1-\varepsilon)R)=\sigma$ and consider the symbolic representation of $B_{\ii}(R)$ and $B_{\ii}((1-\varepsilon)R)$. They could be different at indices 
\begin{equation*}
	L_{\ii}(R,\sigma_n)+1,\ldots, L_{\ii}(R,\sigma_n)+ d-n+1 \quad\text{ for each } 1\leq n\leq d,
\end{equation*}
but necessarily agree at all other indices due to the choice of $\varepsilon$. Where they agree, the corresponding terms simply cancel out in $\mu_{\mathbf{p}}(B_{\ii}(R)) / \mu_{\mathbf{p}}\big(B_{\ii}((1-\varepsilon)R)\big)$.  Hence, there are at most $1+2+\ldots+d<d^2$ different indices of interest. An index where they differ corresponds in $\mu_{\mathbf{p}}(B_{\ii}(R)) / \mu_{\mathbf{p}}\big(B_{\ii}((1-\varepsilon)R)\big)$ to a ratio $p/q$, where $p\geq q$ ($p$ is a sum containing $q$ by~\eqref{eq:23}) and both $p,q$ are uniformly bounded away from $0$ (simply because $p$ and $q$ are sums of different terms of $\mathbf{p}$ which all are strictly positive to begin with).   Therefore, there exists a uniform upper bound $C$ for $p/q$. As a result,
\begin{equation*}
	\frac{\mu_{\mathbf{p}}(B_{\ii}(R))}{\mu_{\mathbf{p}}\big(B_{\ii}((1-\varepsilon)R)\big)} \leq C^{d^2},
\end{equation*}
completing the proof in this case by setting $C_1=C^{d^2}$.
	
We claim that even if $\sigma_{\ii}(R)\neq\sigma_{\ii}((1-\varepsilon)R)$, there still exists an ordering $\omega$ such that the value of $\mu_{\mathbf{p}}(B_{\ii}(R))$ is the same when calculating it with $\sigma_{\ii}(R)$ or $\omega$ and likewise, the value of $\mu_{\mathbf{p}}(B_{\ii}((1-\varepsilon)R))$ is the same when calculating it with $\sigma_{\ii}((1-\varepsilon)R)$ or $\omega$. Hence, we may apply the previous argument to $\omega$.

To see the claim, first observe that the ordering $\sigma_{\ii}(R)=\sigma$ can be partitioned into $1\leq K\leq d$ blocks along indices $d\geq n_1>n_2>\ldots>n_K=1$ so that
\begin{multline*}
L_{\ii}(R,\sigma_d) = \ldots = L_{\ii}(R,\sigma_{n_1}) < L_{\ii}(R,\sigma_{n_1+1}) = \ldots = L_{\ii}(R,\sigma_{n_2}) \\
<\ldots<  L_{\ii}(R,\sigma_{n_{K-1}+1}) = \ldots = L_{\ii}(R,\sigma_{n_K}).
\end{multline*} 
The assumption on $\varepsilon$ implies that $L_{\ii}((1-\varepsilon)R,n)-L_{\ii}(R,n)\in\{0,1\}$ for all coordinates $n$. Therefore, we can partition the block $X_{\ell}\coloneqq\{\sigma_{n_{\ell}},\ldots,\sigma_{n_{\ell-1}+1}\}$ into $Y_{\ell}\sqcup Z_{\ell}$, where $Y_{\ell}=\{n\in X_{\ell}:\, L_{\ii}((1-\varepsilon)R,n)=L_{\ii}(R,n)\}$ and $Z_{\ell}=\{n\in X_{\ell}:\, L_{\ii}((1-\varepsilon)R,n)=L_{\ii}(R,n)+1\}$. We fix an (arbitrary) ordering of all $Y_{\ell}$, $Z_{\ell}$ and define
\begin{equation*}
\omega\coloneqq \big(Z_K,Y_K,Z_{K-1},Y_{K-1},\ldots,Z_2,Y_2,Z_1,Y_1\big).
\end{equation*}
By Remark~\ref{rem:1}, $\mu_{\mathbf{p}}(B_{\ii}(R))$ can be calculated by another ordering that only permutes elements within any of the blocks $X_{\ell}$. The ordering $\omega$ clearly satisfies this. It remains to argue that $\omega$ also works for $\sigma_{\ii}((1-\varepsilon)R)$. 

If $n\in Y_{\ell}$ and $m\in Z_\ell$ (for some $1\leq \ell\leq K$) then by definition $L_{\ii}((1-\varepsilon)R,n) < L_{\ii}((1-\varepsilon)R,m)$. Moreover, If $n\in Z_{\ell}$ and $m\in Y_{\ell+1}$ (for some $1\leq \ell\leq K-1$) then we also have $L_{\ii}((1-\varepsilon)R,n) \leq L_{\ii}((1-\varepsilon)R,m)$. These imply that $\sigma_{\ii}((1-\varepsilon)R)$ can be obtained from $\omega$ by only permuting elements within a block $Y_{\ell}$ or $Z_{\ell}$. This exactly means that $\mu_{\mathbf{p}}\big(B_{\ii}((1-\varepsilon)R)\big)$ can be calculated using $\omega$.
\end{proof}

We next define the scales where we subdivide $[r,R]$. Let
\begin{equation*}
	R_1\coloneqq \inf\{ r'>r:\, \sigma_{\ii}(r')=\sigma_{\ii}(R) \}
\end{equation*}
and terminate if $\sigma_{\ii}((1-\varepsilon)R_{1})=\sigma_{\ii}(r)$, otherwise, for $k\geq 2$ until $\sigma_{\ii}((1-\varepsilon)R_{k})=\sigma_{\ii}(r)$ define
\begin{equation*}
	R_k\coloneqq \inf\{ r'>r:\, \sigma_{\ii}(r') = \sigma_{\ii}((1-\varepsilon)R_{k-1}) \}
\end{equation*}
concluding with $R_M$, where $\varepsilon=\varepsilon(r)>0$ is chosen so small that $1-\varepsilon>\max\{r/R_M, \max_{n,i} \lambda_i^{(n)}\}$. It follows from the construction that $\sigma_{\ii}((1-\varepsilon)R_{k})$ is always different from the previous orderings, hence, $M\leq d!$.

We are ready to conclude the proof. We suppress multiplicative constants $c$ depending only on $F$ by writing $X\lesssim Y$ if $X\leq cY$. First using Lemma~\ref{lem:40} and then Lemma~\ref{lem:ratioOfMeasures}, we get the upper bound
\begin{align*}
	\frac{\mu_{\mathbf{p}}(B_{\ii}(R))}{\mu_{\mathbf{p}}\big(B_{\ii}(r)\big)} &\lesssim  \frac{\mu_{\mathbf{p}}(B_{\ii}(R))}{\mu_{\mathbf{p}}\big(B_{\ii}(R_1)\big)} \cdot
	\prod_{k=2}^M \frac{\mu_{\mathbf{p}}(B_{\ii}((1-\varepsilon)R_{k-1}))}{\mu_{\mathbf{p}}\big(B_{\ii}(R_k)\big)} \cdot
	\frac{\mu_{\mathbf{p}}(B_{\ii}((1-\varepsilon)R_M))}{\mu_{\mathbf{p}}\big(B_{\ii}(r)\big)} \\
	&\lesssim \left( \frac{R}{R_1} \right)^{ \overline{S}(\mathbf{p},\sigma_{\ii}(R_1)) } 
	\prod_{k=2}^M \left( \frac{(1-\varepsilon)R_{k-1}}{R_k} \right)^{ \overline{S}(\mathbf{p},\sigma_{\ii}(R_k)) } 
	\left( \frac{(1-\varepsilon)R_M}{r} \right)^{ \overline{S}(\mathbf{p},\sigma_{\ii}(r)) } \\
	&\lesssim \left( \frac{R}{r} \right)^{ \max_{\sigma\in\mathcal{A}} \overline{S}(\mathbf{p},\sigma) }.
\end{align*}
The lower bound is very similar. Lemma~\ref{lem:40} is not necessary because $\mu_{\mathbf{p}}(B_{\ii}(R)) \geq \mu_{\mathbf{p}}\big(B_{\ii}((1-\varepsilon)R)\big)$ holds for any $R>0$ and one uses $\min_{\sigma\in\mathcal{A}} \underline{S}(\mathbf{p},\sigma)$ instead in the last step. The proof of Proposition~\ref{prop:ratioOfMeasures} is complete.

\section{Proof of Theorem~\ref{thm:dimMeasure}}\label{sec:proofofmain}

\subsection{Transferring symbolic estimates to geometric estimates}

The very strong SPPC implies that there exists $\delta_0>0$ depending only on the sponge $F$ such that for every $\sigma\in\mathcal{A}$, $1\leq n\leq d$ and $i,j\in\mathcal{I}$ for which $f_i$ and $f_j$ do \emph{not} overlap exactly on $E_n^{\sigma}$,
\begin{equation*}
\mathrm{dist}\big( \Pi_n^{\sigma}\big(f_i([0,1]^d)\big), \Pi_n^{\sigma}\big(f_j([0,1]^d)\big) \big) \geq \delta_0.
\end{equation*}
The next lemma allows us to replace a Euclidean ball $B(x,r)$ with the image of an approximate cube of roughly the same diameter under the natural projection  $\pi$, recall~\eqref{eq:14}. It is an adaptation of~\cite[Proposition~6.2.1]{Olsen_PJM98}. The short proof is included for completeness.
\begin{lma}\label{lem:30}
Assume the sponge $F$ satisfies the SPPC. For all $\ii\in\Sigma$ and $r>0$,
\begin{equation*}
\pi(B_{\ii}(r)) \subseteq B(\pi(\ii),\sqrt{d}\cdot r).
\end{equation*}
Moreover, if $F$ satisfies the very strong SPPC, then
\begin{equation*}
B(\pi(\ii), \delta_0 \cdot r)\cap F \subseteq \pi(B_{\ii}(r)).
\end{equation*}
\end{lma}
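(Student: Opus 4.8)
The plan is to prove the two inclusions separately, both relying on the product structure of the approximate cube and the fact that each coordinate projection is either an exact overlap or separated.

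\medskip

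\emph{The first inclusion.} I would unpack the definition \eqref{eq:20} of $B_{\ii}(r)$ coordinate by coordinate. For $\jj\in B_{\ii}(r)$ and each $1\leq n\leq d$, the condition $|\Pi_n^{\sigma}\jj\wedge\Pi_n^{\sigma}\ii|\geq L_{\ii}(r,\sigma_n)$ forces $\pi(\jj)$ and $\pi(\ii)$ to agree in the $\sigma_n$-coordinate up to the cylinder of depth $L_{\ii}(r,\sigma_n)$. By the definition \eqref{eq:21} of the $r$-stopping, the $\sigma_n$-th side of this cylinder has length at most $r$, so the $\sigma_n$-coordinates of $\pi(\jj)$ and $\pi(\ii)$ differ by at most $r$. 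Doing this for all $d$ coordinates and applying Pythagoras gives $|\pi(\jj)-\pi(\ii)|\leq\sqrt{d}\cdot r$, which is exactly $\pi(\jj)\in B(\pi(\ii),\sqrt{d}\cdot r)$. This direction only needs the cuboid containment remark already stated after \eqref{eq:20} and is essentially bookkeeping.

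\medskip

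\emph{The second inclusion.} This is the substantive direction and is where the very strong SPPC and the uniform gap $\delta_0$ enter. I would argue by contrapositive: take $\jj\in\Sigma$ with $\pi(\jj)\in F$ and suppose $\pi(\jj)\notin\pi(B_{\ii}(r))$; the goal is to show $|\pi(\jj)-\pi(\ii)|\geq\delta_0\cdot r$. If $\jj\notin B_{\ii}(r)$ then by \eqref{eq:20} there is some coordinate $n$ with $|\Pi_n^{\sigma}\jj\wedge\Pi_n^{\sigma}\ii|<L_{\ii}(r,\sigma_n)$; choosing the largest such $n$ (equivalently the first place the projected words disagree), there is a level $\ell\leq L_{\ii}(r,\sigma_n)$ at which $\Pi_n^{\sigma}i_\ell\neq\Pi_n^{\sigma}j_\ell$, meaning $f_{i_\ell}$ and $f_{j_\ell}$ do \emph{not} overlap exactly on $E_n^{\sigma}$, while all earlier symbols project identically. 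The very strong SPPC, via the uniform separation displayed just before the lemma, then guarantees that the images $\Pi_n^{\sigma}(f_{\ii|\ell-1}f_{i_\ell}([0,1]^d))$ and $\Pi_n^{\sigma}(f_{\ii|\ell-1}f_{j_\ell}([0,1]^d))$ are separated by at least $\delta_0$ scaled down by the contraction of the common prefix in the $\sigma_n$-coordinate, namely by $\prod_{m=1}^{\ell-1}\lambda_{i_m}^{(\sigma_n)}\geq\prod_{m=1}^{L_{\ii}(r,\sigma_n)-1}\lambda_{i_m}^{(\sigma_n)}>r$, using \eqref{eq:21} again. Hence the $\sigma_n$-coordinates of $\pi(\ii)$ and $\pi(\jj)$ differ by at least $\delta_0\cdot r$, giving $|\pi(\jj)-\pi(\ii)|\geq\delta_0\cdot r$ as required.

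\medskip

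The main obstacle is the careful handling of the prefix in the second inclusion: one must verify that \emph{all} symbols before the first point of disagreement project to the same image under $\Pi_n^{\sigma}$, so that the two cylinders $f_{\ii|\ell-1}f_{i_\ell}([0,1]^d)$ and $f_{\ii|\ell-1}f_{j_\ell}([0,1]^d)$ sit inside a common parent cylinder on which the separation estimate can be applied with the correct scaling factor. The observation recorded earlier in the excerpt---that overlapping exactly on $E_n^{\sigma}$ implies overlapping exactly on all lower $E_m^{\sigma}$---is what lets me reduce to a single coordinate $n$ and invoke the one-dimensional gap $\delta_0$ cleanly. Everything else is routine estimation with \eqref{eq:21}.
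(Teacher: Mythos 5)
Your first inclusion is exactly the paper's argument and is fine, and your selection of the disagreeing coordinate $n$ and the first disagreement index $\ell$ (so that all earlier symbols agree under $\Pi_n^{\sigma}$) matches the paper's choice and works. However, the key estimate in your second inclusion contains a step that does not hold as written. The separation supplied by the very strong SPPC is a Euclidean distance in the $n$-dimensional subspace $E_n^{\sigma}$, namely $\mathrm{dist}\big(\Pi_n^{\sigma}(f_{i_\ell}([0,1]^d)),\Pi_n^{\sigma}(f_{j_\ell}([0,1]^d))\big)\geq\delta_0$; it is \emph{not} a gap in the single $\sigma_n$-coordinate. Two axis-aligned boxes in $E_n^{\sigma}$ at distance $\delta_0$ can be offset diagonally, so no individual coordinate need exhibit a gap of size $\delta_0$, and whichever coordinate does carry a gap need not be $\sigma_n$. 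This matters when you rescale by the common prefix $f_{\ii|\ell-1}$: on $E_n^{\sigma}$ this map is diagonal with $n$ \emph{different} contraction factors $\prod_{k<\ell}\lambda_{i_k}^{(\sigma_m)}$, $m=1,\ldots,n$, so all one can conclude is that the two projected depth-$\ell$ cylinders are at distance at least $\delta_0\cdot\min_{1\leq m\leq n}\prod_{k<\ell}\lambda_{i_k}^{(\sigma_m)}$, not $\delta_0\cdot\prod_{k<\ell}\lambda_{i_k}^{(\sigma_n)}$. Your bound $\prod_{k<\ell}\lambda_{i_k}^{(\sigma_n)}>r$ controls only the $m=n$ factor, so the stated conclusion that ``the $\sigma_n$-coordinates of $\pi(\ii)$ and $\pi(\jj)$ differ by at least $\delta_0\cdot r$'' is unjustified, and with it the final inequality.

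The gap is local and fixable, and the repair is what the paper actually does: stay with distances in $E_n^{\sigma}$ and use the $\sigma$-ordering of the approximate cube. Since $B_{\ii}(r)$ is a $\sigma$-ordered cube, $\ell-1<L_{\ii}(r,\sigma_n)\leq L_{\ii}(r,\sigma_m)$ for every $m\leq n$, so \eqref{eq:21} gives $\prod_{k<\ell}\lambda_{i_k}^{(\sigma_m)}\geq\prod_{k=1}^{L_{\ii}(r,\sigma_m)-1}\lambda_{i_k}^{(\sigma_m)}>r$ for \emph{every} coordinate of $E_n^{\sigma}$, not just $\sigma_n$. Hence the minimum contraction factor exceeds $r$, the two projected cylinders are at distance greater than $\delta_0 r$ in $E_n^{\sigma}$, and since $\Pi_n^{\sigma}$ never increases distances, $|\pi(\ii)-\pi(\jj)|>\delta_0 r$ follows with no claim about any single coordinate. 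Note that this is precisely where the cube ordering enters the proof; your write-up never invokes it, which is the symptom of the missing step.
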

\begin{proof}
By definition~\eqref{eq:20}, the image $\pi(B_{\ii}(r))$ is contained inside a cuboid of side length $\prod_{\ell=1}^{L_{\ii}(r,n)} \lambda_{i_\ell}^{(n)} \leq r$ in each coordinate $n$. In worst case, $\pi(\ii)$ is a corner of this cuboid which is then certainly contained in $B(\pi(\ii),\sqrt{d}\cdot r)$.

We show that $\jj\notin B_{\ii}(r)$ implies $\pi(\jj)\notin B(\pi(\ii), \delta_0 \cdot r)$ under the very strong SPPC. Assume $B_{\ii}(r)$ is $\sigma$-ordered. Since $\jj\notin B_{\ii}(r)$, there is a largest $n'\in\{1,\ldots,d\}$ and a smallest $\ell'\in\{L_{\ii}(r,\sigma_{n'+1})+1,\ldots,L_{\ii}(r,\sigma_{n'})\}$ such that $\Pi_{n'}^{\sigma}j_{\ell'}\neq \Pi_{n'}^{\sigma}i_{\ell'}$. For this particular choice, the very strong SPPC implies that
\begin{equation*}
\mathrm{dist}\big( \Pi_{n'}^{\sigma}\big(f_{i_{\ell'}}([0,1]^d)\big), \Pi_{n'}^{\sigma}\big(f_{j_{\ell'}}([0,1]^d)\big) \big) \geq \delta_0.
\end{equation*}
Since the projection $\Pi_{n'}^{\sigma}$ can only decrease distance and $\Pi_{n'}^{\sigma}j_{\ell}= \Pi_{n'}^{\sigma}i_{\ell}$ for all $\ell<\ell'$, we can bound
\begin{align*}
\mathrm{dist}(\pi(\ii),\pi(\jj)) &\geq \mathrm{dist}\big(\Pi_{n'}^{\sigma}(\pi(\ii)),\Pi_{n'}^{\sigma}(\pi(\jj))\big) \\
&\geq \mathrm{dist}\big(\Pi_{n'}^{\sigma}\big(f_{i_1\ldots i_{\ell'-1}i_{\ell'}}([0,1]^d)\big), \Pi_{n'}^{\sigma}\big(f_{j_1\ldots j_{\ell'-1}j_{\ell'}}([0,1]^d)\big)\big) \\
&\geq \delta_0\cdot \prod_{\ell=1}^{\ell'-1} \lambda_{i_\ell}^{(\sigma_{n'})} \\
&\stackrel{\eqref{eq:21}}{>} \delta_0\cdot r,
\end{align*}
completing the proof.
\end{proof}
The very strong  SPPC also implies that the natural projection $\pi$ is injective and that $\mu_{\mathbf{p}}(B_{\ii}(r))=\nu_{\mathbf{p}}(\pi(B_{\ii}(r)))$ for any approximate cube $B_{\ii}(r)$. This, together with Lemma~\ref{lem:30}, implies that for any $\ii\in\Sigma$ and $0<r<R\leq 1$,
\begin{equation*}
\frac{\mu_{\mathbf{p}}\big(B_{\ii}(R/\sqrt{d})\big)}{\mu_{\mathbf{p}}\big(B_{\ii}(r/\delta_0)\big)}
\leq \frac{\nu_{\mathbf{p}}(B(\pi(\ii),R))}{\nu_{\mathbf{p}}(B(\pi(\ii),r))} \leq 
\frac{\mu_{\mathbf{p}}\big(B_{\ii}(R/\delta_0)\big)}{\mu_{\mathbf{p}}\big(B_{\ii}(r/\sqrt{d})\big)}.
\end{equation*}
Hence, it is enough to consider the ratio $\mu_{\mathbf{p}}(B_{\ii}(R))/\mu_{\mathbf{p}}(B_{\ii}(r))$.

The upper bound for $\dim_{\mathrm{A}}\nu_{\mathbf{p}}$ and the lower bound for $\dim_{\mathrm{L}}\nu_{\mathbf{p}}$ now directly follow from Proposition~\ref{prop:ratioOfMeasures}.  The lower bound for $\dim_{\mathrm{A}}\nu_{\mathbf{p}}$  requires more work and we give the argument in the next subsection.  The argument to bound $\dim_{\mathrm{L}}\nu_{\mathbf{p}}$ from above is analogous and we omit the details.

\subsection{Lower bound for Assouad dimension}

Recall notation from~\eqref{eq:25} and~\eqref{eq:26}.  Using Lemma \ref{lem:30}, in order to bound $\dim_{\mathrm{A}}\nu_{\mathbf{p}}$ from below, it suffices to prove the following proposition.
\begin{prop}
For all $\sigma \in \mathcal{B}$, there exists  a sequence of triples $(R, r, \ii) \in (0,1) \times (0,1) \times \Sigma$ with     $R/r\to\infty$ such that 
\begin{equation*}
\frac{\mu_{\mathbf{p}}(B_{\ii}(R))}{\mu_{\mathbf{p}}(B_{\ii}(r))} \geq c \left( \frac{R}{r} \right)^{ \overline{S}(\mathbf{p},\sigma) }
\end{equation*}
for some constant $c>0$ uniformly for all triples in the sequence.
\end{prop}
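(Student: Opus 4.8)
The plan is to exhibit, for each fixed $\sigma\in\mathcal{B}$, a single word $\ii$ together with scales $R>r$ for which both $B_{\ii}(R)$ and $B_{\ii}(r)$ are $\sigma$-ordered and the product formula of Lemma~\ref{lem:measureApproxCube} saturates the per-coordinate maxima defining $\overline{S}(\mathbf{p},\sigma)$. Since both cubes share the ordering $\sigma$, the ratio telescopes exactly as in the proof of Lemma~\ref{lem:ratioOfMeasures} into $\prod_{n=1}^{d}\prod_{\ell=L_{\ii}(R,\sigma_n)+1}^{L_{\ii}(r,\sigma_n)} \big(P_{n-1}^{\sigma}(\Pi_n^{\sigma} i_\ell)\big)^{-1}$, and the upper bound there arose by replacing each factor with its extremal value $\overline{s}_n^{\sigma}$ from~\eqref{eq:25}--\eqref{eq:26}. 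To make the lower bound match, I would force $\Pi_n^{\sigma} i_\ell=\overline{k}_n^{\sigma}$ on the whole refinement window $\{L_{\ii}(R,\sigma_n)+1,\dots,L_{\ii}(r,\sigma_n)\}$ of the $n$-th coordinate, fixing once and for all a preimage $\hat{k}_n\in\mathcal{I}$ with $\Pi_n^{\sigma}\hat{k}_n=\overline{k}_n^{\sigma}$ (which exists by surjectivity of $\Pi_n^{\sigma}$).

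The construction I would use chooses the scales in a staircase so that the windows decouple: I arrange $L_{\ii}(r,\sigma_n)=L_{\ii}(R,\sigma_{n-1})$ for $2\le n\le d$, so that the $d$ windows are consecutive, pairwise disjoint blocks of positions, block $n$ lying strictly deeper than block $n-1$. A position in the $n$-th window then lies below $L_{\ii}(R,\sigma_m)$ for every $m<n$ and above $L_{\ii}(r,\sigma_m)$ for every $m>n$, so in the telescoped ratio it contributes to the $n$-th coordinate only; filling that block with $\hat{k}_n$ makes the window contribute exactly $\big(P_{n-1}^{\sigma}(\overline{k}_n^{\sigma})\big)^{-(L_{\ii}(r,\sigma_n)-L_{\ii}(R,\sigma_n))}$. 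Since $P_{n-1}^{\sigma}(\overline{k}_n^{\sigma})=\big(\lambda_{\overline{k}_n^{\sigma}}^{(\sigma_n)}\big)^{\overline{s}_n^{\sigma}}$ by~\eqref{eq:26}, and the product of the $\lambda^{(\sigma_n)}$ over the block is comparable to $r/R$ by the defining inequalities~\eqref{eq:21} of the stopping times (exactly as in~\eqref{eq:27}), the $n$-th window contributes $\asymp (R/r)^{\overline{s}_n^{\sigma}}$; multiplying over $n$ gives $\asymp (R/r)^{\overline{S}(\mathbf{p},\sigma)}$, with all suppressed constants depending only on $F$. Prepending a long \emph{pedestal} word, sitting at positions below every window and hence cancelling between numerator and denominator, leaves the ratio untouched; letting the common block length tend to infinity drives $R/r\to\infty$ and produces the required sequence of triples.

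The real work, and the place where membership in $\mathcal{B}$ rather than $\mathcal{A}$ is indispensable, is to realise this staircase: I must choose the pedestal so that the stopping times $L_{\ii}(R,\sigma_n)$ fall exactly at the block boundaries while both cubes stay $\sigma$-ordered, even though each homogeneous block on its own would tilt the coordinatewise contraction rates out of order. The block lengths are pinned, up to bounded error, by the requirement that each block contract by a factor $\asymp r/R$ in its own coordinate; feeding these into the stopping-time equations $\prod_{\ell\le L_{\ii}(R,\sigma_n)}\lambda_{i_\ell}^{(\sigma_n)}\asymp R$ constrains the pedestal's empirical distribution $\mathbf{q}$ to have prescribed Lyapunov exponents in the strict order $\chi_{\sigma_1}(\mathbf{q})<\chi_{\sigma_2}(\mathbf{q})<\dots<\chi_{\sigma_d}(\mathbf{q})$. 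Such a $\mathbf{q}$ exists precisely because $\sigma\in\mathcal{B}$, by the equivalent characterisation in Lemma~\ref{lem:EquivCharacB}, and moreover the set of admissible $\mathbf{q}$ is open in $\mathcal{P}_{\mathcal{I}}$; this openness, together with the fact that the target relations need only hold up to the bounded multiplicative slack already absorbed into the constant $c$, is what lets me solve the system asymptotically as $R/r\to\infty$. I expect verifying this simultaneous solvability — matching the $d$ stopping-time constraints against an order-respecting pedestal frequency vector, and controlling the $O(1)$ discrepancies at the window boundaries in the spirit of Lemma~\ref{lem:40} — to be the main technical obstacle, whereas the measure computation itself is a direct application of Lemma~\ref{lem:measureApproxCube}.
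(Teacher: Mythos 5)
Your overall architecture---windows filled with the maximisers from \eqref{eq:25}, a filler word whose contribution cancels, and the computation of the ratio via Lemma~\ref{lem:measureApproxCube}---is the same as the paper's, and the measure computation itself is sound: once the windows $(L_{\ii}(R,\sigma_n),L_{\ii}(r,\sigma_n)]$ are pairwise disjoint, window $n$ contributes $\asymp (R/r)^{\overline{s}_n^{\sigma}}$ exactly as you say. The genuine gap is the feasibility of your staircase. You insist on \emph{consecutive} windows, $L_{\ii}(r,\sigma_n)=L_{\ii}(R,\sigma_{n-1})$, and this is not merely the hard step you defer; for $d\geq 3$ it is unsatisfiable in general. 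Indeed, with $m_n$ the length of window $n$ (pinned by $(\lambda_{\hat k_n}^{(\sigma_n)})^{m_n}\asymp r/R$) and $M$ the pedestal length, your stopping-time constraints force, up to $O(1/M)$ errors,
\begin{equation*}
\chi_{\sigma_n}(\mathbf{q})\;=\;\frac{\log(1/R)}{M}\;-\;\frac{\log(R/r)}{M}\,c_n,
\qquad\text{where }\; c_n\coloneqq\sum_{m>n}\frac{\log \lambda_{\hat k_m}^{(\sigma_n)}}{\log \lambda_{\hat k_m}^{(\sigma_m)}}
\end{equation*}
are constants determined by the maximisers. Your only free parameters are $M$, $\log(1/R)$ and $\log(R/r)$, so you need the achievable set of Lyapunov vectors to contain a point satisfying the $d-2$ rigid proportionality relations $\big(\chi_{\sigma_d}(\mathbf{q})-\chi_{\sigma_n}(\mathbf{q})\big):\big(\chi_{\sigma_d}(\mathbf{q})-\chi_{\sigma_m}(\mathbf{q})\big)=c_n:c_m$. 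Membership $\sigma\in\mathcal{B}$ (Lemma~\ref{lem:EquivCharacB}) only guarantees \emph{some} strictly ordered achievable vector, and openness of the ordering condition cannot produce this codimension-$(d-2)$ coincidence. Concretely, take a Bedford--McMullen sponge in $d=3$ with $\lambda_i^{(n)}=3^{-n}$ for all $i$ and translations giving the very strong SPPC: every letter has identical ratios, so the pedestal affords no freedom at all and $L_{\ii}(\rho,n)\approx\log_3(1/\rho)/n$ for every $\ii$; consecutiveness then demands $\log_3(1/r)=\tfrac{3}{2}\log_3(1/R)$ (coordinates $3,2$) and $\log_3(1/r)=2\log_3(1/R)$ (coordinates $2,1$) simultaneously. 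This fails by an amount growing linearly in $\log(1/R)$, far beyond the bounded multiplicative slack you appeal to in the spirit of Lemma~\ref{lem:40}.

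The repair is to demand only the interleaving \emph{inequalities} $L_{\ii}(r,\sigma_n)<L_{\ii}(R,\sigma_{n-1})$, i.e.\ to allow gaps between windows: a position lying in no window lies in no product range $(L_{\ii}(R,\sigma_m),L_{\ii}(r,\sigma_m)]$, so it cancels from the telescoped ratio no matter which letter occupies it, and the gap lengths restore the missing degrees of freedom. This is exactly the paper's proof. It passes to an iterate of the IFS to extract from $\sigma\in\mathcal{B}$ a single letter $j$ with $\lambda_j^{(\sigma_d)}<\cdots<\lambda_j^{(\sigma_1)}$ (this, rather than a pedestal frequency vector, is how membership in $\mathcal{B}$ enters), fills the initial segment and every gap with $j$, and proves the interleaving \eqref{keyordering} by backwards induction on $n$; that induction is precisely what dictates the delicate admissible range \eqref{keychoice} for $r$, in which $R/r\to\infty$ but only slowly relative to $1/R$. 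So the step you label ``the main technical obstacle'' is not a routine verification of your construction---your construction cannot be carried out---but a genuinely different, inequality-based one.
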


\begin{proof}
Fix $\sigma \in \mathcal{B}$.  By definition of $\mathcal{B}$, there exist  $\jj \in\Sigma$  and  $\rho>0$  such that  $\jj$ determines a strictly $\sigma$-ordered cylinder at scale $\rho$, see \eqref{eq:201}.  By passing to a finite iterate of the original IFS if necessary, we may assume that $L_{\jj}(\rho,\sigma_d) = 1$.  Therefore, there exists $j \in \mathcal{I}$ with
\[
 \lambda_{j}^{(\sigma_d)} <  \lambda_{j}^{(\sigma_{d-1})} < \ldots <   \lambda_{j}^{(\sigma_1)}.
\]
We use this $j$  together with $\overline{k}^\sigma_n$ ($n=1, \dots, d)$, see \eqref{eq:25}, to build  the $\ii\in\Sigma$ from the statement of the proposition.  We now construct the sequence of triples $(R,r,\ii)$.  This is done by first choosing a decreasing sequence of $R$ tending to 0 with the first term sufficiently small.  For a particular $R$ the associated $r$ and $\ii$ are built as follows.  First, for $a,b \in \{1, \dots, d\}$ and $v \in \mathcal{I}$, write 
\begin{equation} \label{thetanot}
\theta_{b}^{a}(v)  = \frac{\log \lambda_v^{(a)}}{\log \lambda_v^{(b)}} >0.
\end{equation}
Observe that
\begin{equation} \label{goodoldj}
\theta_{\sigma_{n}}^{\sigma_{n-1}}(j) < 1
\end{equation}
for all $n=2, \dots, d$. Choose $r$ to satisfy
\begin{equation} \label{keychoice}
\max_{v \in \mathcal{I}, n=2, \dots, d} \frac{R^{1+\frac{1-\theta_{\sigma_{n}}^{\sigma_{n-1}}(j)}{\sum_{\ell=d}^n\theta_{\sigma_{\ell}}^{\sigma_{n-1}}(v) }}}{\lambda_{\min}^{1+\frac{\theta_{\sigma_{n}}^{\sigma_{n-1}}(j)}{\sum_{\ell=d}^n\theta_{\sigma_{\ell}}^{\sigma_{n-1}}(v) }}} < r< \lambda_{\min}  R .
\end{equation}
 Choosing $r$ in the range \eqref{keychoice} is possible for sufficiently small $R$ since the bound on the left is $o(R)$ as $R \to 0$ due to \eqref{goodoldj}.  Moreover, this allows the choice to be made whilst also ensuring  $R/r\to\infty$.   Let $\ii = i_1, i_2, \dots \in \Sigma$ be such that
\[
i_{\ell}  = \overline{k}^\sigma_n
\]
for $\ell=L_\ii(R,\sigma_n)+1, \dots, L_\ii(r,\sigma_n)$ and all other entries are $j$.  Note that the upper bound from \eqref{keychoice}   immediately guarantees
\[
L_\ii(R,\sigma_n)<L_\ii(r,\sigma_n)
\]
for all $n=1, \dots, d$.  In order to show that $\ii$ is indeed well-defined, we claim that the  lower bound from \eqref{keychoice}     guarantees
\begin{equation} \label{keyordering}
L_\ii(r,\sigma_n)<L_\ii(R,\sigma_{n-1})
\end{equation}
for $n=2, \dots, d+1$, where we adopt the convention that $L_\ii(r,\sigma_{d+1})= 0$.   This takes more work. Proceeding by (backwards)  induction let  $n \in \{2, \dots, d\}$ and  assume that \eqref{keyordering} holds for $ n+1, \dots, d+1$.  The goal is to establish \eqref{keyordering} for $n$.  By definition of $L_\ii(R, \sigma_{n-1})$, see \eqref{eq:21},
\[
\lambda_{\min}  R  < \prod_{\ell=1}^{L_\ii(R,  \sigma_{n-1})} \lambda_{i_{\ell}}^{( \sigma_{n-1})} \leq R
\]
and, therefore,  \eqref{keyordering} will hold provided
\[
\Lambda:=  \prod_{\ell=1}^{L_\ii(r,  \sigma_{n})} \lambda_{i_{\ell}}^{( \sigma_{n-1})}  > R.
\]
By the inductive hypothesis and construction of $\ii$, 
\[
\Lambda = \prod_{\ell=d}^n \left(\lambda_{j}^{( \sigma_{n-1})} \right)^{L_\ii(R,\sigma_{\ell}) - L_\ii(r,\sigma_{\ell+1}) }  \left(\lambda_{\overline{k}^\sigma_{\ell}}^{ (\sigma_{n-1})} \right)^{L_\ii(r,\sigma_{\ell}) - L_\ii(R,\sigma_{\ell}) },
\]
where we have changed the use of the index  $\ell$ slightly. Invoking \eqref{thetanot} and using \eqref{eq:21}
\begin{align*}
\Lambda &= \prod_{\ell=d}^n \left(\lambda_{j}^{( \sigma_{n})} \right)^{\theta_{\sigma_{n}}^{\sigma_{n-1}}(j) \left(L_\ii(R,\sigma_{\ell}) - L_\ii(r,\sigma_{\ell+1})\right) }  \left(\lambda_{\overline{k}^\sigma_{\ell}}^{ (\sigma_{\ell})} \right)^{\theta_{\sigma_{\ell}}^{\sigma_{n-1}}(\overline{k}^\sigma_{\ell}) \left(L_\ii(r,\sigma_{\ell}) - L_\ii(R,\sigma_{\ell})\right) } \\
&= \left(\prod_{\ell=d}^n \left(\lambda_{j}^{( \sigma_{n})} \right)^{L_\ii(R,\sigma_\ell) - L_\ii(r,\sigma_{\ell+1}) }  \right)^{\theta_{\sigma_{n}}^{\sigma_{n-1}}(j)}
\prod_{\ell=d}^n  \left(
\frac{\prod_{\ell=1}^{L_\ii(r,\sigma_{\ell})} \lambda_{i_{\ell}}^{(\sigma_{n-1})}}{\prod_{\ell=1}^{ L_\ii(R,\sigma_{\ell})} \lambda_{i_{\ell}}^{(\sigma_{n-1})}}
 \right)^{\theta_{\sigma_{\ell}}^{\sigma_{n-1}}(\overline{k}^\sigma_{\ell}) } \\
&\geq \left(\prod_{\ell=1}^{L_\ii(R,\sigma_n)} \lambda_{i_{\ell}}^{(\sigma_n)} \right)^{\theta_{\sigma_{n}}^{\sigma_{n-1}}(j)}\prod_{\ell=d}^n  \left(\frac{\lambda_{\min}r}{R} \right)^{\theta_{\sigma_{\ell}}^{\sigma_{n-1}}(\overline{k}^\sigma_{\ell}) } \\
&\geq  (\lambda_{\min}R)^{\theta_{\sigma_{n}}^{\sigma_{n-1}}(j)}
  \left(\lambda_{\min} \frac{r}{R} \right)^{\sum_{\ell=d}^n\theta_{\sigma_{\ell}}^{\sigma_{n-1}}(\overline{k}^\sigma_{\ell}) } \\
&>R
\end{align*}
by \eqref{keychoice}, which proves \eqref{keyordering}.   With  the sequence now in place, the result follows easily.  For all triples $(R,r,\ii)$  in the sequence,   Lemma~\ref{lem:measureApproxCube}  gives
\begin{align*}
	\frac{\mu_{\mathbf{p}}(B_{\ii}(R))}{\mu_{\mathbf{p}}(B_{\ii}(r))}
	&= \prod_{n=1}^{d}\, \prod_{\ell=L_{\ii}(R,\sigma_n)+1}^{L_{\ii}(r,\sigma_n)} \frac{1}{P_{n-1}^{\sigma}( i_{\ell})} \\
	&= \prod_{n=1}^{d}\, \prod_{\ell=L_{\ii}(R,\sigma_n)+1}^{L_{\ii}(r,\sigma_n)} \big( \lambda_{  i_{\ell}}^{(\sigma_n)} \big)^{-\overline{s}_n^\sigma} \qquad \text{(by construction of $\ii$)}\\
	&= \prod_{n=1}^{d}\,  \left( \frac{\prod_{\ell=1}^{L_{\ii}(r,\sigma_n)}  \lambda_{  i_{\ell}}^{(\sigma_n)} }
{\prod_{\ell=1}^{L_{\ii}(R,\sigma_n)} \lambda_{  i_{\ell}}^{(\sigma_n)} }
 \right)^{-\overline{s}_n^\sigma} \\
	&\geq  \prod_{n=1}^{d}\,  \left( \frac{\lambda_{\min} R}{r}
 \right)^{\overline{s}_n^\sigma} \\
& = \lambda_{\min}^{ \overline{S}(\mathbf{p},\sigma)}  \left( \frac{R}{r} \right)^{ \overline{S}(\mathbf{p},\sigma) }
\end{align*}
as required. 
\end{proof}

\subsection{Final claim for $\sigma$-ordered coordinate-wise natural measures}

The claim for the $\sigma$-ordered coordinate-wise natural measure $\mathbf{q}^{\sigma}$ follows from the simple observation that 
\begin{equation*}
	Q_{n-1}^{\sigma}(\Pi_{n}^{\sigma}i)
	= \frac{q_n^{\sigma}(\Pi_n^{\sigma} i)}{q_{n-1}^{\sigma}(\Pi_{n-1}^{\sigma} i)}
	= \frac{ \prod_{m=1}^n \big( \lambda_{\Pi_m^{\sigma} i}^{(\sigma_m)} \big)^{s_{m-1}^{\sigma}(\Pi_{m-1}^{\sigma} i)} }{ \prod_{m=1}^{n-1} \big( \lambda_{\Pi_m^{\sigma} i}^{(\sigma_m)} \big)^{s_{m-1}^{\sigma}(\Pi_{m-1}^{\sigma} i)} }
	= \big( \lambda_{\Pi_n^{\sigma} i}^{(\sigma_n)} \big)^{s_{n-1}^{\sigma}(\Pi_{n-1}^{\sigma} i)}.
\end{equation*}
Hence,
\begin{equation*}
	\frac{ \log Q_{n-1}^{\sigma}(i) }{ \log \lambda_{i}^{(\sigma_n)} } = s_{n-1}^{\sigma}(\Pi_{n-1}^{\sigma}i) \;\text{ for every } i\in\mathcal{I}_n^{\sigma} \text{ and } 1\leq n\leq d,
\end{equation*}
completing the proof of the claim.

\begin{center} \textbf{Acknowledgements}
\end{center}
Both authors were  financially supported by a \textit{Leverhulme Trust Research Project Grant} (RPG-2019-034). JMF was also financially supported by  an \textit{EPSRC Standard Grant} (EP/R015104/1) and an \textit{RSE Sabbatical Research Grant} (70249).

\bibliographystyle{abbrv}
\bibliography{biblio_LGSponges}

\end{document}